\documentclass[english,11pt]{article}

\usepackage[a4paper,bottom=4cm,top=2cm,left=2.54cm,right=2.54cm]{geometry}
\usepackage[english]{babel}
\usepackage[utf8]{inputenc}
\usepackage{url}
\usepackage[usenames]{color}
\usepackage{amsmath}
\usepackage{amssymb}
\usepackage{amsthm}
\usepackage{bbm}
\usepackage{color}
\usepackage{soul}
\usepackage{graphicx}
\usepackage{subcaption}
    \captionsetup[figure]{skip=12pt}
\usepackage{yfonts}
\usepackage[backend=bibtex,maxnames=4,url=false,style=numeric,isbn=false, giveninits=true,eprint=false,sorting=nyt]{biblatex}

\newcommand{\subdiff}{\partial}
\newcommand{\defeq}{:=}
\DeclareMathOperator*{\argmin}{arg\,min}
\DeclareMathOperator*{\argmax}{arg\,max}

\DeclareMathOperator*{\supp}{supp}
\DeclareMathOperator*{\sign}{sign}
\newcommand{\norm}[1]{\|#1\|}

\newcommand{\abs}[1]{|#1|}

\DeclareMathOperator{\Ext}{Ext}
\DeclareMathOperator{\BV}{BV}
\DeclareMathOperator{\TV}{TV}
\DeclareMathOperator{\TGV}{TGV}
\DeclareMathOperator{\TVLp}{TVL^p}
\DeclareMathOperator{\divergence}{div}
\let\div\relax
\DeclareMathOperator{\div}{\divergence}

\DeclareMathOperator{\dist}{dist}
\DeclareMathOperator{\dom}{dom}

\DeclareMathOperator{\PSNR}{PSNR}
\DeclareMathOperator{\SSIM}{SSIM}
\newcommand{\weakto}{\mathrel{\rightharpoonup}}
\newcommand{\wsto}{\weakto^*}
\newcommand{\R}{\mathbb R}
\newcommand{\eps}{\varepsilon}
\newcommand{\one}{\mathbbm 1}
\newcommand{\charf}{\chi}
\newcommand{\bigO}{O}
\newcommand{\setplus}{\cup}
\let\d\relax
\newcommand{\d}{\partial}
\DeclareMathOperator{\interior}{int}

\renewcommand{\leq}{\leqslant}
\renewcommand{\geq}{\geqslant}
\renewcommand{\phi}{\varphi}

\newcommand{\U}{\mathcal U}

\newcommand{\V}{\mathcal V}
\newcommand{\reg}{\mathcal J}
\newcommand{\Mcal}{\mathcal M}
\newcommand{\Hausdorff}{\mathcal H}
\newcommand{\Der}{D}
\newcommand{\Bnsmu}{B_n^* \mu}
\newcommand{\Bnsmun}{B_n^* \mu_n}
\newcommand{\Mn}{\mathcal M_n^{\eps,C}}
\newcommand{\Lone}{L^1}
\newcommand{\umin}{\tilde u}
\newcommand{\un}{u_n}
\newcommand{\mun}{\mu_n}
\newcommand{\lambdan}{\lambda_n}
\newcommand{\muquer}{\bar \mu}
\newcommand{\nuquer}{\bar \nu}
\newcommand{\lquer}{\bar \lambda}
\newcommand{\uquer}{\bar u}
\newcommand{\uquerJ}{\uquer_\reg}
\newcommand{\mumax}{\tilde \mu}
\newcommand{\lambdamax}{\tilde \lambda}
\newcommand{\mumin}{{\bar \mu}_{min}}
\newcommand{\lmin}{{\bar \lambda}_{min}}
\newcommand{\mut}{\breve \mu}
\newcommand{\lambdat}{\breve \lambda}
\newcommand{\lhat}{\hat \lambda}
\newcommand{\muhat}{\hat \mu}
\newcommand{\phat}{\hat p}
\newcommand{\dJ}{\subdiff \reg}
\newcommand{\exphi}{\bar\phi}
\newcommand{\exB}{\bar B}
\newcommand{\Ent}{{E_n^{(t)}}}
\newcommand{\En}{{E_n}}
\newcommand{\Un}{{\omega_n}}
\newcommand{\uun}{{u|_\Un}}
\newcommand{\Radon}{{\mathfrak M}}
\newcommand{\cone}{{\mathcal K}}
\newcommand{\fs}{{\mathcal S}}

\newtheorem{theorem}{Theorem}
\newtheorem{corollary}[theorem]{Corollary}
\newtheorem{lemma}[theorem]{Lemma}
\newtheorem{proposition}[theorem]{Proposition}

\theoremstyle{definition}

\newtheorem{assumption}{Assumption}
\newtheorem*{assumption*}{Assumption}
\newtheorem{remark}[theorem]{Remark}
\newtheorem*{remark*}{Remark}
\newtheorem*{definition*}{Definition}

\pdfoutput=1
\interfootnotelinepenalty=10000

\numberwithin{equation}{section}
%\numberwithin{lemma}{section}
%\numberwithin{theorem}{section}
%\numberwithin{proposition}{section}
%\numberwithin{definition}{section}
%\numberwithin{remark}{section}
%\numberwithin{example}{section}
%\numberwithin{assumption}{section}
%\numberwithin{algorithm}{section}
%\numberwithin{corollary}{section}

\title{Convergence rates and structure of solutions of inverse problems with imperfect forward models}
%\hl{Debiasing and pointwise error estimates in inverse problems with imperfect forward models}}
\author{Martin Burger\footnote{Department Mathematik, University of Erlangen-N\"urnberg, Cauerstrasse~11, 91058 Erlangen, Germany, \mbox{martin.burger@fau.de}} \and Yury Korolev\footnote{Department of Applied Mathematics and Theoretical Physics, University of Cambridge, Wilberforce Road, Cambridge CB3 0WA, UK, \mbox{y.korolev@damtp.cam.ac.uk}} \and Julian Rasch\footnote{Institute for Analysis and Numerics, University of M\"unster, Einsteinstr.~62, 48149 M\"unster, Germany, \mbox{julian.rasch@wwu.de}}}
\date{}
%\date{Institute for Analysis and Numerics, University of M\"unster \\
%Einsteinstr.~62, 48149 M\"unster, Germany}

\begin{document}

\maketitle

\begin{abstract}
The goal of this paper is to further develop an approach to inverse problems with imperfect forward operators that is based on partially ordered spaces.
 Studying the dual problem yields useful insights into the convergence of the regularised solutions and allow us to obtain convergence rates in terms of Bregman distances -- as usual in inverse problems, under an additional assumption on the exact solution called the source condition.
 These results are obtained for general absolutely one-homogeneous functionals.
 In the special case of $\TV$-based regularisation we also study the structure of regularised solutions and prove convergence of  their level sets to those of an exact solution.
 Finally, using the developed theory, we adapt the concept of debiasing to inverse problems with imperfect operators and propose an approach to pointwise error estimation in $\TV$-based regularisation.
\end{abstract}

\textbf{Keywords: } inverse problems, imperfect forward models, total variation, extended support, Bregman distances, convergence rates, error estimation, debiasing

%%%%%%%%%%%%%%%%%%%%%%%%%%%%%%%%%%%%%%%%%%
%%%%%%%%%%%%%%%%%%%%%%%%%%%%%%%%%%%%%%%%%%
%%%%%%%%%%%%%%%%%%%%%%%%%%%%%%%%%%%%%%%%%%
\section{Introduction}
Inverse problems are typically concerned with the interpretation of indirect measurements.
 The measurable data $f$ are typically connected to the quantities of interest $u$ through some forward operator or forward model $A$ that models the data acquisition process. 
 To obtain the quantities of interest $u$ from the data $f$, we need to invert this forward model. 
 Since the inverse of $A$ is typically not continuous, the inversion is ill-posed and one needs to employ regularisation to obtain a stable approximation to $u$.
 Variational regularisation is a common approach to solving ill-posed problems and consists in minimising a weighted sum of a data fidelity term enforcing closeness to the measured data and a regularisation term enforcing some regularity of the reconstructed solution.
% The data fidelity penalises the misfit between the predicted and the measured data and typically models  the measurement noise.

In this paper we consider inverse problems in form of an ill-posed operator equation
\begin{equation}\label{Au=f}
Au=f,
\end{equation}
\noindent where $A \colon L^1(\Omega) \to L^\infty(\Omega)$ is a linear operator and $\Omega \subset \R^m$ is a bounded domain.
 We assume that there exists a non-negative solution of~\eqref{Au=f}.

 For an appropriate functional $\reg(\cdot) \colon L^1 \to \R_+ \cup \{\infty\}$ we consider non-negative $\reg$-minimising solutions, which solve the following problem:
\begin{equation}\label{J-min-sol}
\min_{u \in L^1 \colon u \geq 0} \reg(u) \quad \text{s.t. } Au=f.
\end{equation}
\noindent We assume that the feasible set in~\eqref{J-min-sol} has at least one point with a finite value of $\reg$ and denote a (possibly non-unique) solution of~\eqref{J-min-sol} by $\uquerJ$.
Throughout this paper it is assumed that the regularisation functional $\reg(\cdot)$ is convex, proper and absolutely one-homogeneous.
 
%If the regularistaion term is not quadratic, the reconstruction becomes non-linear, which is the state of the art in modern regularisation methods~\cite{Benning_Burger_modern_reg_2018}.

% If the operator that is used to predict the data is not perfect either, the inaccuracy of the predictions also affects the misfit and therefore the fidelity term needs to account for the error in the operator.

In practice the data $f$ are not known precisely and only their perturbed version $\tilde f$ is available. In this case, we cannot simply replace the constraint $A u = f$ in~\eqref{J-min-sol} with $A u = \tilde f$, since the solutions of the original problem~\eqref{Au=f} would no longer be feasible in this case.
 Therefore, we need to relax the equality in~\eqref{J-min-sol} to guarantee the feasibility of solutions of the original problem~\eqref{Au=f}.
 This is the idea of the residual method~\cite{GrasmairHalmeierScherzer2011,IvanovVasinTanana}.
 If the error in the data is bounded by some known constant $\delta$, the residual method accounts to solving the following constrained problem:
\begin{equation}\label{res_meth_1}
\min_{u \in L^1} \reg(u) \quad \text{s.t. } \norm{Au - \tilde f} \leq \delta.
\end{equation}
\noindent The fidelity function becomes in this case the characteristic function of the convex set $\{ u \colon \norm{Au - \tilde f} \leq \delta \}$. In the linear case, the residual method is equivalent to Tikhonov regularisation
\begin{equation}
\min_{u \in L^1} \norm{Au - \tilde f}^2 + \alpha \reg(u)
\end{equation}
\noindent with the regularisation parameter $\alpha  = \alpha(\tilde f, \delta)$ chosen according to Morozov's discrepancy principle~\cite{IvanovVasinTanana}.

In many practical situations not only the data contain errors, but also the forward operator, that generated the data, are not perfectly known. 
 In order to guarantee the feasibility of solutions of the original problem~\eqref{Au=f} in the constrained problem~\eqref{res_meth_1}, one needs to account for the errors in the operator in the feasible set.
 If the errors in the operator are bounded by a known constant $h$ (in the operator norm), the feasible set can be amended as follows in order to guarantee feasibility of the solutions of the original problem~\eqref{Au=f}:
\begin{equation}\label{res_meth_2}
\min_{u \in L^1} \reg(u) \quad \text{s.t. } \norm{\tilde Au - \tilde f} \leq \delta + h \norm{u},
\end{equation}
\noindent where $\tilde A$ is the noisy operator. This optimisation problem is non-convex and therefore presents considerably more computational challenges than its counterpart with the exact operator~\eqref{res_meth_1}.
 Thus, in the context of the residual method, uncertainty in the operator results in a qualitative change in the optimisation problem to be solved, which, in  general, requires using different numerical approaches from those in~\eqref{res_meth_1}.
 The reason for non-convexity is the fact that we used the operator norm to quantify the error in the operator.

An alternative approach was proposed in~\cite{Kor_IP:2014}.
 Instead of the operator norm, it uses intervals in an appropriate partial order to quantify the error in the operator.
 It assumes that, instead of only one instance of approximate data $\tilde f$ and approximate operator $\tilde A$, lower and upper bounds for them are available, i.e. $f^l, f^u \in L^\infty$ and $A^l,A^u \colon L^1 \to L^\infty$ such that
 \begin{equation}\label{bounds}
f^l \leq f \leq f^u, \quad A^l \leq A \leq A^u.
\end{equation}
 
 The first two inequalities are understood in the sense of partial order in $L^\infty$ and the last two in the sense of partial order for linear operators $L^1 \to L^\infty$ (more details on how partial order is defined for linear operators will be given in Section~\ref{Banach_lattices}).

% sequences of lower and upper bounds for the exact data, $f^l_n$ and $f^u_n$, and for the exact operator, $A^l_n$ and $A^u_n$, are available, such that for all $n$ we have that

Using the bounds~\eqref{bounds}, the residual method can be reformulated as the following  optimisation problem:
\begin{equation}\label{opt_problem_A}
\min_{u \in L^1 \colon u \geq 0} \reg(u) \quad \text{s.t. } A^l u \leq f^u, \,\, A^u u \geq f^l.
\end{equation} 
\noindent This optimisation problem is convex and has the same structure in the case of errors in the operator as in the error-free case. 
 The fidelity term in this case is the characteristic function of a convex polyhedron. 
 It can be easily verified that any solution of the original problem~\eqref{Au=f} is a feasible solution of~\eqref{opt_problem_A}.
 
 In this paper, we study the dual problem of~\eqref{opt_problem_A}, which can be written as follows
\begin{equation}\label{intro:dual}
\max_{\lambda,\mu \geq 0} \, (\mu,-\phi) \quad \text{s.t. } \lambda-B^* \mu \in \dJ(0),
\end{equation}
\noindent where $(\cdot,\cdot)$ denotes the duality pairing between $L^1$ and $L^\infty$, $\phi = \begin{pmatrix}f^u \\ - f^l \end{pmatrix}$, $B = \begin{pmatrix}A^l \\ - A^u \end{pmatrix}$, $B^*$ is the adjoint of $B$ and $\dJ(0) \subset L^\infty(\Omega)$ is the subdifferential of the regularisation functional at zero. 
 We shall see that, under certain assumptions, $\lambda$ and $\mu$ in~\eqref{intro:dual} are Lagrange multipliers corresponding to the positivity constraint and the constraints $A^l u \leq f^u$ and $A^u u \geq f^l$ in~\eqref{opt_problem_A}, respectively, and $p=\lambda-B^* \mu$ is a subgradient of the regulariser $\reg$ at the optimal solution of~\eqref{opt_problem_A}.
 
 To study the convergence of the minimisers of~\eqref{opt_problem_A} to a solution of~\eqref{Au=f}, we some notion of convergence of the bounds $f^l, f^u$ and $A^l, A^u$ to the exact data $f$ and operator $A$, respectively. 
 For this purpose, we consider sequences of lower and upper bounds $f^l_n, f^u_n$ and $A^l_n, A^u_n$ such that
\begin{eqnarray}\label{bounds_sequence}
&&f^l_n \leq f \leq f^u_n, \quad f^l_{n+1} \geq f^l_n, \quad f^u_{n+1} \leq f^u_n, \label{bounds_f} \\
&&A^l_n \leq A \leq A^u_n, \quad A^l_{n+1} \geq A^l_n, \quad A^u_{n+1} \leq A^u_n \label{bounds_A}
\end{eqnarray}
\noindent and
\begin{equation}
\norm{f^u_n-f^l_n} \to 0, \quad \norm{A^u_n-A^l_n} \to 0.
\end{equation}

With these sequences of bounds, we obtain a sequence of optimisation problems
\begin{equation}\label{opt_problem_A_n}
\min_{u \in L^1 \colon u \geq 0} \reg(u) \quad \text{s.t. } A^l_n u \leq f^u_n, \,\, A^u_n u \geq f^l_n.
\end{equation}
 
It was shown in~\cite{Kor_IP:2014} (see also Theorem~\ref{thm:convergence} in Section~\ref{conv_analysis}) that the minimisers $\un$ of ~\eqref{opt_problem_A_n} converge to $\uquerJ$ as $n \to \infty$.
 In this paper we study this convergence in more detail, ultimately aiming at obtaining convergence rates.
 
 It is well-known~\cite{Benning_Burger_general_fid_2011} that solutions of the dual problem play an important role in establishing convergence rates, therefore we study the behaviour of the dual problem as $n \to \infty$ in more detail. 
 Uncertainty in the operator results in a perturbation of the feasible set of the dual problem~\eqref{intro:dual}. 
 In order to ensure the convergence of its solutions, we would like to know that the solution of the dual problem is stable with respect to such perturbations. 
 Stability theory for optimisation problems with perturbations~\cite{Bonnans_Shapiro:1998} emphasises the role of the so-called Robinson regularity~\cite{Robinson_lin,Robinson_nonlin} in the stability of the solution under perturbations of the feasible set.
 In our particular case a condition on the interior of $\dJ(0)$ (see Assumption~\ref{ass_3}) plays a crucial role in the stability of the dual problem.
% Robinson regularity is also important for the primal problem, where it ensures the existence of Lagrange multipliers.

Establishing the stability of the dual problem allows us to relate its solutions to solutions of the dual problem in the limit case of exact data and operator, which has a very similar form to~\eqref{intro:dual}:
\begin{equation}\label{intro:dual_limit}
\max_{\lambda,\mu \geq 0} \, (\mu,-\exphi) \quad \text{s.t. } \lambda-\exB^* \mu \in \dJ(0),
\end{equation}
\noindent where $\exphi = \begin{pmatrix}f \\ - f \end{pmatrix}$ and $B = \begin{pmatrix}A \\ - A \end{pmatrix}$.
 If the original problem~\eqref{Au=f} is ill-posed, existence of such limit solutions of the dual problem~\eqref{intro:dual_limit} cannot be guaranteed, unless additional assumptions on the exact solution are made, known as the \emph{source condition}~\cite{Burger_Osher:2004}, which in our case takes the form~\eqref{s.c.}.
Under the source condition we are able to prove uniform boundedness of the Lagrange multipliers and convergence of the subgradient, which allows us to establish convergence rates (Section~\ref{sec:conv_rate}). For the symmetric Bregman distance~\cite{Kiwiel:1997} between the minimisers $\un$ of~\eqref{opt_problem_A_n} and any $\reg$-minimising solution $\uquerJ$ we obtain the following estimate
\begin{equation}
D^{symm}_\reg (\un, \uquerJ) \leq C \cdot \eps_n,
\end{equation}
\noindent where $\norm{f^u_n-f^l_n} = \bigO(\eps_n)$ and $\norm{A^u_n-A^l_n} = \bigO(\eps_n)$.
  These convergence rates coincide with those from~\cite{Benning_Burger_general_fid_2011} for problems with exact operators, providing an interface with existing theory.

We further investigate the solutions of problem~\eqref{opt_problem_A} by studying their geometric properties in the spirit of~\cite{Peyre:2017}.
 In particular, we prove Hausdorff convergence of the level sets of $\TV$-regularised solutions to those of the exact solution. 
 However, unlike the original paper~\cite{Peyre:2017}, we cannot use $\reg(\cdot) = \TV(\cdot)$, since it does not guarantee stability of the dual problem and convergence of the subgradient.
 Instead, we use the full (weighted) $\BV$ norm, choosing $\reg(u) = \TV(u) + \gamma \norm{u}_1$, $\gamma > 0$.

Our numerical experiments with deblurring demonstrate that reconstructions obtained with $\reg(\cdot) = \TV(\cdot) + \gamma\norm{\cdot}_1$, $\gamma>0$, are, indeed, piecewise-constant (if so is the ground truth), while $\reg(\cdot) = \TV(\cdot)$ misses some jumps and results in smoother reconstructions.
 This is surprising, since it contradicts the typical behaviour of $\TV$ in ROF-type models~\cite{ROF}, which is known as staircasing~\cite{Ring:2000,Jalalzai:2016}.
 The reason for this is the additional freedom provided by our constraint-based approach. 
  While in classical ROF-denoising zero is in the subgradient of $\TV$ only if the minimiser is equal to the data (which rarely happens for noisy data), the constraint-based approach allows the subgradient to be zero whenever the noise is small enough and contained within a prescribed corridor around the true data. 
  However, with $\reg(\cdot) = \TV(\cdot) + \gamma\norm{\cdot}_1$, $\gamma>0$, whenever the subgradient of $\reg$ is zero, the subgradient of $\TV$ is equal to $-\gamma$, forcing the reconstruction to be piecewise constant.

Finally, we use the developed theory to adopt the concept of two-step debiasing~\cite{Burger_Rasch_debiasing, Deladelle1, Deladelle2}, which allows to reduce the systematic bias in the reconstruction, such as loss of contrast, to our framework.
 We also propose a method of obtaining asymptotic pointwise lower and upper bounds of $\TV$-regularised solutions in areas, where the exact solution is piecewise-constant.

The paper is organised as follows.
 In Section~\ref{primal_and_dual} we introduce the primal and the dual problems for fixed bounds $f^l$, $f^u$, $A^l$, $A^u$ and study their properties.
 In Section~\ref{conv_analysis} we present the convergence analysis and establish convergence rates.
 In Section~\ref{level_sets} we study geometric properties of $\TV$-regularised solutions and prove Hausdorff convergence of the level sets.
 In Section~\ref{debiasing_and_error_est} we describe our approach to debiasing and pointwise error estimation and in Section~\ref{numerical_experiments} we present the results of our numerical experiments. The Appendices contain some results of more technical nature that we need for the proofs.

%Inaccuracies in the forward operator can be quantified using order intervals in an appropriate partial order

%%%%%%%%%%%%%%%%%%%%%%%%%%%%%%%%%%%%%%%%%%
%%%%%%%%%%%%%%%%%%%%%%%%%%%%%%%%%%%%%%%%%%
%%%%%%%%%%%%%%%%%%%%%%%%%%%%%%%%%%%%%%%%%%
\section{Primal and Dual Problems}\label{primal_and_dual}
%\hl{SUMMARY: Banach lattices, empty interior of positive cones except for $L^\infty$; 1-homogenious regularisation functionals; Robinson condition and stability}

In order to accurately formulate the primal problem~\eqref{opt_problem_A}, we briefly recall some definitions from the theory of functional spaces with partial order.

%%%%%%%%%%%%%%%%%%%%%%%%%%%%%%%%%%%%%%%%%%
\subsection{Banach lattices}\label{Banach_lattices}
$L_p$ spaces, endowed with a partial order relation
\begin{equation*}
f \leq g \text{ iff } f(\cdot) \leq g(\cdot) \text{ a.e.},
\end{equation*}
\noindent become Banach lattices, i.e. partially ordered Banach spaces with well-defined suprema and infima of each pair of elements and a monotone norm~\cite{Schaefer}.
 The set $\{f \in L^p \colon f \geq 0\}$ is called the \emph{positive cone}.
 It can be shown that the interior of the positive cone in an $L^p$ space is empty, unless $p=\infty$~\cite{Schaefer, Schaefer:1958}.

Partial orders in $L^p$ and $L^q$ induce a partial order in  a subspace of the space of linear operators acting from $L^p$ to $L^q$, namely in the space of regular operators.
 A linear operator $A \colon L^p \to L^q$ is called regular, if it can be represented as a difference of two positive operators.
 An operator $A$ is called positive and we write $A \geq 0$ iff $\forall x \in L^p, \,\, x \geq 0 \implies Ax \geq 0$.
 Partial order in the space of regular operators is introduced as follows: $A \geq B$ iff $A-B$ is a positive operator.
  Every regular operator acting between two Banach lattices is continuous~\cite{Schaefer}.

%%%%%%%%%%%%%%%%%%%%%%%%%%%%%%%%%%%%%%%
\subsection{Primal and dual problems}
%\hl{SUMMARY: Primal problem, functional spaces, bounded $\Omega$, existing results on convergence}

In this section we study in more detail the optimisation problem~\eqref{opt_problem_A} and its dual~\eqref{intro:dual}. For convenience, we repeat problem~\eqref{opt_problem_A} here:
\begin{equation*}
\min_{u \in L^1 \colon u \geq 0} \reg(u) \quad \text{s.t. } A^l u \leq f^u, \,\, A^u u \geq f^l.
\end{equation*}

In order to simplify notation, we introduce 
\begin{equation*}
B = \begin{pmatrix} A^l \\ - A^u \end{pmatrix}
\quad \text{and} \quad 
\exB = \begin{pmatrix}A \\ - A \end{pmatrix}
\end{equation*}
\noindent as well as
\begin{equation*}
\phi = \begin{pmatrix}f^u \\ - f^l \end{pmatrix} 
\quad \text{and} \quad 
\exphi = \begin{pmatrix}f \\ - f \end{pmatrix}.
\end{equation*}
\noindent Obviously, $\phi \in L^\infty \times L^\infty$, however, for the sake of compact notation, we will write $\phi \in L^\infty$, where it will cause no confusion.
 The same holds for the Lagrange multiplier $\mu \in (L^\infty)^* \times (L^\infty)^*$ corresponding to the constraint $B u \leq \phi$, to be introduced later, of which we will simply write $\mu \in (L^\infty)^*$ most of the time.

%It can be easily verified that $B_n \leq B_{n+1} \leq \dots \leq \exB$, $\phi_n \geq \phi_{n+1} \geq \dots \geq \exphi$, $\norm{\exB-B_n}_{L^1 \to L^\infty} \to 0$ and $\norm{\phi_n-\exphi}_\infty \to 0$.
 With this notation, problem~\eqref{opt_problem_A} can be written as follows
\begin{equation}\label{primal}
\min_{u \in L^1 \colon u \geq 0} \reg(u) \quad \text{s.t. } B u \leq \phi.
\end{equation}
\noindent Denote a (possibly non-unique) minimiser of~\eqref{primal} by $\umin$.

Now let us turn to the dual problem of~\eqref{primal}.
 The Lagrange function is given by the following expression
\begin{equation*}
L(u,\lambda,\mu) = \reg(u) + (\mu,B u - \phi) - (\lambda,u),
\end{equation*}
\noindent where $\lambda \in L^\infty$, $\mu \in (L^\infty)^* \times (L^\infty)^*$, $\lambda,\mu \geq 0$.
 Taking the minimum in $u$, we obtain the following expression for the dual objective:
\begin{equation*}
\min_{u \in L^1} L(u,\lambda,\mu) = -\reg^*(\lambda-B^* \mu) - (\mu,\phi),
\end{equation*}
\noindent where $\reg^*(\cdot)$ is the convex conjugate of $\reg(\cdot)$.
 Since we assumed that $\reg$ is absolutely one-homogeneous, we have that $\reg^*(\cdot)$ is the characteristic function of $\dJ(0)$.
 We discuss the properties of absolutely one-homogeneous regularisation functionals in more detail in Appendix~\ref{app:abs_one_homogeneous}. 
 Hence we obtain the following formulation of the dual problem:
\begin{equation}\label{dual}
\max_{\lambda,\mu \geq 0} \, (\mu,-\exphi) \quad \text{s.t. } \lambda-B^* \mu \in \dJ(0).
\end{equation}

%Denote by $(\lambda_n,\mu_n$) any solution of~\eqref{dual}.

We will mainly consider regularisation functionals as functionals in $L^1$ (and not, for example, in $\BV$), $\dJ(0)$ will therefore be understood as a subset of $L^\infty$ (and not $\BV^*$), with exceptions denoted by a subscript $\dJ_{\V}(0)$, where $\V$ is the corresponding subspace.
 Properties of $\dJ(0)$ for regularisation functionals $J \colon L^1 \to \R_+ \cup \{\infty\}$ of the type $\reg(\cdot) = \TV(\cdot)$ and $\reg(\cdot) = \TV(\cdot)+\gamma\norm{\cdot}_1$, $\gamma>0$, (where $\TV$ may be replaced with a similar regularisation functional, such as $\TGV$) will be discussed in Appendix~\ref{App:dJ(0)}.
 
%Note that since we defined the regularisation functional $\reg$ on $L^1$, rather than on a subspace where it is finite (e.g., $\BV$ if $\reg(u) = \TV(u)$), its subdifferential at zero $\dJ(0)$ is in $L^\infty$ (rather then $\BV^*$, for instance).

The following characterisation~\cite{Burger_Gilboa_Moeller_Eckard_spectral} of the the subdifferential of an absolutely one-homogeneous functional will be useful for us later:
\begin{equation}
\dJ(u) = \{p \in L^\infty \colon \reg(v) \geq (p,v) \,\, \forall v \in L^1, \,\, \reg(u) = (p,u)\}.
\end{equation}
In particular, for $u=0$ we get
\begin{equation}
\dJ(0) = \{p \in L^\infty \colon \reg(v) \geq (p,v) \,\, \forall v \in L^1\}.
\end{equation}
\noindent Clearly, the set $\dJ(0)$ is nonempty, convex and closed, although it may be unbounded.

%%%%%%%%%%%%%%%%%%%%%%%%%%%%%%%%%%%%%%%%%%
\subsection{Robinson regularity}
We would like to establish strong duality between~\eqref{primal} and~\eqref{dual}. To do this, we need to recall a concept from optimisation theory called \emph{Robinson regularity}.

Consider an optimisation problem
\begin{equation}\label{opt_prob}
\min_{x \in \mathcal C} f(x) \quad \text{s.t. } G(x) \in \cone,
\end{equation}
\noindent where $\mathcal C \subset X$ is a closed and convex set, $G \colon X \to Y$ is continuously Fr\'echet differentiable, $X$ and $Y$ are Banach spaces and $\cone$ is a closed convex subset of $Y$.
 We say that the Robinson regularity condition~\cite{Hinze_Pinnau_Ulbrich} is satisfied at $x_0$ in problem~\eqref{opt_prob} if
\begin{equation}\label{robinson_HPU}
0 \in  \interior(G(x_0) + G_x(x_0)(\mathcal C-x_0) - \cone).
\end{equation}

The next result~\cite[Thm. 4.2]{Bonnans_Shapiro:1998} demonstrates the role that Robinson regularity plays in the existence of the Lagrange multipliers associated with the constraint $G(x) \in \cone$.
\begin{proposition}\label{Robinson_strong_duality}
Suppose that
\begin{itemize}
\item problem~\eqref{opt_prob} is convex;
\item its optimal value is finite;
\item $G(x)$ is continuously differentiable and
\item Robinson regularity condition is satisfied in~\eqref{opt_prob}.
\end{itemize}
Then 
\begin{itemize}
\item strong duality holds between problem~\eqref{opt_prob} and its dual;
\item the set of optimal solutions of the dual problem is non-empty and bounded;
\item if the set of Lagrange multipliers is nonempty for an optimal solution of~\eqref{opt_prob}, then it is the same for any other optimal solution of~\eqref{opt_prob} and coincides with the set of optimal solutions of the dual problem.
\end{itemize}
\end{proposition}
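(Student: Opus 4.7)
The plan is to derive all three conclusions from the standard perturbation analysis of convex programs, tracing how the Robinson regularity condition at the primal optimum propagates into properties of the dual. Introduce the value function of the perturbed problem,
\begin{equation*}
V(y) \defeq \inf\{f(x) \colon x \in \mathcal C,\, G(x) - y \in \cone\},
\end{equation*}
whose biconjugate at $0$ gives the optimal dual value and whose subdifferential $\subdiff V(0)$ is, up to signs, exactly the set of Lagrange multipliers associated with the constraint $G(x) \in \cone$. In these terms, all three claims become statements about the convex function $V$ near the origin.

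First I would establish strong duality and non-emptiness of the dual optimum by showing that $V$ is finite and subdifferentiable at $0$. Finiteness at $0$ is the hypothesis on the optimal value. The decisive ingredient for subdifferentiability is that Robinson regularity forces $0$ to lie in the interior of $\dom V$: using convexity of $G$ and the fact that a convex differentiable map lies above its linearisation in the sense of the cone $\cone$, one checks that $\dom V \supseteq G(x_0) + G_x(x_0)(\mathcal C - x_0) - \cone$, which by \eqref{robinson_HPU} contains a neighborhood of $0$. Since $V$ is convex, finite near $0$ and bounded above there (by $f(x_0) + $ a small term), it is continuous and subdifferentiable at $0$. This yields $\subdiff V(0) \neq \emptyset$, i.e. strong duality together with existence of a dual optimum.

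For boundedness of the dual optimum set the key step is the quantitative form of Robinson regularity supplied by the Robinson–Ursescu open mapping theorem for closed convex multifunctions: the condition \eqref{robinson_HPU} yields constants $\eta, M > 0$ such that, for every $y$ with $\norm{y} \leq \eta$, a feasible point $x \in \mathcal C$ for the perturbed constraint exists with $\norm{x - x_0} \leq M\norm{y}$. Combining this with local Lipschitz continuity of $f$ near $x_0$ (automatic since $f$ is convex and finite on an open set) gives a local estimate $\abs{V(y) - V(0)} \leq L\norm{y}$ for $\norm{y}$ small. Such a Lipschitz bound of $V$ at $0$ translates directly into a norm bound of $\subdiff V(0)$, proving boundedness of the set of dual optimisers. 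I expect this to be the main technical obstacle, because converting the purely topological statement \eqref{robinson_HPU} into a quantitative metric-regularity estimate is where most of the analytic work resides.

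Finally, the independence of the multiplier set from the choice of primal optimum is an immediate consequence of the structure of KKT systems for convex problems once strong duality is available. The set $\subdiff V(0)$ is defined intrinsically from the perturbation data and does not reference any particular primal solution; moreover, for a convex program in which strong duality holds, $(\lambda^*, \mu^*)$ is a Lagrange multiplier at a primal optimum $x^*$ if and only if $x^*$ minimises the Lagrangian $L(\cdot, \lambda^*, \mu^*)$ and $(\lambda^*, \mu^*)$ is dual-optimal. Hence every dual-optimal $(\lambda^*, \mu^*)$ serves as Lagrange multiplier at \emph{every} primal optimum, and conversely every Lagrange multiplier associated to some primal optimum lies in $\subdiff V(0)$ and is thus dual-optimal, which is precisely the claimed equality of the two sets.
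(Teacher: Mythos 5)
First, a point of reference: the paper itself contains no proof of this proposition --- it is quoted directly from \cite[Thm.~4.2]{Bonnans_Shapiro:1998} --- and the proof given there follows exactly the perturbation/value-function strategy you outline. So your architecture (convexity of $V$, dual optima $=\subdiff V(0)$ up to sign, strong duality from subdifferentiability of $V$ at $0$, boundedness from a local Lipschitz estimate on $V$) is the right one; the problem lies in how you justify its central step.

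Your claim that convexity yields $\dom V \supseteq G(x_0) + G_x(x_0)(\mathcal C - x_0) - \cone$ is backwards. When $\cone$ is a cone and the problem is convex, $G$ lies above its linearisation in the sense that $d(x) \defeq G(x) - G(x_0) - G_x(x_0)(x-x_0) \in -\cone$; hence any $y = G(x) - k \in \dom V$ (with $x \in \mathcal C$, $k \in \cone$) can be rewritten as $y = G(x_0) + G_x(x_0)(x-x_0) - (k - d(x))$ with $k - d(x) \in \cone + \cone \subseteq \cone$, which gives the \emph{opposite} inclusion $\dom V \subseteq G(x_0) + G_x(x_0)(\mathcal C - x_0) - \cone$. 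The inclusion you need is false in general: take $X = Y = \R$, $\mathcal C = [0,1]$, $\cone = \R_-$, $G(x) = x^2 - x$, $x_0 = 0$; then the Robinson set is $[-1,\infty)$ while $\dom V = [-1/4,\infty)$, so \eqref{robinson_HPU} holds but $\dom V$ does not contain the Robinson set. (In this example $0 \in \interior \dom V$ is still true, as the theorem demands, but your argument does not establish it.) The correct bridge from \eqref{robinson_HPU} to $0 \in \interior \dom V$ is Robinson's stability theorem, i.e.\ exactly Proposition~\ref{Robinson_stability} of the paper: taking $x = x_0$ there gives $\dist(x_0,\fs(u)) \leq c\,\dist(G(x_0)+u,\cone) \leq c \norm{u} < \infty$ for all small $u$, hence $\fs(u) \neq \emptyset$, i.e.\ every small perturbation is feasible; the same estimate produces the feasible points $x_u$ with $\norm{x_u - x_0} \leq M\norm{u}$ that you use for the Lipschitz bound on $V$. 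Note also that you cannot obtain this covering property by applying Robinson--Ursescu directly to the nonlinear multifunction $x \mapsto G(x) - \cone$: that theorem requires $0$ to be interior to the range of the multifunction, which is precisely what is being proved. It must be applied to the linearised multifunction and then transferred to the nonlinear constraint, and this transfer (a fixed-point argument) is the actual content of Robinson's theorem. In short, the tool you reserve for the boundedness bullet has to carry the existence/strong-duality bullet as well; without it, that step fails.

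Two smaller points. Your assertion that $f$ is locally Lipschitz near $x_0$ because it is ``convex and finite on an open set'' is an additional hypothesis, not a consequence of the proposition's assumptions: nothing in \eqref{opt_prob} forces $f$ to be finite on an open set (in this paper's intended application $f = \reg$ is finite only on a set with empty interior in $L^1$), so continuity of the objective near the optimum has to be assumed, as it is in the standing assumptions of \cite{Bonnans_Shapiro:1998}. Your final paragraph, identifying the multiplier set at any primal optimum with the dual solution set via the saddle-point characterisation, is correct as stated.
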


Robinson regularity also plays an important role in the stability of problem~\eqref{opt_prob} under small perturbatuions in $G(\cdot)$. 
 Consider a perturbation of the form $G(x) + u \in \cone$. Denote by $\fs(u)$ the feasible set in the perturbed problem.
 The following result holds~\cite[Prop. 3.3]{Bonnans_Shapiro:1998}.
\begin{proposition}\label{Robinson_stability}
Suppose that Robinson condition~\eqref{robinson_HPU} holds in~\eqref{opt_prob} at $x_0$. Then for every $(x,u)$ in a neighbourhood of $(x_0,0)$ one has
\begin{equation*}
\dist(x,\fs(u)) = \bigO(\dist(G(x)+u,\cone)),
\end{equation*}
\noindent where $\dist(x,\fs(u)) \defeq \inf_{\xi \in \fs(u)} \norm{x-\xi}$ is the distance from $x \in X$ to the set $\fs(u) \subset X$ and $\dist(G(x)+u,\cone) \defeq \inf_{\eta \in \cone} \norm{G(x)+u-\eta}$ is the distance from $G(x)+u \in Y$ to $\cone \subset Y$.
\end{proposition}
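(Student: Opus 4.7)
The plan is to reduce the nonlinear stability statement to a quantitative covering (open mapping) property of the linearised feasibility map at $x_0$, and then transfer it to the nonlinear problem via a Lyusternik--Graves-type iteration. Writing $\fs(u) = \{\xi \in \mathcal C : G(\xi) + u \in \cone\}$, the conclusion amounts to metric regularity of the multifunction $\xi \mapsto G(\xi) - \cone$ on $\mathcal C$ at the reference pair $(x_0, -G(x_0))$, and this is the classical way to obtain it.

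The first step is to turn Robinson's condition~\eqref{robinson_HPU} into a quantitative linear openness statement. The set
\[
S \defeq G(x_0) + G_x(x_0)(\mathcal C - x_0) - \cone
\]
is convex (by convexity of $\mathcal C$ and $\cone$ together with linearity of $G_x(x_0)$) and contains $0$ in its interior, so there exists $\rho > 0$ with $\{v \in Y : \norm{v} \leq \rho\} \subset S$. A convex open-mapping/Ursescu-type argument then produces a constant $\kappa > 0$ such that for every $v \in Y$ one can find $a \in \mathcal C$ and $c \in \cone$ with
\[
G(x_0) + G_x(x_0)(a - x_0) - c = v, \qquad \norm{a - x_0} \leq \kappa \norm{v}.
\]

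The second step is a Lyusternik--Graves iteration. Given $(x,u)$ sufficiently close to $(x_0,0)$, set $\xi_0 \defeq x$ and, at step $k$, let $r_k \defeq \dist(G(\xi_k)+u, \cone)$ and pick $c_k \in \cone$ with $\norm{G(\xi_k)+u-c_k} \leq 2 r_k$. Apply the linear openness to a suitable increment $v_k$ of norm $\bigO(r_k)$ to produce $\xi_{k+1} \in \mathcal C$ with $\norm{\xi_{k+1} - \xi_k} \leq 2\kappa r_k$. Because $G$ is continuously Fréchet differentiable near $x_0$, the linearisation error $G(\xi_{k+1}) - G(\xi_k) - G_x(x_0)(\xi_{k+1} - \xi_k)$ is $o(\norm{\xi_{k+1}-\xi_k})$ uniformly, so by shrinking the initial neighbourhood one secures the contraction $r_{k+1} \leq \tfrac12 r_k$. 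Summing the resulting geometric series gives a Cauchy sequence $\xi_k \to \xi \in \fs(u)$ with
\[
\norm{\xi - x} \leq \sum_k \norm{\xi_{k+1} - \xi_k} = \bigO(r_0) = \bigO(\dist(G(x)+u, \cone)),
\]
which is the required estimate.

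The main obstacle will be calibrating the two scales $\kappa$ and the modulus of continuity of $G_x$ near $x_0$. Producing $\kappa$ in the first step is itself non-trivial because $\mathcal C - x_0$ is only a convex set, not a linear subspace, so the classical Banach open mapping theorem does not apply directly; one has to invoke the Robinson--Ursescu theorem or a Baire-category argument tailored to the closed convex set $S$. Secondly, the neighbourhood on which the nonlinear iteration actually contracts must be chosen so that at every step the linearisation error is dominated by $\rho/\kappa$, which requires balancing $\kappa$ against the modulus of continuity of $G_x$ at $x_0$ and is the place where continuous differentiability (rather than mere differentiability at $x_0$) is used.
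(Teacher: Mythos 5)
The paper does not actually prove this proposition: it is imported verbatim from~\cite[Prop.~3.3]{Bonnans_Shapiro:1998}, so there is no internal argument to compare yours against. Your sketch reconstructs what is essentially the standard proof of Robinson's stability theorem, which is also the argument behind the cited result: convert the regularity condition into linear openness of the closed convex multifunction $\xi \mapsto G(x_0)+G_x(x_0)(\xi-x_0)-\cone$ restricted to $\mathcal C$ via the Robinson--Ursescu theorem, then transfer this to the nonlinear constraint map by a Lyusternik--Graves-type iteration. So the strategy is correct and is, in effect, ``the'' proof of the statement the paper chose to quote rather than prove.

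Two points in your write-up would fail as literally stated and need repair. First, the claim that \emph{every} $v \in Y$ admits a representation $v = G(x_0)+G_x(x_0)(a-x_0)-c$ with $\norm{a-x_0}\leq \kappa\norm{v}$ is false in general: the set $S$ is convex with $0$ in its interior, but it need not be all of $Y$ (take $\mathcal C$ and $\cone$ bounded), so the openness statement can only be asserted for $v$ in a ball around the origin. This is harmless for the iteration, since the increments $v_k$ have norms $\bigO(r_k)$ and $r_k$ decays geometrically, but the statement must be localised; note also that taking $v=0$ in your claim forces $G(x_0)\in\cone$, i.e.\ feasibility of $x_0$ is silently assumed. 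Second, and more substantively, the constant $\kappa$ is produced by openness \emph{at} $(x_0,0)$, yet in the iteration you apply it at the iterates $\xi_k$, i.e.\ to affine multifunctions whose constant term is $G(\xi_k)+u$ rather than $G(x_0)$. What the argument actually requires is uniform metric regularity of the linearised convex multifunction on a neighbourhood of $(x_0,0)$ in its graph; the Robinson--Ursescu theorem does deliver this uniform version, but it is a strictly stronger statement than pointwise openness at $x_0$ and should be invoked in that form, otherwise the contraction step has no justification once $\xi_k \neq x_0$. With these two repairs your proof is the standard one and goes through.
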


This stability result will play an important role in establishing the boundedness of solutions of the dual problem in Sections~\ref{sec:stability_dual}--\ref{sec:boundedness_lambdan}.

%%%%%%%%%%%%%%%%%%%%%%%%%%%%%%%%%%%%%%%%%%
\subsection{Relationship between the primal and the dual problems}\label{primal_dual_relationship}
%\hl{SUMMARY: Strong duality, complementarity conditions, subgradient at $\un$, Robinson $\implies$ Lagrange multipliers=solutions of the dual}

Our aim in this section is to show that Robinson condition~\eqref{robinson_HPU} holds for the primal problem~\eqref{primal}.
This will ensure existence of the Lagrange multipliers and strong duality between~\eqref{primal} and~\eqref{dual}. 

In our case, the function $G$ from~\eqref{opt_prob} is linear, $G(u) = B u  - \phi$, the set $\cone$ is the non-positive cone $\cone_{\leq 0} \defeq \{\psi \in L^\infty \colon \psi \leq 0\}$ and $\mathcal C$ is the non-negative cone $\mathcal C = \{u \in L^1 \colon u \geq 0\}$.
 Since the constraint is linear, the  Robinson condition~\eqref{robinson_HPU} can be written as follows:
\begin{equation}\label{robinson}
0 \in  \interior(\{ B \mathcal C - \phi - \cone_{\leq 0} \}).
\end{equation}

To prove Robinson regularity in problem~\eqref{primal}, we need to assume that $f^l$ and $f^u$ are uniformly bounded away from the true data:
\begin{equation}\label{eps_no_n}
\abs{f^{u,l} - f} \geq \eps \one \quad \text{for some $\eps > 0$}.
\end{equation}
\noindent This assumption will be extended in Assumption~\ref{ass_1} to cover the case of sequences $f^l_n$ and $f^u_n$.

Now we can proceed with the Robinson condition.
\begin{lemma}\label{lemma:robinson_primal}
If~\eqref{eps_no_n} holds then the Robinson condition~\eqref{robinson} is fulfilled at any minimiser $\umin$ of~\eqref{primal}.
\end{lemma}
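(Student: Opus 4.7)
The plan is to exhibit a strictly feasible point for the primal problem and use it to produce an $L^\infty$-ball around $0$ inside the set $B\mathcal{C} - \phi - \cone_{\leq 0}$. Note first that since $G(u) = Bu - \phi$ is linear in $u$, the Robinson condition~\eqref{robinson_HPU} collapses to $0 \in \interior(B\mathcal{C} - \phi - \cone_{\leq 0})$, which does not depend on $\umin$ at all. So I can freely replace $\umin$ by any convenient non-negative element of $L^1$.

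The natural choice is a non-negative solution $u^*$ of $Au = f$, whose existence is guaranteed by the standing assumption stated below~\eqref{Au=f}. Using $A^l \leq A \leq A^u$ on $u^* \geq 0$ yields $A^l u^* \leq A u^* = f$ and $A^u u^* \geq A u^* = f$. Combined with~\eqref{eps_no_n}, which says $f^u \geq f + \eps \one$ and $f^l \leq f - \eps \one$, this gives
\begin{equation*}
B u^* - \phi \;=\; \begin{pmatrix} A^l u^* - f^u \\ -A^u u^* + f^l \end{pmatrix} \;\leq\; \begin{pmatrix} f - f^u \\ -f + f^l \end{pmatrix} \;\leq\; -\eps \one_2,
\end{equation*}
where $\one_2$ denotes the constant-one vector in $L^\infty \times L^\infty$. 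Thus $u^*$ is strictly feasible with uniform slack $\eps$.

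Given this slack, the interior condition is immediate. For any $v = (v_1, v_2) \in L^\infty \times L^\infty$ with $\norm{v}_\infty < \eps$, define
\begin{equation*}
c \;\defeq\; B u^* - \phi - v.
\end{equation*}
Then $c \leq -\eps \one_2 + \norm{v}_\infty \one_2 < 0$, so $c \in \cone_{\leq 0}$, while $u^* \in \mathcal{C}$. Rearranging gives $v = Bu^* - \phi - c \in B\mathcal{C} - \phi - \cone_{\leq 0}$. Hence the open ball of radius $\eps$ around $0$ in $L^\infty \times L^\infty$ lies in $B\mathcal{C} - \phi - \cone_{\leq 0}$, which is exactly~\eqref{robinson}.

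There is no real obstacle here: the argument is a Slater-type strict-feasibility observation, and the only point requiring a little care is the correct interpretation of the order $B u^* \leq \phi$ in the product lattice $L^\infty \times L^\infty$ (accounting for the sign flip in the second block of $B$ and $\phi$), which is handled by writing both constraints coordinate-wise as above.
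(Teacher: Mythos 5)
Your proof is correct and takes essentially the same route as the paper: both exhibit a non-negative solution of $Au=f$ (the paper uses $\uquerJ$) as a strictly feasible point with uniform slack $\eps$ coming from~\eqref{eps_no_n} and the operator bounds $A^l \leq A \leq A^u$, and then absorb any perturbation of $L^\infty$-norm less than $\eps$ into the cone $\cone_{\leq 0}$. Your write-up is merely more explicit about the coordinate-wise sign conventions and the point-independence of the condition for linear constraints, both of which the paper leaves implicit.
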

\begin{proof}
Fix $\eps >0$ and take an arbitrary $\omega \in L^\infty$ with $\norm{w}_\infty < \eps$.
 Our aim is to find $u \geq 0$ and $v \leq 0$ such that $\omega = B u - \phi -v$.

Choose $u = \uquerJ \geq 0$.
 Then we have that
\begin{equation*}
v = B \uquerJ - \phi - \omega \leq \exB \uquerJ - \phi - \omega = \exphi - \phi - \omega \leq -\eps \one - \omega.
\end{equation*}

We see that $v \leq 0$ holds if $\norm{\omega}_\infty$ is small enough\footnote{Note that  since the interior of the positive cone is empty in all $L^p$ spaces except for $L^\infty$, a bound on any other $L^p$ norm of $\omega$ would be insufficient.}, therefore, Robinson condition~\eqref{robinson} holds at $\umin$ in the primal problem~\eqref{primal}.
\end{proof}

Now we are ready to study the relationship between the primal problem~\eqref{primal} and its dual~\eqref{dual}.

\begin{proposition}\label{prop:complementarity}
 Under the assumptions of Lemma~\ref{lemma:robinson_primal}, strong duality between~\eqref{primal} and~\eqref{dual} holds, the complementarity conditions
\begin{equation}\label{complementarity}
\left\{
\begin{aligned}
(\mumax,B \umin - \phi) = 0, \\
(\lambdamax,\umin) = 0
\end{aligned}
\right.
\end{equation}
\noindent are satisfied and $\lambdamax - B^*\mumax \in \dJ(\umin)$, where $(\lambdamax, \mumax)$ denotes the solution of the dual problem~\eqref{dual}.
\end{proposition}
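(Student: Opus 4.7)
My plan is to apply Proposition~\ref{Robinson_strong_duality} directly to problem~\eqref{primal} and then read off~\eqref{complementarity} and the subgradient inclusion from the resulting KKT-type relations.

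First, I would verify the hypotheses of Proposition~\ref{Robinson_strong_duality}. Convexity of~\eqref{primal} is immediate: $\reg$ is convex, the constraint $Bu - \phi \in \cone_{\leq 0}$ is affine, and $\mathcal C = \{u \geq 0\}$ is convex. The map $G(u) = Bu - \phi$ is continuously Fréchet differentiable because $B$, being regular, is continuous between the Banach lattices (see Section~\ref{Banach_lattices}). Finiteness of the optimal value follows because any $\reg$-minimising solution $\uquerJ$ of~\eqref{Au=f} satisfies $A^l \uquerJ \leq A \uquerJ = f \leq f^u$ and $A^u \uquerJ \geq A \uquerJ = f \geq f^l$, hence $B \uquerJ \leq \phi$, so $\uquerJ$ is feasible and gives the upper bound $\reg(\uquerJ) < \infty$; the lower bound $0$ holds because $\reg \geq 0$ (absolute one-homogeneity forces $\reg(0)=0$ and $\reg \geq 0$). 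Robinson regularity is precisely Lemma~\ref{lemma:robinson_primal}. Hence Proposition~\ref{Robinson_strong_duality} yields strong duality and the existence of a dual optimum $(\lambdamax, \mumax) \geq 0$.

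Next I would extract the complementarity relations from strong duality. By the equality of primal and dual optimal values,
\begin{equation*}
\reg(\umin) = \min_{u \in L^1} L(u,\lambdamax,\mumax) \leq L(\umin,\lambdamax,\mumax) = \reg(\umin) + (\mumax, B\umin - \phi) - (\lambdamax, \umin).
\end{equation*}
Hence $(\mumax, B\umin - \phi) - (\lambdamax, \umin) \geq 0$. On the other hand, $\umin$ is primal feasible, so $B\umin - \phi \leq 0$ in $L^\infty$ and $\umin \geq 0$ in $L^1$; since $\mumax \in (L^\infty)^*$ is positive and $\lambdamax \in L^\infty$ is non-negative, we get $(\mumax, B\umin - \phi) \leq 0$ and $(\lambdamax, \umin) \geq 0$. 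The only way the inequality above can hold is if both pairings vanish, which is exactly~\eqref{complementarity}. Moreover, equality in the first inequality means that $\umin$ minimises $u \mapsto L(u, \lambdamax, \mumax)$ over $L^1$; taking the subdifferential at $\umin$ and invoking the sum rule gives $0 \in \dJ(\umin) + B^*\mumax - \lambdamax$, i.e.\ $\lambdamax - B^*\mumax \in \dJ(\umin)$.

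The main point requiring care is the lattice/duality interplay: $\mumax$ lies in $(L^\infty)^*$ rather than in an $L^p$ space, so one must understand its positivity as positivity of a functional on the ordered Banach lattice $L^\infty$, which is precisely what ensures $(\mumax, \psi) \leq 0$ for every $\psi \leq 0$ in $L^\infty$. Everything else is a routine unwinding of strong duality, and no further perturbation analysis is needed here since Proposition~\ref{Robinson_stability} will only be invoked later in Sections~\ref{sec:stability_dual}--\ref{sec:boundedness_lambdan}.
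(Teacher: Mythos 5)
Your proof is correct, and its skeleton coincides with the paper's: invoke Proposition~\ref{Robinson_strong_duality} via Lemma~\ref{lemma:robinson_primal} to obtain strong duality, then get~\eqref{complementarity} from the squeeze $0 \leq (\lambdamax,\umin) \leq (\mumax, B\umin - \phi) \leq 0$; your route through $\min_{u} L(u,\lambdamax,\mumax) \leq L(\umin,\lambdamax,\mumax)$ and the paper's route through the subgradient inequality for $\lambdamax - B^*\mumax \in \dJ(0)$ are the same computation in different clothing. The one genuine divergence is the final inclusion $\lambdamax - B^*\mumax \in \dJ(\umin)$: the paper computes $(\lambdamax - B^*\mumax, \umin) = (\mumax,-\phi) = \reg(\umin)$ and invokes the characterisation of subdifferentials of absolutely one-homogeneous functionals (Proposition~\ref{char_of_dJ(u)}), whereas you observe that equality in the squeeze makes $\umin$ an unconstrained minimiser of $u \mapsto L(u,\lambdamax,\mumax)$ and apply the subdifferential sum rule, which here needs no qualification condition since $B^*\mumax - \lambdamax \in L^\infty = (L^1)^*$ is merely a continuous linear functional added to $\reg$. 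Your variant is marginally more general: it makes no use of one-homogeneity in this last step, so it would survive for any proper convex regulariser once the dual is written in Lagrangian form. The paper's variant, by contrast, keeps everything expressed through the pairing identities that are reused later in the convergence analysis (e.g.\ $(p_n,\un) = (\mun,-\phi_n)$ in the proof of Theorem~\ref{thm:conv_subgrad_BV*}). Your explicit verification of the hypotheses of Proposition~\ref{Robinson_strong_duality} (in particular feasibility of $\uquerJ$, hence finiteness of the optimal value) is also slightly more careful than the paper, which states these points in passing.
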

\begin{proof}
Strong duality between the primal problem~\eqref{primal} and its dual~\eqref{dual} follows from Proposition~\ref{Robinson_strong_duality}, since the primal problem~\eqref{primal} is convex, its optimal value is bounded (by $\reg(\uquerJ)$) and Robinson condition~\eqref{robinson} is satisfied.
 Therefore, we have that
\begin{equation}
\reg(\umin) = (\mumax,-\phi).
\end{equation}

Consider the element $\lambdamax - B^*\mumax \in \dJ(0)$.
 Since $\lambdamax - B^*\mumax$ is a subgradient, we get that
\begin{equation*}
\reg(\umin) - (\lambdamax - B^*\mumax ,\umin) \geq 0
\end{equation*}
\noindent and, since $\reg(\umin) = (\mumax,-\phi)$, also that
\begin{equation*}
0 \leq (\lambdamax,\umin) \leq (\mumax,B \umin-\phi) \leq 0
\end{equation*}
\noindent (the latter inequality holds since $\mumax \geq 0 $ and $B \umin \leq \phi$).
 Therefore, the complementarity conditions~\eqref{complementarity} are satisfied.

Since $(\lambdamax - B^*\mumax,\umin) = (\mumax,-B \umin) = (\mumax, - \phi) = \reg(\umin)$ and $\lambdamax - B^*\mumax \in \dJ(0)$, we conclude that $\lambdamax - B^*\mumax \in \dJ(\umin)$ by Proposition~\ref{char_of_dJ(u)}.
\end{proof}

%Another way of obtaining the complementarity conditions~\eqref{complementarity} is by noting that in convex problems under the Robinson condition the set of Lagrange multipliers is non-empty and coincides with the set of solutions of the dual problem~\cite{Bonnans_Shapiro:1998}.

%%%%%%%%%%%%%%%%%%%%%%%%%%%%%%%%%%%%%%%%%%
%%%%%%%%%%%%%%%%%%%%%%%%%%%%%%%%%%%%%%%%%%
%%%%%%%%%%%%%%%%%%%%%%%%%%%%%%%%%%%%%%%%%%
\section{Convergence analysis}\label{conv_analysis}
In this section we turn our attention to sequences of primal and dual problems defined using sequences of bounds~\eqref{bounds_sequence}:
\begin{equation}\label{primal_n}
\min_{u \in L^1 \colon u \geq 0} \reg(u) \quad \text{s.t. } B_n u = \phi_n.
\end{equation}
\noindent and
\begin{equation}\label{dual_n}
\max_{\lambda,\mu \geq 0} \, (\mu,-\phi_n) \quad \text{s.t. } \lambda- \Bnsmu \in \dJ(0).
\end{equation}

We will be particularly interested  in the convergence of their solutions to those of the limit problems with exact data and operator (note that~\eqref{primal_limit} is just another way of writing~\eqref{J-min-sol}):
\begin{equation}\label{primal_limit}
\min_{u \in L^1 \colon u \geq 0} \reg(u) \quad \text{s.t. } \exB u = \exphi.
\end{equation}
and
\begin{equation}\label{dual_limit}
\max_{\lambda,\mu \geq 0} \, (\mu,-\exphi) \quad \text{s.t. } \lambda-\exB^* \mu \in \dJ(0).
\end{equation}

We start with the convergence of primal variables $\un$ -- solutions of~\eqref{primal_n}.

%%%%%%%%%%%%%%%%%%%%%%%%%%%%%%%%%%%%%%%%%%
\subsection{Convergence of primal solutions}
It can be easily verified that any $\reg$-minimising solution $\uquerJ$ satisfies $B_n \uquerJ \leq \phi_n$ for all $n$, which implies $\reg(\un) \leq \reg(\uquerJ)$.
 It has been shown in~\cite{Kor_IP:2014} that under standard assumptions on $\reg$ the minimisers of~\eqref{primal_n} converge to a $\reg$-minimising solution $\uquerJ$ strongly in $L^1$: 
\begin{theorem}\label{thm:convergence}
If the regulariser $\reg(\cdot) \colon L^1 \to \R_+ \cup \{\infty\}$
\begin{itemize}
\item is strongly lower-semicontinuous in $L^1$,
\item its non-empty sub-levelsets $\{ u \colon \reg(u) \leq C\}$ are strongly sequentially compact,
\end{itemize}
\noindent then there exists a minimiser $\un$ of~\eqref{primal}, $u_n \to \uquerJ$ strongly in $L^1$ (possibly, along a subsequence) and $\reg(\un) \to \reg(\uquerJ)$.
\end{theorem}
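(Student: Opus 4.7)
The plan is to apply the direct method of the calculus of variations twice: first, for a fixed $n$, to obtain existence of $\un$; second, to the sequence $(\un)$ itself to obtain the asserted convergence. The key enabling fact in both steps is that every $\reg$-minimising solution $\uquerJ$ of~\eqref{J-min-sol} is feasible for each~\eqref{primal_n}: from $\uquerJ \geq 0$ together with $A^l_n \leq A \leq A^u_n$ and $f^l_n \leq f \leq f^u_n$, one obtains $A^l_n \uquerJ \leq A\uquerJ = f \leq f^u_n$ and symmetrically $A^u_n \uquerJ \geq f^l_n$. Consequently $\reg(\un) \leq \reg(\uquerJ) < \infty$ for every $n$.

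For the existence of $\un$, I would pick a minimising sequence $(v_k)$ for~\eqref{primal_n}. Eventually $\reg(v_k) \leq \reg(\uquerJ)+1$, so $(v_k)$ lies in a non-empty sublevelset of $\reg$, which is strongly sequentially compact in $L^1$ by hypothesis. Extract a strongly convergent subsequence $v_k \to \un$. Because $A^l_n$ and $A^u_n$ are regular and thus continuous from $L^1$ to $L^\infty$, and because the cones $\{\psi \in L^\infty\colon \psi\leq f^u_n\}$ and $\{\psi \in L^\infty\colon \psi\geq f^l_n\}$ are strongly closed, the inequality constraints pass to the limit, and strong $L^1$ lower-semicontinuity of $\reg$ certifies optimality of $\un$.

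For the convergence statement, the same uniform bound $\reg(\un) \leq \reg(\uquerJ)$ confines $(\un)$ to a common sublevelset, producing, along a subsequence, a strong $L^1$ limit $u^* \geq 0$. The crux is to show that $u^*$ is feasible for $Au=f$. I would use boundedness of $\norm{\un}_1$ (the sublevelset is compact, hence bounded in $L^1$) together with the monotonicity of the operator norm for positive operators between Banach lattices, applied to $0 \leq A - A^l_n \leq A^u_n - A^l_n$, to obtain $\norm{A - A^l_n} \leq \norm{A^u_n - A^l_n} \to 0$ and similarly for $A - A^u_n$. Estimating
$$\norm{A^l_n \un - A u^*}_\infty \leq \norm{A^l_n - A}\,\norm{\un}_1 + \norm{A}\,\norm{\un - u^*}_1$$
and its analogue for $A^u_n$ then shows $A^l_n \un \to A u^*$ and $A^u_n \un \to A u^*$ strongly in $L^\infty$; passing to the limit in $A^l_n \un \leq f^u_n$ and $A^u_n \un \geq f^l_n$ with $f^{l,u}_n \to f$ yields $A u^* \leq f$ and $A u^* \geq f$, hence $Au^* = f$.

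Finally, strong $L^1$ lower-semicontinuity gives $\reg(u^*) \leq \liminf \reg(\un) \leq \reg(\uquerJ)$, and since $u^*$ is feasible for~\eqref{J-min-sol} this forces equality, so $u^*$ is itself a $\reg$-minimising solution, which we may relabel $\uquerJ$. The sandwich $\reg(\uquerJ) = \reg(u^*) \leq \liminf \reg(\un) \leq \limsup \reg(\un) \leq \reg(\uquerJ)$ then gives $\reg(\un) \to \reg(\uquerJ)$. I expect the main delicate step to be the operator-norm passage to the limit in the perturbed constraints; once the sandwich bound $\norm{A - A^l_n}\leq \norm{A^u_n - A^l_n}$ is in hand, the rest is routine direct-method bookkeeping.
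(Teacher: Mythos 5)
Your proof is correct and takes essentially the same route as the paper, which gives no self-contained argument but defers to the direct-method proof of \cite[Thm 2]{Kor_IP:2014}: feasibility of $\uquerJ$ in every perturbed problem yielding the uniform bound $\reg(\un) \leq \reg(\uquerJ)$, sequential compactness of sub-levelsets to extract a strong $L^1$ limit, passage to the limit in the order constraints, and strong lower semicontinuity to identify the limit as a $\reg$-minimising solution. Your key technical step, the lattice-norm sandwich $0 \leq A - A^l_n \leq A^u_n - A^l_n$ implying $\norm{A - A^l_n} \leq \norm{A^u_n - A^l_n} \to 0$, is the standard monotonicity of the operator norm for positive operators between Banach lattices and closes the argument without needing Assumption~\ref{ass_1}, so there is no gap.
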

\noindent The proof is similar to that in~\cite[Thm 2]{Kor_IP:2014}.

Assumptions of Theorem~\ref{thm:convergence} are satisfied, for example, for the (weighted) $\BV$- norm $\reg(u) = \TV(u) + \gamma\norm{u}_1$, $\gamma > 0$,  or its topological equivalents with $\TV$ replaced with, e.g., $\TGV$~\cite{bredies2009tgv} or $\TVLp$~\cite{Burger_TVLp_2016}.
 The term $\gamma\norm{u}_1$ can be dropped if its boundedness is implied by the condition $B_n u \leq \phi_n$ (we will see an example of this in Section~\ref{boundedness_of_u}).
 
 In order to make sure the Robinson condition is satisfied in~\eqref{primal_n} for all $n$, we need to extend the assumption that we already made in~\eqref{eps_no_n} to sequences of bounds $f^l_n$ and $f^u_n$.
 In order to have all assumptions on convergence in one place, we also include our assumptions on the convergence of the operator, which we will need later, in the following
\begin{assumption}\label{ass_1}
Suppose that there exists a sequence $\eps_n \downarrow 0$ and a constant $C_0 \geq 1$ as well as a sequence $\eta_n \downarrow 0$ and a constant $D_0 > 0$ such that
\begin{eqnarray}
&&\eps_n \one \leq \phi_n - \exphi \leq C_0 \cdot \eps_n \one, \label{conv_phin} \\
&&\norm{B-B_n}_{L^1 \to L^\infty} \leq \eta_n, \label{conv_Bn} \\
&&\limsup_{n \to \infty} \frac{\eta_n}{\eps_n} \leq D_0 \label{etan/epsn}.
\end{eqnarray}
\end{assumption}

The meaning of~\eqref{conv_phin} is that $\phi_n$ converges to $\exphi$ uniformly, but not too fast; the difference is always uniformly bounded away from zero.
 The second inequality in~\eqref{conv_phin} obviously implies that $\norm{\phi_n-\exphi}_\infty = \bigO(\eps_n)$.
 %In other words, we assume that the data are in $L^\infty$.
The meaning of~\eqref{etan/epsn} is that the data do not converge faster than the operator.
 
 %%%%%%%%%%%%%%%%%%%%%%%%%%%%%%%%%%%%%%%%%%
\subsection{Boundedness of feasible solutions of the primal problem}\label{boundedness_of_u}
In this section we will show that under some assumptions about the exact forward operator $A$ all elements of the feasible set $\{u \geq 0 \colon B_n u \leq \phi_n\}$ are uniformly bounded in $L^1$.
 Assumptions from this section will not be used in the rest of the paper, unless specifically stated, and the results of other sections will be also valid for more general forward operators.

Since all elements $u$ of the feasible set $\{u \geq 0 \colon B_n u \leq \phi_n\}$ are positive, we have that $\norm{u}_1 = (u,\one)$.
 Consider the following optimisation problem:
\begin{equation}\label{boundedness_primal}
\max_{u \geq 0} \, (u,\one) \quad \text{s.t. } B_n u \leq \phi_n.
\end{equation}

It is a linear programming problem and its dual is as follows~\cite{Anderson_Nash_LP_inf_dim}
\begin{equation}\label{boundedness_dual}
\min_{\mu \geq 0} \, (\mu,\phi_n) \quad \text{s.t. } \Bnsmu \geq \one.
\end{equation}

We make the following assumption about the exact forward operator $A$:
\begin{assumption}\label{ass_5}
Assume that the adjoint operator $A^* \colon (L^\infty)^* \to L^\infty$ satisfies the following condition:
\begin{equation*}
A^* \one \geq c \one
\end{equation*}
\noindent for some constant $c>0$.
\end{assumption}

This assumption is satisfied in many imaging inverse problems, such as deconvolution~\cite{Burger_Osher_TV_Zoo} and PET~\cite{Sawatzky:2013}. It also trivially satisfied for denoising  and inpainting.

\paragraph{The case $B_n \equiv \exB$.} In order to get some intuition, let us first consider the case $B_n \equiv \exB$.
 
\begin{theorem}
Suppose that $B_n \equiv \exB$ for all $n$ and Assumption~\ref{ass_5} is satisfied.
 Then all elements of the feasible set in~\eqref{primal_n} are uniformly bounded in $L^1$.
\end{theorem}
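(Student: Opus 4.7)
The plan is to use Assumption~\ref{ass_5} to turn the upper‐bound constraint $Au \leq f^u_n$ into an $L^1$ bound on $u$, exploiting the fact that $u\geq 0$ so that $\|u\|_1 = (u,\one)$. In the case $B_n\equiv \exB$ the constraint $B_n u\leq \phi_n$ simply says $f^l_n \leq Au \leq f^u_n$, and only the upper half is needed here.

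First I would observe that for any feasible $u$ (so $u\geq 0$ and $Au\leq f^u_n$) we can write
\begin{equation*}
c\,\|u\|_1 = c\,(u,\one) \leq (u, A^*\one) = (Au,\one) \leq (f^u_n,\one) = \|f^u_n\|_1,
\end{equation*}
where the first inequality uses $A^*\one \geq c\one$ together with $u\geq 0$, the identity in the middle is just the definition of the adjoint with respect to the $L^1$–$L^\infty$ pairing (the constant function $\one$ is identified with integration, which is well defined since $\Omega$ is bounded), and the last inequality uses $Au\leq f^u_n$ together with $\one\geq 0$. Dividing by $c>0$ gives $\|u\|_1 \leq c^{-1}\|f^u_n\|_1$. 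Since $\phi_n\to\exphi$ uniformly by Assumption~\ref{ass_1} and $\Omega$ is bounded, $\|f^u_n\|_1$ is uniformly bounded in $n$, which yields the uniform $L^1$ bound on feasible points.

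An alternative, more conceptually aligned route is via LP duality between~\eqref{boundedness_primal} and~\eqref{boundedness_dual}. For $B_n=\exB$ the dual constraint $B_n^*\mu \geq \one$ with $\mu=(\mu_1,\mu_2)\geq 0$ reads $A^*\mu_1 - A^*\mu_2 \geq \one$. Choosing $\mu_1 = c^{-1}\one$ and $\mu_2 = 0$ gives $B_n^*\mu = c^{-1}A^*\one \geq \one$ by Assumption~\ref{ass_5}, so this is a feasible dual point with objective value $(\mu,\phi_n) = c^{-1}\|f^u_n\|_1$. Weak duality then bounds the primal supremum $\sup_{u\geq 0,\,B_nu\leq\phi_n}(u,\one)$ by $c^{-1}\|f^u_n\|_1$, which is uniformly bounded as above.

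The step that requires a little care is the interpretation of $A^*\one$: since $A\colon L^1\to L^\infty$ the adjoint goes $A^*\colon (L^\infty)^* \to L^\infty$, so one has to read $\one$ on the left as the element of $(L^\infty)^*$ given by integration, which makes sense because $\Omega$ is bounded. Beyond that, the argument is essentially a one‐line weak duality estimate; no compactness or regularisation is invoked, which is why the conclusion holds uniformly in $n$ and for every feasible $u$, not merely for the minimisers $u_n$.
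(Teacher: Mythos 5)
Your proof is correct and takes essentially the same approach as the paper: the paper proves the bound by exhibiting $\mu = \frac{1}{c}(\one,0)$ as a feasible point of the dual problem~\eqref{boundedness_dual} and invoking weak duality, which is precisely your second route, while your first route is just that same weak-duality chain $c(u,\one) \leq (u,A^*\one) = (Au,\one) \leq (f^u_n,\one)$ written out explicitly at the same dual point.
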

\begin{proof}
It is easy to verify that $\mu = \frac{1}{c}(\one, 0)$ is a feasible solution of~\eqref{boundedness_dual} (here $c$ is the constant from Assumption~\ref{ass_5}).
 Indeed, we have that $\exB^* \mu = \frac{1}{c} A^* \one - 0 \geq \one$.
 By weak duality we have that problem~\eqref{boundedness_primal} is bounded and $\norm{u}_1 \leq \frac{1}{c}(f^u_n,\one) \leq C$, since $f^u_n \to f$ strongly in $L^\infty$.
\end{proof}

\paragraph{The general case.} In the general case we obtain a similar result using Assumption~\ref{ass_1}.
\begin{theorem}
Suppose that~\eqref{conv_Bn} holds and Assumption~\ref{ass_5} is satisfied.
 Then all elements of the feasible set in~\eqref{primal} are uniformly bounded in $L^1$.
\end{theorem}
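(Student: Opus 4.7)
The plan is to mimic the $B_n \equiv \exB$ case by exhibiting a feasible solution of the perturbed LP dual~\eqref{boundedness_dual} whose objective value remains uniformly bounded, then invoke weak LP duality to bound the primal objective $(u,\one)=\|u\|_1$. The natural ansatz is to take the same dual variable as in the unperturbed case, up to a constant, namely $\mu = \tfrac{2}{c}(\one,0)$, where $\one$ is identified with its image in $(L^\infty)^*$ via Lebesgue integration and $c$ is the constant from Assumption~\ref{ass_5}. Feasibility then reduces to showing that $(A_n^l)^*\one \geq \tfrac{c}{2}\one$ for $n$ sufficiently large.

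The key estimate is perturbative. From~\eqref{conv_Bn} and $\|T^*\| = \|T\|$ for bounded operators one obtains $\|(A-A_n^l)^*\|_{(L^\infty)^* \to L^\infty} \leq \eta_n$, and therefore
\begin{equation*}
\|(A-A_n^l)^*\one\|_\infty \leq \eta_n \|\one\|_{(L^\infty)^*} = \eta_n |\Omega|.
\end{equation*}
Because $A_n^l \leq A$, the operator $A-A_n^l$ is positive, so its adjoint preserves positivity and $(A-A_n^l)^*\one \geq 0$ pointwise. Combining this with Assumption~\ref{ass_5},
\begin{equation*}
(A_n^l)^* \one = A^*\one - (A - A_n^l)^*\one \geq (c - \eta_n|\Omega|)\,\one.
\end{equation*}
Since $\eta_n \to 0$, there exists $N_0$ such that $\eta_n |\Omega| \leq c/2$ for all $n \geq N_0$, and then $B_n^*\mu = \tfrac{2}{c}(A_n^l)^*\one \geq \one$, i.e.\ $\mu$ is feasible in~\eqref{boundedness_dual}.

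Weak LP duality applied to~\eqref{boundedness_primal}--\eqref{boundedness_dual} then yields, for every $u$ feasible in the primal,
\begin{equation*}
(u,\one) \leq (\mu,\phi_n) = \tfrac{2}{c}(\one, f_n^u) \leq \tfrac{2}{c}|\Omega|\,\|f_n^u\|_\infty.
\end{equation*}
Since $\phi_n \to \exphi$ uniformly by~\eqref{conv_phin}, the right-hand side is bounded independently of $n \geq N_0$; the finitely many remaining indices $n < N_0$ can be accommodated either by enlarging the constant in the definition of $\mu$ or by handling each finite problem separately. This gives the claimed uniform $L^1$ bound on the feasible set.

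The main obstacle is that the simple ansatz $\mu=\tfrac{1}{c}(\one,0)$ used in the unperturbed case need not be feasible for the perturbed adjoint $B_n^*$, since $A_n^l \leq A$ can make $(A_n^l)^*\one$ strictly smaller than $A^*\one$. Rescaling by a factor of $2$ is just enough slack to absorb the perturbation once the operator-norm bound $\|(A-A_n^l)^*\one\|_\infty = O(\eta_n)$ is established; this passage from an operator-norm estimate to a pointwise lower bound on $(A_n^l)^*\one$ is the crucial step, and it is precisely what the interior-cone property in $L^\infty$ (and the boundedness of $\Omega$) makes possible.
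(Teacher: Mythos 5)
Your proof is correct and takes essentially the same route as the paper: the same dual certificate $\mu = \tfrac{2}{c}(\one,0)$ for \eqref{boundedness_dual}, the same perturbation estimate yielding $(A^l_n)^*\one \geq (c - \eta_n\abs{\Omega})\one$ (you compare $A^l_n$ directly to $A$ via $\norm{(A-A^l_n)^*}$, while the paper routes the estimate through $(A^u_n)^*\one \geq A^*\one$ --- an immaterial difference), and weak LP duality applied to \eqref{boundedness_primal}--\eqref{boundedness_dual} to bound $(u,\one)$. The only weak point is your closing remark about the indices $n < N_0$: enlarging the constant in $\mu$ need not restore dual feasibility, since for small $n$ the bound $(A^l_n)^*\one \geq (c-\eta_n\abs{\Omega})\one$ can be vacuous and the primal feasible set can genuinely be unbounded (e.g.\ if $A^l_n = 0$); however, the paper's own proof likewise concludes only ``if $n$ is large enough,'' so this caveat does not distinguish your argument from theirs.
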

\begin{proof}
\eqref{conv_Bn} implies that $\norm{(A^u_n)^* - (A^l_n)^*}_{(L^\infty)^* \to L^\infty} \leq \eta_n$ and $\norm{(A^u_n)^*\mu - (A^l_n)^*\mu}_\infty \leq \eta_n \norm{\mu}_1$ for any $\mu \in (L^\infty)^*$.
 Therefore, we have that
\begin{equation*}
-\eta_n \norm{\mu}_1 \one \leq (A^u_n)^* \mu - (A^l_n)^* \mu \leq \eta_n \norm{\mu}_1 \one
\end{equation*}
\noindent and
\begin{equation*}
(A^l_n)^* \mu \geq (A^u_n)^* \mu - \eta_n \norm{\mu}_1 \one.
\end{equation*}
\noindent Taking $\mu =\one$, we get that
\begin{equation*}
(A^l_n)^* \one \geq (A^u_n)^* \one - \eta_n \norm{\one}_1 \one \geq A^* \one - \eta_n \abs{\Omega} \one \geq (c - \abs{\Omega} \eta_n) \one.
\end{equation*}

\noindent Now consider $\mu = \frac{2}{c}(\one,0)$.
 It is a feasible solution of problem~\eqref{boundedness_dual}, since 
\begin{equation*}
\Bnsmu = \frac{2}{c}(A^l_n)^*\one - 0 \geq (2 - \frac{2}{c} \abs{\Omega} \eta_n) \one \geq \one
\end{equation*}
\noindent if $n$ is large enough, since $\norm{\one}_1 = |\Omega| < \infty$.
 Therefore, problem~\eqref{boundedness_primal} is bounded and $\norm{u}_1 \leq \frac{2}{c}(f^u_n,\one) \leq C$.
\end{proof}
 
 %%%%%%%%%%%%%%%%%%%%%%%%%%%%%%%%%%%%%%%%
 \subsection{Strong duality in the limit case}

Since the exact operator $\exB$ is ill-posed, we cannot expect Robinson regularity to hold in the primal limit problem~\eqref{primal_limit} and, therefore, we cannot guarantee strong duality between~\eqref{primal_limit} and~\eqref{dual_limit} or even the existence of solutions of the dual limit problem~\eqref{dual_limit}, let alone its stability and convergence of the solutions of~\eqref{dual_n}. 
 As usual in ill-posed problems, we will need to make an additional assumption about the dual limit problem, called the \emph{source condition}~\cite{Burger_Osher:2004}, which in our case is can be written as follows:
\begin{assumption}[Source condition]\label{ass_2}
Assume that $\exists \muquer \geq 0$ such that 
\begin{equation}\label{s.c.}
-\exB^*\muquer \in \dJ(\uquerJ).
\end{equation}
\end{assumption}

Let us note that since $\muquer = (\muquer_1,\muquer_2)$, where $\muquer_{1,2} \in (L^\infty)^*_+$, $-\exB^*\muquer = A^*\muquer_2 - A^*\muquer_1 = A^*(\muquer_2 - \muquer_1) = A^*\nuquer$ with $\nuquer \in (L^\infty)^*$.
 Therefore, \eqref{s.c.}~implies the source condition from~\cite{Burger_Osher:2004}.
 On the other hand, since every element $\nu \in (L^\infty)^*$ can be represented as a difference of two positive elements $\nu = \nu_+ - \nu_-$~\cite{Schaefer}, the source condition from~\cite{Burger_Osher:2004} also implies~\eqref{s.c.}.

\begin{proposition}
Under Assumption~\ref{ass_2}, the pair $(0,\muquer)$ solves the dual problem~\eqref{dual_limit} and strong duality between~\eqref{primal_limit} and~\eqref{dual_limit} holds.
\end{proposition}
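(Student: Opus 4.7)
The plan is straightforward: verify that $(0,\muquer)$ is feasible for the dual, compute the dual objective value there, and then match it to the primal optimal value via weak duality.

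First I would check dual feasibility of $(0,\muquer)$. By assumption, $\muquer\geq 0$, and of course $\lambda=0\geq 0$. For the constraint $\lambda-\exB^*\muquer\in\dJ(0)$, I would use the fact that for an absolutely one-homogeneous functional one has the inclusion $\dJ(u)\subseteq\dJ(0)$: indeed, if $p\in\dJ(u)$ then by the characterization recalled above equation~\eqref{s.c.} the subgradient inequality $\reg(v)\geq(p,v)$ holds for every $v\in L^1$, which is exactly the defining condition for $p\in\dJ(0)$. Applied to $p=-\exB^*\muquer\in\dJ(\uquerJ)$, this gives $-\exB^*\muquer\in\dJ(0)$, so $(0,\muquer)$ is dual feasible.

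Next I would evaluate the dual objective at $(0,\muquer)$. Since $\uquerJ$ is a $\reg$-minimising solution, $A\uquerJ=f$ and hence $\exB\uquerJ=\exphi$. Combining this with the subgradient characterization $\reg(\uquerJ)=(-\exB^*\muquer,\uquerJ)$ (valid because $-\exB^*\muquer\in\dJ(\uquerJ)$ and $\reg$ is absolutely one-homogeneous), I obtain
\begin{equation*}
(\muquer,-\exphi)=(\muquer,-\exB\uquerJ)=(-\exB^*\muquer,\uquerJ)=\reg(\uquerJ).
\end{equation*}
So the dual objective at $(0,\muquer)$ equals the primal objective at $\uquerJ$.

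Finally I would invoke weak duality, which holds with no regularity assumption. For any primal-feasible $u\geq 0$ with $\exB u=\exphi$ and any dual-feasible $(\lambda,\mu)$ with $\lambda,\mu\geq 0$ and $\lambda-\exB^*\mu\in\dJ(0)$, the defining subgradient inequality at $0$ applied to $u$ gives
\begin{equation*}
\reg(u)\geq(\lambda-\exB^*\mu,u)=(\lambda,u)+(\mu,-\exB u)=(\lambda,u)+(\mu,-\exphi)\geq(\mu,-\exphi),
\end{equation*}
since $(\lambda,u)\geq 0$ by positivity. Taking infimum over primal feasible $u$ and supremum over dual feasible $(\lambda,\mu)$ yields the weak duality chain, and the pair $(0,\muquer)$ achieves both bounds by the previous paragraph. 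Hence $(0,\muquer)$ is a dual optimiser and strong duality holds.

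I do not expect any real obstacle here: the source condition essentially produces the dual certificate by hand, and the only non-trivial ingredient is the inclusion $\dJ(\uquerJ)\subseteq\dJ(0)$, which is a standard consequence of absolute one-homogeneity and is covered by the appendix referenced in the excerpt. The argument does not require Robinson regularity, which is precisely why it can be carried out for the (ill-posed) limit problem where Lemma~\ref{lemma:robinson_primal} is no longer available.
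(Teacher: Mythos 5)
Your proof is correct and follows essentially the same route as the paper: evaluate the dual objective at $(0,\muquer)$ via the one-homogeneity identity $(\muquer,-\exphi)=(-\exB^*\muquer,\uquerJ)=\reg(\uquerJ)$ and conclude by weak duality. The only difference is that you spell out the dual feasibility check ($\dJ(\uquerJ)\subseteq\dJ(0)$) and the weak-duality inequality explicitly, which the paper simply cites; this is added detail, not a different argument.
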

\begin{proof}
Since $\reg$ is absolute one-homogeneous, the source condition~\eqref{s.c.} implies that $(\muquer, -\exphi) = (\muquer,-\exB\uquerJ) = (-\exB^* \muquer,\uquerJ) = \reg(\uquerJ)$ (see Proposition~\ref{prop:one_homog} in Appendix~\ref{app:abs_one_homogeneous}). 
 On the other hand, for any feasible $\mu$, $(\mu,-\exphi) \leq \reg(\uquerJ)$ by weak duality.
 Therefore, the pair $(0,\muquer)$ solves~\eqref{dual_limit} and strong duality holds.
\end{proof}

%%%%%%%%%%%%%%%%%%%%%%%%%%%%%%%%%%%%%%%%%%
\subsection{Stability of the dual problem} \label{sec:stability_dual}
 
The goal of this section is to show that the feasible set in the dual limit problem~\eqref{dual_limit} is stable under perturbations of the following form:
\begin{equation*}
\lambda - \exB^*\mu + r \in \dJ(0)
\end{equation*}
\noindent for some small $r \in L^\infty$.
 Denote by $\fs(r)$ the feasible set of the perturbed problem.
 From Proposition~\ref{Robinson_stability} we know that if Robinson condition holds at some point $(\lambda_0, \mu_0)$ at $r=0$ then $\dist((\lambda_0, \mu_0),\fs(r)) = \bigO(\dist(\lambda_0 - \exB^*\mu_0 + r, \dJ(0)))$.

In order to show that Robinson condition~\eqref{robinson_HPU} is satisfied in~\eqref{dual_limit} at  $(0,\muquer)$, we need to make the following
\begin{assumption}\label{ass_3}
Assume that
\begin{equation*}
0 \in \interior_{L^\infty} \dJ(0).
\end{equation*}
\end{assumption}

We emphasise that, since we consider the regularisation functional in $L^1$, its subdifferential at zero should be considered in $L^\infty$, rather than, for instance, $\BV^*$.
Assumption~\ref{ass_3} holds, for example, for the (weighted) $\BV$ norm $\reg(\cdot) = \TV(\cdot) + \gamma\norm{\cdot}_1$, $\gamma>0$, also in the case when it is considered as a functional from $L^1$ to $\R \cup \{\infty\}$ and not from $\BV$ to $\R$, as shown in Appendix~\ref{App:dJ(0)}.
 Assumption~\ref{ass_3} fails, however, for $\reg(\cdot) = \TV(\cdot)$ (see Appendix~\ref{App:dJ(0)} as well).
 
% \hl{Say something about the relative interior?}

\begin{lemma}\label{robinson_dual}
Under Assumption~\ref{ass_3} Robinson condition~\eqref{robinson_HPU} holds in~\eqref{dual_limit}.
\end{lemma}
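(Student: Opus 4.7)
The plan is to unwind the Robinson regularity condition~\eqref{robinson_HPU} for the dual problem~\eqref{dual_limit} and then read off the conclusion directly from Assumption~\ref{ass_3} by a trivial choice of the constraint variables. In the notation of~\eqref{opt_prob}, the relevant objects for~\eqref{dual_limit} are the variable $x=(\lambda,\mu)$, the closed convex cone $\mathcal{C}=\{(\lambda,\mu)\colon \lambda\geq 0,\,\mu\geq 0\}$, the linear (hence continuously Fr\'echet differentiable) constraint map $G(\lambda,\mu) = \lambda - \exB^*\mu$ taking values in $L^\infty$, and the closed convex set $\cone = \dJ(0)\subset L^\infty$; a natural candidate at which to check regularity is $x_0=(0,\muquer)$, which is feasible thanks to the source condition (note that $\dJ(\uquerJ)\subset\dJ(0)$ by absolute one-homogeneity, so $-\exB^*\muquer\in\dJ(0)$).

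First I would exploit the linearity of $G$, in complete analogy with the simplification that produced~\eqref{robinson} from~\eqref{robinson_HPU} in the primal case, to rewrite
\[
G(x_0) + G_x(x_0)(\mathcal{C}-x_0) \;=\; G(\mathcal{C}) \;=\; \{\lambda - \exB^*\mu \colon \lambda\geq 0,\,\mu\geq 0\}.
\]
Consequently~\eqref{robinson_HPU} collapses to the requirement $0\in\interior\bigl(G(\mathcal{C})-\dJ(0)\bigr)$, with interior taken in the $L^\infty$-topology. Since $(0,0)\in\mathcal{C}$ and $G(0,0)=0$, the origin already lies in $G(\mathcal{C})$, hence $-\dJ(0)\subset G(\mathcal{C})-\dJ(0)$ and it suffices to establish $0\in\interior\bigl(-\dJ(0)\bigr)$.

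Finally, absolute one-homogeneity of $\reg$ makes $\dJ(0)$ symmetric about the origin: if $p\in\dJ(0)$ then for every $v\in L^1$ we have $(-p,v)=(p,-v)\leq \reg(-v)=\reg(v)$, so $-p\in\dJ(0)$. Thus $-\dJ(0)=\dJ(0)$ and Assumption~\ref{ass_3} yields exactly $0\in\interior\dJ(0)$, closing the argument. I do not foresee a substantial obstacle; the only delicate point worth flagging is that the interior in~\eqref{robinson_HPU} and the interior in Assumption~\ref{ass_3} must be taken in the same topology, which is automatic here because both refer to $L^\infty$ -- the range of $\exB^*$ and the ambient space in which $\dJ(0)$ is viewed, as fixed in the remarks following~\eqref{dual}.
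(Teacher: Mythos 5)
Your proposal is correct and follows essentially the same route as the paper: after reducing Robinson's condition for the linear constraint map to $0 \in \interior\bigl(\{\lambda - \exB^*\mu - p \mid \lambda,\mu \geq 0,\ p \in \dJ(0)\}\bigr)$, both arguments set $\lambda = \mu = 0$ and absorb arbitrary small perturbations into $\dJ(0)$ via Assumption~\ref{ass_3} (the paper does this by writing $p = -q$ directly, you by noting $-\dJ(0) = \dJ(0)$, which amounts to the same thing since the $L^\infty$ ball is symmetric). Your explicit identification of the data of~\eqref{opt_prob} and of the feasible point $(0,\muquer)$ is a harmless refinement, not a different method.
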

\begin{proof}
 We need to show that
\begin{equation*}
0 \in \interior(\{ \lambda - \exB^*\mu - p \, | \,\lambda, \mu \geq 0, \, p \in \dJ(0)\})
\end{equation*}

For this, we need to show that for an arbitrary $q \in L^\infty$, $\norm{q} < \eps$, we have that $q \in \{ \lambda - \exB^*\mu - p \, | \,\lambda, \mu \geq 0, \, p \in \dJ(0)\}$, if $\eps$ is small enough. Fix some $q \in L^\infty$, $\norm{q} < \eps$.
 We need to find $\lambda, \mu \geq0$ and $p \in \dJ(0)$ such that
\begin{equation*}
q = \lambda - \exB^*\mu - p.
\end{equation*}

The required condition is satisfied if we take $\lambda = \mu = 0$ and $p=-q$.
 Since $\norm{p} = \norm{q} < \eps$ and $0 \in \interior \dJ(0)$, we see that $p \in \dJ(0)$ and Robinson condition is satisfied.
\end{proof}

%%%%%%%%%%%%%%%%%%%%%%%%%%%%%%%%%%%%%%%%%%
\subsection{Boundedness of Lagrange multipliers $\mun$}\label{boundedness_of_mun}
%%%%%%%%%%%%%%%%%%%%%%%%%%%%%%%%%%%%%%%%%%
Now we want to investigate the convergence of the Largange multipliers $\mun$ and $\lambdan$, which by the results of Section~\ref{primal_dual_relationship} are related to the subgradient of $\reg$ at $\un$. As noted earlier, convergence of the subgradient plays an important role in establishing convergence rates.

\paragraph{The case $B_n \equiv \exB$.} Again, we will first consider the case $B_n \equiv \exB$.
 We will see that it significantly differs from the general case, because it does not require Assumption~\ref{ass_3}.
\begin{theorem} Suppose that $B_n \equiv B$ for all $n$ and Assumptions~\ref{ass_1} (convergence) and~\ref{ass_2} (source condition) hold.
 Then
\begin{equation}\label{mun_bounded_1}
\norm{\mun}_1 \leq C_0 \norm{\muquer}_1.
\end{equation}
\end{theorem}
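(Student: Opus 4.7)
The plan is to exploit the fact that when $B_n \equiv \exB$ the source condition gives a direct comparison point in the dual problem at level $n$. Because $-\exB^*\muquer \in \dJ(\uquerJ) \subseteq \dJ(0)$, the pair $(0,\muquer)$ is feasible in~\eqref{dual_n}. Optimality of $(\lambdan,\mun)$ therefore yields the crucial inequality
\begin{equation*}
(\mun, -\phi_n) \geq (\muquer, -\phi_n).
\end{equation*}
The strategy is to sandwich both sides between quantities involving $\reg(\uquerJ)$ using the two-sided bound $\eps_n \one \leq \phi_n - \exphi \leq C_0 \eps_n \one$ from Assumption~\ref{ass_1}, so that the gap must absorb $\eps_n \norm{\mun}_1$.

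For the right-hand side, I would write $(\muquer,-\phi_n) = (\muquer,-\exphi) - (\muquer,\phi_n-\exphi)$, use the one-homogeneity identity $(\muquer,-\exphi) = \reg(\uquerJ)$ (which follows from the source condition and $\exB\uquerJ = \exphi$, exactly as in the strong-duality proposition for the limit problem), and bound $(\muquer,\phi_n-\exphi) \leq C_0 \eps_n \norm{\muquer}_1$ because $\muquer \geq 0$. This gives $(\muquer,-\phi_n) \geq \reg(\uquerJ) - C_0\eps_n \norm{\muquer}_1$.

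For the left-hand side, the same splitting $(\mun, -\phi_n) = (\mun,-\exphi) - (\mun,\phi_n-\exphi)$ together with $(\mun,\phi_n-\exphi) \geq \eps_n \norm{\mun}_1$ yields $(\mun,-\phi_n) \leq (\mun,-\exphi) - \eps_n \norm{\mun}_1$. The remaining piece is to bound $(\mun,-\exphi)$ from above by $\reg(\uquerJ)$: since $\lambdan - \exB^*\mun \in \dJ(0)$ and $\uquerJ \geq 0$, the subgradient inequality at $0$ applied to $\uquerJ$ gives $(\lambdan-\exB^*\mun,\uquerJ) \leq \reg(\uquerJ)$, and dropping $(\lambdan,\uquerJ) \geq 0$ produces $-(\exB^*\mun,\uquerJ) = (\mun,-\exphi) \leq \reg(\uquerJ)$.

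Chaining the three estimates,
\begin{equation*}
\reg(\uquerJ) - \eps_n\norm{\mun}_1 \geq (\mun,-\phi_n) \geq (\muquer,-\phi_n) \geq \reg(\uquerJ) - C_0\eps_n\norm{\muquer}_1,
\end{equation*}
and the $\reg(\uquerJ)$ terms cancel to leave $\eps_n\norm{\mun}_1 \leq C_0 \eps_n \norm{\muquer}_1$, giving~\eqref{mun_bounded_1}. There is no real obstacle here; the only subtlety is that the argument depends on having the lower bound $\phi_n - \exphi \geq \eps_n \one$ strictly (not merely $\geq 0$), which is why Assumption~\ref{ass_1} is formulated with two-sided control rather than a single-sided convergence rate. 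In the general case $B_n \neq \exB$, this clean cancellation will fail because the feasibility of $(0,\muquer)$ in the perturbed dual is lost, which is precisely why Assumption~\ref{ass_3} and the Robinson stability result from Section~\ref{sec:stability_dual} will be needed there.
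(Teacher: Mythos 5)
Your proof is correct and takes essentially the same route as the paper's: feasibility of $(0,\muquer)$ in the level-$n$ dual via the source condition, a comparison of $(\mun,-\exphi)$ against the limit dual's optimal value $\reg(\uquerJ)$ (you inline the weak-duality/subgradient-inequality argument that the paper packages in its earlier proposition on strong duality in the limit case), and the two-sided bound of Assumption~\ref{ass_1} to extract $\eps_n\norm{\mun}_1 \leq C_0\,\eps_n\norm{\muquer}_1$. The only cosmetic difference is that the paper subtracts the two feasibility/optimality inequalities directly to get $(\mun,\phi_n-\exphi) \leq (\muquer,\phi_n-\exphi)$, whereas you chain them through the intermediate value $\reg(\uquerJ) = (\muquer,-\exphi)$; the algebra is identical.
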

\begin{proof}
Consider two problems~\eqref{dual_n} and~\eqref{dual_limit}.
 Since $B_n \equiv \exB$, their feasible sets coincide and $(0,\muquer)$ is a feasible solution of~\eqref{dual_n}.
 Therefore, $(\muquer,-\phi_n) \leq (\mun,-\phi_n)$.
 Similarly, $(\lambdan,\mun)$ is a feasible solution of~\eqref{dual_limit} and $(\mun,-\exphi) \leq (\muquer,-\exphi)$.
 Combining these two estimates, we conclude that $(\mun,\phi_n-\exphi) \leq (\muquer, \phi_n-\exphi)$.
 Assumption~\ref{ass_1} implies that
\begin{equation*}
\eps_n (\mun,\one) \leq (\mun,\phi_n-\exphi) \leq C_0 \cdot \eps_n (\muquer, \one).
\end{equation*}
\noindent Since $\mun \geq 0$ and $\muquer \geq 0$, estimate~\eqref{mun_bounded_1} follows.
\end{proof}

%\hl{Connection to Benning, Burger: Error Estimates for General Fidelities}~\cite{Benning_Burger_general_fid_2011}.

\paragraph{The general case.} In the general case the optimal solution of one problem is no longer a feasible solution of the other one, but due to the stability of the feasible set, there are feasible points ``not too far away".

\begin{theorem}\label{thm:mun_bounded} 
Suppose that  Assumptions~\ref{ass_1} (convergence),~\ref{ass_2} (source condition) and~\ref{ass_3} (non-empty interior) hold.
 Then there exists a constant $\tilde C \geq C_0$ such that
\begin{equation}\label{mun_bounded_2}
\norm{\mun}_1 \leq \tilde C \norm{\muquer}_1.
\end{equation}
%\noindent where $\tilde C$ shows that $\tilde C$ can be chosen as a function of $n$ that converges from above to $C_0$ from Assumption~\ref{ass_1} as $n \to \infty$.
\end{theorem}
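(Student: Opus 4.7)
The plan is to adapt the argument for $B_n\equiv\exB$ to the general setting by transferring feasibility between~\eqref{dual_n} and~\eqref{dual_limit} only approximately, with the slack controlled by the hypothesis $0\in\interior_{L^\infty}\dJ(0)$ of Assumption~\ref{ass_3}. First, fix $\rho>0$ so that $B_{L^\infty}(0,\rho)\subset\dJ(0)$, and use convexity of $\dJ(0)$ together with the estimate $\norm{(B_n^*-\exB^*)\mu}_\infty\leq\eta_n\norm{\mu}_1$ (from Assumption~\ref{ass_1}) to prove a feasibility-transfer lemma: if $(\lambda,\mu)\geq 0$ satisfies $\lambda-\exB^*\mu\in\dJ(0)$, then for every $t\in(0,1]$ with $t\leq\rho/(\rho+\eta_n\norm{\mu}_1)$ the scaled pair $(t\lambda,t\mu)$ is feasible for~\eqref{dual_n}, and symmetrically with the roles of $\exB$ and $B_n$ exchanged. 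The identity $t\lambda-B_n^*(t\mu)=tp_0+(1-t)\tilde q$, with $p_0\defeq\lambda-\exB^*\mu\in\dJ(0)$ and $\tilde q\defeq -t(1-t)^{-1}(B_n^*-\exB^*)\mu\in B_{L^\infty}(0,\rho)\subset\dJ(0)$, makes the claim immediate from convexity. Applied to the source-condition pair $(0,\muquer)$ this yields $\alpha_n\defeq\rho/(\rho+\eta_n\norm{\muquer}_1)$ with $(0,\alpha_n\muquer)$ feasible for~\eqref{dual_n}, and applied to the optimal $(\lambdan,\mun)$ it yields $\alpha_n'\defeq\rho/(\rho+\eta_n\norm{\mun}_1)$ with $(\alpha_n'\lambdan,\alpha_n'\mun)$ feasible for~\eqref{dual_limit}.

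Optimality in the two dual problems then gives $\alpha_n(\muquer,-\phi_n)\leq(\mun,-\phi_n)$ and $\alpha_n'(\mun,-\exphi)\leq\reg(\uquerJ)$. Setting $\delta_n\defeq\phi_n-\exphi$, Assumption~\ref{ass_1} yields $\eps_n\one\leq\delta_n\leq C_0\eps_n\one$, and non-negativity of $\mun,\muquer$ turns this into $\eps_n\norm{\mun}_1\leq(\mun,\delta_n)$ and $(\muquer,\delta_n)\leq C_0\eps_n\norm{\muquer}_1$. Using the decompositions $(\mun,-\exphi)=(\mun,-\phi_n)+(\mun,\delta_n)$ and $(\muquer,-\phi_n)=\reg(\uquerJ)-(\muquer,\delta_n)$ and chaining the two optimality inequalities produces the key estimate
\begin{equation*}
\alpha_n'\eps_n\norm{\mun}_1\leq\reg(\uquerJ)(1-\alpha_n\alpha_n')+\alpha_n\alpha_n'\,C_0\eps_n\norm{\muquer}_1.
\end{equation*}

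The concluding step is to bound $1-\alpha_n\alpha_n'\leq \eta_n(\norm{\muquer}_1+\norm{\mun}_1)/\rho+\bigO(\eta_n^2)$ directly from the explicit form of $\alpha_n,\alpha_n'$, divide by $\eps_n$, and invoke Assumption~\ref{ass_1} in the form $\eta_n/\eps_n\leq D_0$ for $n$ large; rearranging produces an affine inequality of the shape
\begin{equation*}
\norm{\mun}_1\bigl(1-\reg(\uquerJ)D_0/\rho\bigr)\leq\norm{\muquer}_1\bigl(C_0+\reg(\uquerJ)D_0/\rho\bigr)
\end{equation*}
for all sufficiently large $n$, from which $\tilde C=(C_0+\reg(\uquerJ)D_0/\rho)/(1-\reg(\uquerJ)D_0/\rho)\geq C_0$ can be read off. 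The main obstacle is the self-referential nature of the bound: because $\alpha_n'$ depends on $\norm{\mun}_1$, the estimate is a priori quadratic in $\norm{\mun}_1$, and closing it requires the coefficient of $\norm{\mun}_1$ on the left to remain positive, i.e.\ a compatibility condition of the form $\reg(\uquerJ)D_0<\rho$ between the noise-ratio constant $D_0$, the radius $\rho$ of $\dJ(0)$, and the optimal cost. This delicate bookkeeping (together with a short bootstrap using that $\mun$ is bounded for every fixed $n$ by Proposition~\ref{Robinson_strong_duality}) is what distinguishes the general case from the simpler argument available when $B_n\equiv\exB$.
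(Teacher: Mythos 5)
Your proposal is correct, and it reaches the paper's estimate by a genuinely different route. The paper verifies Robinson regularity of the limit dual problem (Lemma~\ref{robinson_dual}, using Assumption~\ref{ass_3}) and then invokes the abstract Bonnans--Shapiro stability result (Proposition~\ref{Robinson_stability}) to produce, near $(0,\muquer)$ and near $(\lambdan,\mun)$, feasible points of~\eqref{dual_n} and~\eqref{dual_limit} respectively, at distance $O(\norm{\exB-B_n}\,\norm{\cdot}_1)$; the two resulting cross-optimality inequalities are then combined with the gap $\eps_n \one \leq \phi_n - \exphi \leq C_0\eps_n\one$ exactly as you do, and the bound closes under a smallness condition $\tilde C_2 D_0 < 1$ with $\tilde C_2$ an implicit stability constant. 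You replace the stability machinery with an explicit scaling lemma: a feasible pair, multiplied by $t = \rho/(\rho + \eta_n\norm{\mu}_1)$, stays feasible for the other problem because the operator-mismatch term can be written into a convex combination with an element of the ball $B_{L^\infty}(0,\rho)\subset\dJ(0)$. This is an additive-versus-multiplicative trade: the paper perturbs the point, you shrink it. What your version buys is that it is self-contained and elementary, the constants are explicit ($\rho$, $C_0$, $D_0$, $\reg(\uquerJ)$ rather than unspecified regularity constants), and the feasibility transfer is global --- it applies to $(\lambdan,\mun)$ without knowing in advance that it lies in a neighbourhood of $(0,\muquer)$, a locality requirement of Proposition~\ref{Robinson_stability} that the paper's proof quietly glosses over. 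What it costs is the slightly more delicate bookkeeping you describe: the factor $\alpha_n'$ depends on $\norm{\mun}_1$ itself, so the inequality must be multiplied out (it is in fact affine, not quadratic, in $\norm{\mun}_1$) and closes only under the compatibility condition $\reg(\uquerJ) D_0 < \rho$. Note that this is not a defect relative to the paper: both proofs need a smallness condition on $D_0$ that is absent from the theorem statement, and both establish the bound only for $n$ large enough (finitely many indices being absorbed by enlarging $\tilde C$, since each dual solution set is bounded by Proposition~\ref{Robinson_strong_duality}); you merely make your condition explicit.
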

\begin{proof}
Consider first the limit problem~\eqref{dual_limit}.
 Since Robinson condition holds in the dual problem (Lemma~\ref{robinson_dual}), by Proposition~\ref{Robinson_stability} we have that
\begin{equation*}
\dist((0,\muquer), \{\lambda,\mu \geq 0 \colon \lambda - \Bnsmu \in \dJ(0)\}) \leq C_1 \dist(-B_n^*\muquer, \dJ(0)) \leq C_1 \norm{\exB-B_n} \norm{\muquer}_1,
\end{equation*}
\noindent where the last inequality holds because $-B_n^*\muquer = -\exB^*\muquer + (\exB^*\muquer - B_n^*\muquer)$ and $-\exB^*\muquer \in \dJ(0)$.

Therefore, there exist $\lambdat, \mut \geq 0$ such that $\lambdat - B_n^*\mut \in \dJ(0)$ and $\norm{(\lambdat, \mut) - (0,\muquer)} \leq C_1 \norm{\exB-B_n} \norm{\muquer}_1$ (and, therefore, $\norm{\mut-\muquer}_1 \leq C_1 \norm{\exB-B_n} \norm{\muquer}_1$).
 Since $(\lambdat, \mut)$ is feasible, we get that $(\mut, -\phi_n) \leq (\mun,-\phi_n)$.
 Furthermore, since $\abs{(\muquer-\mut,\phi_n)} \leq \norm{\muquer-\mut}\norm{\phi_n} \leq \tilde C_1 \norm{\exB-B_n} \norm{\muquer}_1$, where $\tilde C_1 = C_1 \norm{\phi_n}$ and can be chosen arbitrary close to $C_1 \norm{\exphi}$, we get that
\begin{equation}\label{mu_n:est_1}
(\muquer,-\phi_n) \leq (\mun,-\phi_n) + \tilde C_1 \norm{\exB-B_n} \norm{\muquer}_1.
\end{equation}

Similarly, in the dual problem for finite $n$~\eqref{dual_limit} we get that there exist $\lambdat_n, \mut_n \geq 0$ such that $\lambdat_n - \exB^*\mut_n \in \dJ(0)$ and $\norm{\mut_n-\mun} \leq \tilde C_2 \norm{\exB-B_n} \norm{\mun}_1$.
 Since $(\lambdat_n, \mut_n)$ is feasible, we get that $(\mut_n,-\exphi) \leq (\muquer,-\exphi)$ and
\begin{equation}\label{mu_n:est_2}
(\mun, -\exphi) \leq (\muquer, -\exphi) + \tilde C_2 \norm{\exB-B_n} \norm{\mun}_1.
\end{equation}

Combining~\eqref{mu_n:est_1} and~\eqref{mu_n:est_2}, we get the following estimate
\begin{multline*}
\eps_n \norm{\mun}_1 \leq (\mun,\phi_n-\exphi) \leq (\muquer,\phi_n-\exphi) + \tilde C_1 \norm{\exB-B_n} \norm{\muquer}_1 + \tilde C_2 \norm{\exB-B_n} \norm{\mun}_1 \leq \\
C_0\cdot \eps_n \norm{\muquer}_1 + \tilde C_1 \eta_n \norm{\muquer}_1 + \tilde C_2 \eta_n \norm{\mun}_1,
\end{multline*}
\noindent or, equivalently,
\begin{equation*}
\norm{\mun}_1 \left( 1 - \tilde C_2 \frac{\eta_n}{\eps_n} \right) \leq C_0\norm{\muquer}_1 \left(1 + \frac{\tilde C_1}{C_0}\frac{\eta_n}{\eps_n}\right).
\end{equation*}
\noindent If the constant $D_0$ in~\eqref{etan/epsn} is small enough, this implies~\eqref{mun_bounded_2} with
\begin{equation}\label{eq:tilde C}
\tilde C = C_0 \frac{\left(1 + \frac{\tilde C_1}{C_0}D_0\right)}{\left( 1 - \tilde C_2 D_0 \right)}.
\end{equation}
\end{proof}

%To proceed further, we need to make an assumption about the speed of convergence of the operator relative to the convergence of the data. In particular, we assume that the error in the operator converges faster than the error in the data.
%\begin{assumption}\label{ass_4}
%Assume that
%\begin{equation*}
%\lim_{n \to \infty} \frac{\norm{\exB-B_n}}{\eps_n} = 0
%\end{equation*}
%\end{assumption}
%
%With this assumption, we obtain the following estimate:
%\begin{equation}\label{est_mu}
%\norm{\mu_n}_1 \leq \tilde C \norm{\muquer}_1.
%\end{equation}

\begin{corollary}
Since the sequence $\{\mun\}$ is bounded in $(L^\infty)^*$, the sequence $\{\Bnsmun\}$ is bounded in $L^\infty$ if the operators $B_n^*$ are bounded from $(L^\infty)^*$ to $L^\infty$.
\end{corollary}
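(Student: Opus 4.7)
The plan is essentially a one-line calculation unpacked into its two ingredients. I would write
\begin{equation*}
\norm{\Bnsmun}_\infty \leq \norm{B_n^*}_{(L^\infty)^* \to L^\infty} \cdot \norm{\mun}_{(L^\infty)^*}
\end{equation*}
and then check that each factor on the right is bounded uniformly in $n$. The second factor is controlled by Theorem~\ref{thm:mun_bounded}, which gives $\norm{\mun}_{(L^\infty)^*} \leq \tilde C \norm{\muquer}_1$ for all $n$ large enough. The first factor is the assumption of the corollary, taken to mean that the operator norms $\norm{B_n^*}_{(L^\infty)^* \to L^\infty}$ are uniformly bounded; note that, if one wants to avoid phrasing this as a separate hypothesis, it follows automatically from Assumption~\ref{ass_1}, since $\norm{B-B_n}_{L^1 \to L^\infty} \leq \eta_n$ with $\eta_n \downarrow 0$ implies $\norm{B_n}_{L^1 \to L^\infty} \leq \norm{B}_{L^1 \to L^\infty} + \eta_1$, and adjoints preserve the operator norm.

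There is no real obstacle here; the only subtle point to flag in the exposition is the identification $(L^1)^* \cong L^\infty$, which is what allows the codomain of $B_n^*$ to be identified with $L^\infty$ in the first place (and what makes the phrasing of the hypothesis meaningful). Everything else is direct from the definitions, so I would state the corollary as an immediate consequence of Theorem~\ref{thm:mun_bounded} and not dwell on the details.
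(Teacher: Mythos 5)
Your proof is correct and matches the paper's (implicit) argument: the corollary is stated there without proof precisely because it follows from the submultiplicative bound $\norm{\Bnsmun}_\infty \leq \norm{B_n^*}_{(L^\infty)^* \to L^\infty}\norm{\mun}_{(L^\infty)^*}$ together with Theorem~\ref{thm:mun_bounded}, exactly as you write. Your added observation that the uniform bound on $\norm{B_n^*}$ already follows from~\eqref{conv_Bn} in Assumption~\ref{ass_1} (since adjoints preserve operator norms under the identification $(L^1)^* \cong L^\infty$) is a sensible clarification of the hypothesis, not a different route.
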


\begin{remark}
Note that in the case $B_n \equiv \exB$ we did not use the stability of the dual problem and, therefore, did not need Assumption~\ref{ass_3} to show that the Lagrange multipliers $\mun$ are bounded.
 This demonstrates that Assumption~\ref{ass_3} plays an important role specifically in problems with an imperfect operator.
\end{remark}

%%%%%%%%%%%%%%%%%%%%%%%%%%%%%%%%%%%%%%%%%%
\subsection{Boundedness of Lagrange multipliers $\lambdan$} \label{sec:boundedness_lambdan}
\begin{proposition}
Suppose that $\reg(\one) < + \infty$.
 Then under the assumptions of Theorem~\ref{thm:mun_bounded} we have that  $\norm{\lambdan}_1 \leq C$.
\end{proposition}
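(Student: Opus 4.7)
The plan is to feed the constant function $\one$ into the subgradient inequality satisfied by $p_n := \lambdan - \Bnsmun$ and then use the uniform bound on $\mun$ from Theorem~\ref{thm:mun_bounded} to control the right-hand side. The hypothesis $\reg(\one) < \infty$ is what makes this test function give a nontrivial estimate, and the positivity $\lambdan \geq 0$ is what lets us convert the pairing $(\lambdan, \one)$ into the $L^1$-norm we want to bound.

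Concretely, by Proposition~\ref{prop:complementarity} we have $p_n \in \dJ(\un)$, and the characterization of the subdifferential of an absolutely one-homogeneous functional recalled after~\eqref{dual} immediately yields $\dJ(\un) \subseteq \dJ(0)$, so $(p_n, v) \leq \reg(v)$ for every $v \in L^1$. Since $\Omega$ is bounded, $\one \in L^1(\Omega)$, and taking $v = \one$ gives
\[ (\lambdan, \one) - (\mun, B_n \one) \leq \reg(\one). \]
Using $\lambdan \geq 0$ to rewrite the left-hand pairing as $\norm{\lambdan}_1$ and the elementary estimate $\abs{(\mun, B_n \one)} \leq \norm{\mun}_1 \norm{B_n \one}_\infty \leq \norm{\mun}_1 \norm{B_n}_{L^1 \to L^\infty} \abs{\Omega}$, I obtain
\[ \norm{\lambdan}_1 \leq \reg(\one) + \norm{\mun}_1 \norm{B_n}_{L^1 \to L^\infty} \abs{\Omega}. \]

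It remains to note that each factor on the right is uniformly bounded in $n$: Theorem~\ref{thm:mun_bounded} gives $\norm{\mun}_1 \leq \tilde C \norm{\muquer}_1$, while~\eqref{conv_Bn} yields $\norm{B_n}_{L^1 \to L^\infty} \leq \norm{\exB}_{L^1 \to L^\infty} + \eta_n$, which is bounded since $\eta_n \downarrow 0$. Combining these estimates furnishes the desired constant $C$. There is no real obstacle: the whole argument is a direct read-off of the subgradient inequality at a single well-chosen test point, and the one subtlety worth flagging is that $\reg(\one) < \infty$ is used both to make the inequality informative and to guarantee that $\one$ lies in the effective domain, so this hypothesis cannot be dropped.
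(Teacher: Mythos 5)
Your proof is correct and follows essentially the same route as the paper: test the subgradient inequality for $p_n = \lambdan - \Bnsmun \in \dJ(0)$ at $v=\one$, use $\lambdan \geq 0$ to identify $(\lambdan,\one)$ with $\norm{\lambdan}_1$, and control the remaining term via the uniform bound on $\norm{\mun}_1$ from Theorem~\ref{thm:mun_bounded}. The only immaterial difference is in bounding the cross term: the paper uses the order relation $B_n \leq \exB$ together with $\mun \geq 0$ to get $(\mun,B_n\one) \leq (\mun,\exB\one) \leq \norm{\mun}_1\norm{\exB\one}_\infty$, whereas you use the operator-norm estimate $\norm{B_n}_{L^1\to L^\infty} \leq \norm{\exB}_{L^1\to L^\infty} + \eta_n$ from~\eqref{conv_Bn}; both give a uniform constant.
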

\begin{proof}
Since $\lambdan - \Bnsmun \in \dJ(0)$, we have that $\forall n$ $\reg(u) - (\lambda_n - \Bnsmun,u) \geq 0$, or, equivalently,
\begin{equation*}
(\lambdan,u) \leq \reg(u) + (\mun,B_n u).
\end{equation*}
\noindent Choosing $u=\one$, we obtain an estimate of the $L^1$ norm of $\lambdan$ (since $\lambdan \geq 0$):
\begin{equation}\label{est_lambda}
\norm{\lambdan}_1 = (\lambdan,\one) \leq \reg(\one) + (\mun,B_n \one) \leq \reg(\one) + (\mun,\exB \one) \leq \reg(\one) + \norm{\mun}_1\norm{\exB \one}_\infty \leq C.
\end{equation}
\end{proof}

It is worth noting that, although $\lambdan \in L^\infty$, we only get a bound on the $L^1$ norm of $\lambdan$ here.
 
%Since $L^1$ is not a dual space of any Banach space, we consider $\lambda_n$ as elements of $(L^\infty)^*$ and note that due to its boundedness the sequence $\lambda_n$ contains a subsequence that weakly$^*$ converges in $(L^\infty)^*$ by the Banach-Alaoglu theorem~\cite{Burger_Osher_TV_Zoo}. We do not relabel this subsequence and write $\lambda_n \wsto \lhat$ in $(L^\infty)^*$. This will be useful when we will consider scalar products of $\lambda_n$ with characteristic functions of various sets.

%%%%%%%%%%%%%%%%%%%%%%%%%%%%%%%%%%%%%%%%%%
\subsection{Convergence of the subgradient}
%Boundedness in $\BV$*, weak* convergence of a subsequence; boundedness of $\Bnsmun$ in $L^\infty$, convergence of a subsequence; $(\lambda_n,\un)=0$, the same holds for the characteristic function of the level sets of $\un$; $\phat \in \dJ_{\V^*}(0)$.
 
 Now we are ready to study the convergence of the subgradient of $\reg$ at the optimal solution of the primal problem $\un$. 
% As we shall see later, this convergence plays an important role in establishing convergence rates and in studying geometric properties of the primal solution.
 
\begin{proposition} \label{prop:mun_conv}
 Under the assumptions of Theorem~\ref{thm:mun_bounded} the sequence $\mun$ has a weakly-$^*$ convergent subsequence (in $(L^\infty)^*$), which we still denote by $\mun$, $\mun \wsto \muhat$, and $\Bnsmun \wsto \exB^* \muhat$ in $L^\infty$.
\end{proposition}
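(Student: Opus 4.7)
The plan is to combine a compactness argument for $\mun$ itself with a continuity argument for the composition $B_n^*\mun$. Theorem~\ref{thm:mun_bounded} supplies the crucial input, namely the uniform bound $\|\mun\|_1 \leq \tilde C \|\muquer\|_1$, so that $\{\mun\}$ is norm-bounded in $(L^\infty)^*$. Applying the Banach--Alaoglu theorem one extracts a weak-$^*$ convergent subsequence $\mun \wsto \muhat$ in $(L^\infty)^*$; non-negativity passes to the limit because the positive cone in $(L^\infty)^*$ is weak-$^*$ closed (as the polar of the non-positive cone in $L^\infty$).

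For the second statement I would test $\Bnsmun - \exB^*\muhat$ against an arbitrary $g \in L^1$ and split into a ``perturbation of the operator'' term and a ``perturbation of the measure'' term:
\begin{equation*}
(g, \Bnsmun - \exB^*\muhat) \;=\; (B_n g - \exB g,\, \mun) \;+\; (\exB g,\, \mun - \muhat).
\end{equation*}
The first term is bounded, using~\eqref{conv_Bn} and the uniform bound of Theorem~\ref{thm:mun_bounded}, by
\begin{equation*}
|(B_n g - \exB g, \mun)| \leq \|B_n - \exB\|_{L^1 \to L^\infty}\, \|g\|_1\, \|\mun\|_1 \leq \eta_n\, \|g\|_1\, \tilde C \|\muquer\|_1 \longrightarrow 0.
\end{equation*}
The second term tends to zero because $\exB g \in L^\infty$ is a fixed admissible test element for the weak-$^*$ convergence $\mun \wsto \muhat$ already established. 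Since $g \in L^1$ was arbitrary, this is exactly weak-$^*$ convergence $\Bnsmun \wsto \exB^*\muhat$ in $L^\infty \cong (L^1)^*$.

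The delicate point of the argument is the sequential extraction in the first step: strictly speaking Banach--Alaoglu yields only subnet compactness in $(L^\infty)^*$, since $L^\infty(\Omega)$ is not separable. This is the only place where a technical remark is needed, and it is usually resolved by observing that for the purposes of this proposition one only requires convergence against the countably many test functions of the form $\exB g$ with $g$ ranging over a countable dense subset of $L^1$, which can be obtained via a standard diagonal argument; alternatively, under the structural assumptions in force (in particular when the multipliers naturally arise as Radon measures via the complementarity conditions of Proposition~\ref{prop:complementarity}), sequential weak-$^*$ compactness of bounded sets is automatic. The rest of the proof is then a routine verification.
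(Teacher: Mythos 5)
Your proof follows essentially the same route as the paper: Theorem~\ref{thm:mun_bounded} plus Banach--Alaoglu yields the weak-$^*$ convergent subsequence, and the limit of $\Bnsmun$ is identified through exactly the splitting $(\mun, B_n u - \exB u) + (\mun - \muhat, \exB u)$, with the first term controlled by~\eqref{conv_Bn} and the uniform bound on $\norm{\mun}_1$, and the second by the already-established weak-$^*$ convergence. Your closing remark on sequential versus net compactness in $(L^\infty)^*$ is a genuine subtlety that the paper glosses over (it simply cites Banach--Alaoglu despite the predual $L^\infty$ being non-separable), so that extra care is a refinement rather than a deviation.
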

\begin{proof}
Since $\mun$'s are bounded in $(L^\infty)^*$, $\mun \wsto \muhat$ (along a subsequence) in $(L^\infty)^*$ by the Banach-Alaoglu theorem~\cite{Burger_Osher_TV_Zoo}, i.e. for any $\phi \in L^\infty$ we have that $(\mun,\phi) \to (\muhat, \phi)$.
 Considering, for an arbitrary $u \in L^1$, the scalar product $(\Bnsmun,u)$, we note that $(\Bnsmun,u) = (\mun,B_n u) \to (\muhat,\exB u) = (\exB^*\muhat, u)$, since $\mun \wsto \muhat$ in $(L^\infty)^*$ and $B_n u \to \exB u$ in $L^\infty$.
 Therefore, $\Bnsmun \wsto \exB^* \muhat$ in $L^\infty$.
\end{proof}

\begin{remark}\label{min_norm_certificate}
Since~\eqref{mun_bounded_2} holds for any $\muquer$ delivering the source condition, i.e. every $\muquer \geq 0$ such that $\exists \lquer \geq 0$, $\lquer - \exB^*\muquer \in \dJ(\uquerJ)$, it also holds for the (possibly non-unique) minimum-norm certificate that solves the following problem:
\begin{equation}\label{eq:mumin}
(\lmin,\mumin) \in \argmin_{\lambda,\mu \geq 0} \norm{\mu}_1 \quad \text{s.t. } \lambda - \exB^* \mu \in \dJ(\uquerJ).
\end{equation}
\noindent Since $\mun \wsto \muhat$ in $L^\infty$, we have that $\norm{\muhat}_1 \leq \tilde C \norm{\mumin}_1$, where $\tilde C$ is given by~\eqref{eq:tilde C}.
 If the operator converges faster than the data, i.e $\lim_{n \to\infty} \frac{\eta_n}{\eps_n} = 0$, then, taking the limit in~\eqref{eq:tilde C} and letting $D_0 \to 0$, we get that $\norm{\muhat}_1 \leq C_0\norm{\mumin}_1$.
 On the other hand, since $\lhat - \exB^* \muhat \in \dJ(\uquerJ)$, we have that $\norm{\mumin}_1 \leq \norm{\muhat}_1$.
 If $C_0=1$, we have that $\norm{\muhat}_1 = \norm{\mumin}_1$, i.e. $\muhat$ is a minimum-norm certificate.
 In the general case~\eqref{conv_Bn} we can only say that the norm of $\muhat$ is bounded by that of $\muquer$ times a constant.
\end{remark}

We would like to have that the whole subgradient $p_n = \lambda_n - \Bnsmun$ is bounded is $L^\infty$, however, we have only a bound in $L^1$ for the first summand $\lambda_n$.
 However, we know that $(\lambda_n,\un)=0$ for all $n$ (cf.~\eqref{complementarity}) and, as we shall see later, the same holds for the characteristic functions of the level sets of $\un$ (see Section~\ref{level_sets}).
 Therefore, boundedness (in $L^\infty$) of $\Bnsmun$ will suffice in most cases.

To study the convergence of the subgradient $p_n$, let us consider the subspace $\V = \{u \in L^1 \colon \reg(u) < +\infty\}$.

%(a typical example would be $\reg(u) = \TV(u) + \gamma \norm{u}_1$ and $\V=\BV$)

 \begin{theorem} \label{thm:conv_subgrad_BV*}
 Suppose that $\reg(\cdot)$ is a norm on $\V = \{u \in L^1 \colon \reg(u) < +\infty\}$.
 Then, under the assumptions of Theorem~\ref{thm:mun_bounded}, we have that
 \begin{equation*}
 p_n = \lambda_n - \Bnsmu_n \wsto \phat \quad \text{in } \V^*
\end{equation*}
\noindent and $\phat \in \dJ_{\V^*}(\uquerJ)$ for all $\reg$-minimising solutions $\uquerJ$.
 We also have that $\lambda_n \wsto \lhat$ in $\V^*$, $(\lhat,u) \geq 0$ for any $u \in \V$ such that $u \geq 0$ and $(\lhat,\uquerJ) = 0$.
 All convergences are along a subsequence, which we do not relabel.
 \end{theorem}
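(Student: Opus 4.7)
The plan is to obtain the weak-$^*$ convergence of $p_n$ via a Banach--Alaoglu argument in $\V^*$, identify the limit using Proposition~\ref{prop:mun_conv}, and then pin down the complementarity $(\lhat,\uquerJ)=0$ by combining convergence of the primal values with the subgradient inequality. The main obstacle will be the last step, because the weak-$^*$ convergence of $\lambda_n$ in $\V^*$ pairs only with elements of $\V$, while we know $\un\to\uquerJ$ only in $L^1$.

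First, I would observe that $p_n = \lambda_n - \Bnsmun \in \dJ(0)$, so $\reg(v) \geq (p_n,v)$ for every $v\in L^1$; absolute one-homogeneity of $\reg$ applied to both $v$ and $-v$ upgrades this to $|(p_n,v)| \leq \reg(v) = \|v\|_\V$ for $v\in\V$, hence $\|p_n\|_{\V^*}\leq 1$. Banach--Alaoglu then yields a subsequence with $p_n \wsto \phat$ in $\V^*$, and passing to the limit in the subgradient inequality gives $\phat \in \dJ_{\V^*}(0)$.

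Next, I would identify $\phat$. Since $(\V,\reg)$ is a Banach space that embeds continuously into $L^1$ (this follows from the norm property of $\reg$ together with its lower semi-continuity on $L^1$, via the closed graph theorem applied to the inclusion map), every element of $L^\infty$ defines a bounded functional on $\V$, so $L^\infty\hookrightarrow\V^*$. Hence the weak-$^*$ convergence $\Bnsmun \wsto \exB^*\muhat$ in $L^\infty$ from Proposition~\ref{prop:mun_conv} transfers to $\V^*$, and writing $\lambda_n = p_n + \Bnsmun$ forces $\lambda_n \wsto \lhat := \phat + \exB^*\muhat$ in $\V^*$ along the same subsequence. Positivity $(\lhat,u)\geq 0$ for $u\in\V$ with $u\geq 0$ follows by taking the limit in $(\lambda_n,u) = \int \lambda_n u \geq 0$.

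The crucial step is $(\lhat,\uquerJ)=0$. Using $p_n\in\dJ(\un)$ together with the complementarity relations~\eqref{complementarity} one computes
\begin{equation*}
\reg(\un) = (p_n,\un) = (\lambda_n,\un) - (\mun,B_n\un) = -(\mun,\phi_n).
\end{equation*}
Theorem~\ref{thm:convergence} gives $\reg(\un)\to\reg(\uquerJ)$, while $\mun\wsto\muhat$ in $(L^\infty)^*$ paired with $\phi_n\to\exphi$ in $L^\infty$ yields $(\mun,\phi_n)\to(\muhat,\exphi)$, so that $\reg(\uquerJ) = -(\muhat,\exphi)$. Applying the subgradient inequality with $v=\uquerJ$ and using $\exB\uquerJ = \exphi$ then produces
\begin{equation*}
\reg(\uquerJ) \geq (\phat,\uquerJ) = (\lhat,\uquerJ) - (\muhat,\exphi) = (\lhat,\uquerJ) + \reg(\uquerJ),
\end{equation*}
which forces $(\lhat,\uquerJ)\leq 0$; combined with positivity, $(\lhat,\uquerJ)=0$. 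This upgrades the previous chain to the equality $(\phat,\uquerJ)=\reg(\uquerJ)$, which together with $\reg(v)\geq(\phat,v)$ identifies $\phat\in\dJ_{\V^*}(\uquerJ)$ by the characterisation of $\dJ$ recalled in Section~\ref{primal_and_dual}. Since the argument does not single out any particular $\reg$-minimising solution, the conclusion holds for every $\uquerJ$.
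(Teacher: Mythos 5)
Your proof is correct, and its first two stages coincide with the paper's: the Banach--Alaoglu argument giving $p_n \wsto \phat$ in $\V^*$ with $\phat$ in the unit ball of $\V^*$, and the identification $\lambda_n = p_n + \Bnsmun \wsto \lhat = \phat + \exB^*\muhat$ (your sign here is the right one; the paper writes $\lhat = \phat - \exB^*\muhat$, a typo inconsistent with its own final computation). The endgame, however, is arranged differently. The paper never applies the subgradient inequality at the limit: it first proves $(\phat,\uquerJ) = \reg(\uquerJ)$ by a pre-limit sandwich, using feasibility of $\uquerJ$ (i.e. $B_n\uquerJ \leq \phi_n$) and positivity of $\lambda_n,\mun$ to obtain $(p_n,\uquerJ) \geq (p_n,\un) = (\mun,-\phi_n) = \reg(\un)$, hence $0 \leq \reg(\uquerJ) - (p_n,\uquerJ) \leq \reg(\uquerJ) - \reg(\un) \to 0$; only afterwards does it deduce $(\lhat,\uquerJ) = 0$ from $\reg(\un) = (-\Bnsmun,\un) \to (-\exB^*\muhat,\uquerJ)$. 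You go the other way round: you first establish $\reg(\uquerJ) = -(\muhat,\exphi)$ (pairing $\mun \wsto \muhat$ with the strong convergence $\phi_n \to \exphi$, which is equivalent to the paper's pairing of $\Bnsmun \wsto \exB^*\muhat$ with the strong convergence $\un \to \uquerJ$), then get $(\lhat,\uquerJ) \leq 0$ from the subgradient inequality for $\phat$ together with this identity, and finally positivity of $\lhat$ yields $(\lhat,\uquerJ)=0$, from which $(\phat,\uquerJ)=\reg(\uquerJ)$ falls out. Both routes rest on the same ingredients (the complementarity relations~\eqref{complementarity}, $\reg(\un)\to\reg(\uquerJ)$, and a weak-$^*$/strong pairing argument), and both deliver the conclusion for every $\reg$-minimising solution. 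What the paper's ordering buys is the stronger pre-limit statement $(p_n,\uquerJ)\to\reg(\uquerJ)$, which it reuses later (Proposition~\ref{uquer in Mn} in the debiasing section); your version stays entirely in the limit and is marginally more economical, but does not produce that intermediate fact. One small caveat: your closed-graph justification of $L^\infty \hookrightarrow \V^*$ presupposes that $(\V,\reg)$ is complete, which you do not verify; the paper silently assumes the same embedding, and in the case of interest $\reg(\cdot) = \TV(\cdot) + \gamma\norm{\cdot}_1$ it is immediate, since $\norm{u}_1 \leq \gamma^{-1}\reg(u)$.
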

 \begin{proof}
Since the dual of a norm is the characteristic function of the unit ball in the dual norm~\cite{Borwein_Zhu}, we have that 
\begin{equation*}
\dJ_\V(0) = \{p \in \V^* \colon \norm{p}_{\V^*} \leq 1\}.
\end{equation*}
\noindent By the Banach-Alaoglu theorem we get weak-$^*$ convergence of a subsequence $p_n \wsto \phat$ in $\V^*$.
 Weak-$^*$ convergence $\Bnsmun \wsto \exB^* \muhat$ in $L^\infty$ (and, therefore, in $\V^*$) implies that $\lambda_n \wsto \lhat = \phat - \exB^* \muhat$ in $\V^*$.

To study the properties of $\phat$, we make the following observation:
\begin{eqnarray*}
&&(p_n, \un) = (\lambda_n, \un) - (\Bnsmun, \un) = (\mun, -B_n \un) = (\mun, -\phi_n), \\
&&(p_n,\uquerJ) = (\lambda_n, \uquerJ) - (\Bnsmun, \uquerJ) \geq (\mun,-B_n \uquerJ) \geq (\mun, -\phi_n) = (p_n, \un)
\end{eqnarray*}
\noindent for any $\reg$-minimising solution $\uquerJ$. 
 (Note that the term $(\lambda_n, \un)$ in the first line vanishes by Proposition~\ref{prop:complementarity}).
 Therefore, we have that
\begin{equation*}
0 \leq \reg(\uquerJ) - (p_n,\uquerJ) \leq  \reg(\uquerJ) - (p_n,\un) =  \reg(\uquerJ) - \reg(\un) \to 0.
\end{equation*}
\noindent Hence, we get that $(p_n,\uquerJ) \to \reg(\uquerJ)$.
 Combining this with $(p_n,\uquerJ) \to (\phat,\uquerJ)$ (since $\uquerJ \in \V$), we get that $(\phat,\uquerJ) = \reg(\uquerJ)$ and $\phat \in \dJ_{\V^*}(\uquerJ)$ (the condition $\phat \in \dJ_{\V^*}(0)$ follows from weak-$^*$ closedness of the unit ball in $\V^*$).

Clearly, $(\lhat,u) \geq 0$ for all $u \in \V$, $u \geq 0$.
 Noting that 
\begin{equation}
\reg(\un) = (p_n,\un) = (\lambda_n,\un) - (\Bnsmun,\un) = (-\Bnsmun,\un) \to (-\exB^* \muhat, \uquerJ),
\end{equation}
\noindent we conclude that $(-\exB^* \muhat, \uquerJ) = \reg(\uquerJ)$.
 Combining this with $(\phat,\uquerJ) = \reg(\uquerJ)$, we get that $(\lhat,\uquerJ) = 0$.
\end{proof}

%%%%%%%%%%%%%%%%%%%%%%%%%%%%%%%%%%%%%%%%%%
\subsection{Convergence rates}\label{sec:conv_rate}
%\hl{SUMMARY: Convergence rates in Bregman distance.}

The results of the previous sections allow us to obtain convergence rates of $\un \to \uquerJ$ in terms of the (generalised) Bregman distance~\cite{Burger_Osher:2004}.
 Indeed, consider the symmetric Bregman distance, which for absolutely one-homogeneous functionals can be written as follows
\begin{equation*}
D_\reg^{symm}(\uquerJ,\un) = D_\reg^{p_n}(\uquerJ,\un) + D_\reg^{\phat}(\un,\uquerJ) = (p_n - \phat, \un - \uquerJ).
\end{equation*}

\begin{theorem}
Under the assumptions of Theorem~\ref{thm:mun_bounded} the following estimate holds for any $\reg$-minimising solution $\uquerJ$:
\begin{equation}\label{conv_rate}
D_\reg^{symm}(\uquerJ,\un) \leq C \norm{\muquer}_1 \cdot \eps_n.
\end{equation}
\end{theorem}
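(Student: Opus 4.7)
The strategy is to choose the subgradient $\phat$ in the symmetric Bregman distance using the source condition. By Assumption~\ref{ass_2} we may take $\phat := -\exB^*\muquer \in \dJ(\uquerJ)$, and by Proposition~\ref{prop:complementarity} we have $p_n = \lambda_n - B_n^*\mu_n \in \dJ(\un)$. Substituting these into the identity
\begin{equation*}
D_\reg^{symm}(\uquerJ,\un) = (p_n - \phat, \un - \uquerJ)
\end{equation*}
and expanding the duality pairings, the plan is to use the complementarity conditions~\eqref{complementarity} together with the feasibility relations $B_n\uquerJ \leq \phi_n$ (valid since every $\reg$-minimising solution $\uquerJ$ is feasible in every $n$-th primal) and $\exB\uquerJ = \exphi$, then to discard all terms whose sign is favourable.

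Carrying out this expansion, one uses $(\lambda_n,\un)=0$ and $(\mu_n,B_n\un) = (\mu_n,\phi_n)$ from complementarity, and $(\muquer,\exB\uquerJ) = (\muquer,\exphi)$. The two remaining negative terms $-(\lambda_n,\uquerJ) \leq 0$ (nonnegativity of $\lambda_n$ and $\uquerJ$) and $(\mu_n, B_n\uquerJ - \phi_n) \leq 0$ ($\mu_n \geq 0$ and feasibility of $\uquerJ$) can be dropped, leaving the clean bound
\begin{equation*}
D_\reg^{symm}(\uquerJ,\un) \leq (\muquer,\, \exB\un - \exphi).
\end{equation*}
Now insert the telescoping decomposition
\begin{equation*}
\exB\un - \exphi = (\exB - B_n)\un + (B_n\un - \phi_n) + (\phi_n - \exphi).
\end{equation*}
The middle term pairs nonpositively against $\muquer \geq 0$ by feasibility of $\un$; the third obeys $(\muquer, \phi_n-\exphi) \leq C_0 \eps_n \norm{\muquer}_1$ by~\eqref{conv_phin}; and the first is bounded by $\norm{\muquer}_1 \cdot \eta_n \cdot \norm{\un}_1$ using~\eqref{conv_Bn}. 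Combining these with $\eta_n \leq D_0 \eps_n$ (asymptotically, from~\eqref{etan/epsn}) yields the claim with constant $C \sim C_0 + C' D_0$.

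The main delicate point is the uniform bound $\norm{\un}_1 \leq C'$ needed to convert the operator error into an $\eps_n$-rate; everything else is algebraic manipulation built from one-homogeneity and complementarity. Since $\reg(\un) \leq \reg(\uquerJ)$ (from feasibility of $\uquerJ$ in~\eqref{primal_n}), the iterates lie in a fixed sub-level set of $\reg$, which is compact — hence bounded in $L^1$ — under the hypotheses of Theorem~\ref{thm:convergence}; alternatively one can invoke the bound from Section~\ref{boundedness_of_u} directly. This is the only step that reaches outside the self-contained duality machinery used up to Theorem~\ref{thm:mun_bounded}.
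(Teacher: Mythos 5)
Your proof is correct, but it takes a genuinely different route from the paper's. The paper instantiates the symmetric Bregman distance with $\phat = \lhat - \exB^*\muhat$, the weak-$^*$ limit of the computed dual variables; its estimate therefore runs through the whole dual-stability machinery: Theorem~\ref{thm:mun_bounded} (to get $\norm{\muhat}_1 \leq C\norm{\muquer}_1$), Proposition~\ref{prop:mun_conv} (weak-$^*$ convergence of $\mun$), and Theorem~\ref{thm:conv_subgrad_BV*} (to obtain $(\lhat,\uquerJ)=0$ and $(\lhat,\un)\geq 0$, which in turn requires $\reg$ to be a norm on $\V$). You instead pair directly against the source element, taking $\phat = -\exB^*\muquer \in \dJ(\uquerJ)$, so the terms involving $\lhat$ and $\muhat$ never appear: your argument uses only complementarity (Proposition~\ref{prop:complementarity}), feasibility of $\uquerJ$ and $\un$, the source condition, and Assumption~\ref{ass_1} --- in particular it needs neither Assumption~\ref{ass_3} nor any boundedness or convergence of the Lagrange multipliers $\mun$, $\lambdan$. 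This is the classical Burger--Osher style argument and is shorter and valid under strictly weaker hypotheses; what the paper's heavier route buys is that its estimate is stated for the subgradient $\phat$ that is actually attained as the limit of the dual solutions, which is the object reused in the level-set and structure analysis of Section~\ref{level_sets}. Two caveats: (i) since the symmetric Bregman distance depends on the choice of subgradients, you are strictly speaking bounding a different (but equally legitimate, and arguably more standard) instance of $D^{symm}_\reg(\uquerJ,\un)$ than the one the paper fixes just before the theorem; (ii) both proofs need $\norm{\un}_1$ uniformly bounded to convert the operator error $\eta_n$ into an $\eps_n$-rate --- the paper uses this silently in its last display, whereas you flag it and justify it via compactness of the sub-level sets of $\reg$ (or Section~\ref{boundedness_of_u}), which is the more careful treatment.
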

\begin{proof}
We obtain the following estimate for the symmetric Bregman distance:
\begin{align*}
(p_n - \phat, \un - \uquerJ) &= (\lambda_n,\un) - (\lambda_n,\uquerJ) - (\lhat,\un) + (\lhat,\uquerJ) \\
 & - (\mun,B_n \un) + (\mun, B_n \uquerJ) + (\muhat, \exB \un) - (\muhat, \exB \uquerJ)  \\
 &\leq - (\mun,B_n \un) + (\mun, B_n \uquerJ) + (\muhat, \exB \un) - (\muhat, \exB \uquerJ),
 \end{align*}
 \noindent since $(\lambda_n,\un) = 0$ (Proposition~\ref{prop:complementarity}), $(\lambda_n,\uquerJ) \geq 0$, $(\lhat,\un) \geq 0$, $(\lhat,\uquerJ) = 0$ (Theorem~\ref{thm:conv_subgrad_BV*}). Using the fact that $(\mun,B_n \un) = (\mun,\phi_n)$ and $B_n \uquerJ \leq \phi_n$, we note that
 \begin{equation*}
 - (\mun,B_n \un) + (\mun, B_n \uquerJ) = (\mun, B_n \uquerJ - \phi_n) \leq 0
 \end{equation*} 
 \noindent and, therefore,
 \begin{align*}
(p_n - \phat, \un - \uquerJ) & \leq (\muhat, \exB \un - \exphi) = (\muhat, B_n \un - \exphi) + (\muhat, (\exB - B_n) \un) \\
& \leq (\muhat, \phi_n - \exphi) + (\muhat, (\exB - B_n) \un).
\end{align*}
\noindent The last inequality is due to the fact that $B_n \un \leq \phi_n$. Using Assumption~\ref{ass_1} and the fact that $\norm{\muhat}_1 \leq C\norm{\muquer}_1$ (Theorem~\ref{thm:mun_bounded} and Proposition~\ref{prop:mun_conv}), we finally obtain the required estimate
\begin{align*}
(p_n - \phat, \un - \uquerJ) & \leq (\muhat, \phi_n - \exphi) + (\muhat, (\exB - B_n) \un) \leq C \cdot \eps_n \cdot (\muhat,\one) + \norm{\muhat}_1 \norm{\exB-B_n} \norm{\un}_1 \\
& \leq C \norm{\muquer}_1 \cdot \eps_n \left(1 + C \frac{\eta_n}{\eps_n}\right) \leq C \norm{\muquer}_1 \cdot \eps_n.
\end{align*}
%\noindent where we used Assumption~\ref{ass_1}  and the relations $(\mun,B_n \un) = (\mun,\phi_n)$, $B_n \un \leq \phi_n$, $B_n \uquerJ \leq \phi_n$, $\exB \uquerJ = \exphi$ and $\norm{\muhat}_1 \leq C\norm{\muquer}_1$.
\end{proof}

Not surprisingly, the convergence rate only depends on the convergence of the data, since we assumed that the operator converges at least at the same rate (Assumption~\ref{ass_1}).

\begin{remark}
The estimate~\eqref{conv_rate} is consistent with existing theory for inverse problems with exact forward operators.
 If $B_n \equiv \exB$, the constraint $\exB u \leq \phi_n$ is essentially a bound on the (perhaps, weighted) $L^\infty$ norm of $Au - f$ for $f = \frac{f^u_n+f^l_n}{2}$.
 The case when the fidelity function is a characteristic function of the set $\{u \colon \norm{Au-f }\leq \delta\}$ was studied in~\cite[Thm 5.1]{Benning_Burger_general_fid_2011}, where the authors obtained the same convergence rate as~\eqref{conv_rate}.
\end{remark}

%%%%%%%%%%%%%%%%%%%%%%%%%%%%%%%%%%%%%%%%%%
%%%%%%%%%%%%%%%%%%%%%%%%%%%%%%%%%%%%%%%%%%
%%%%%%%%%%%%%%%%%%%%%%%%%%%%%%%%%%%%%%%%%%
\section{Convergence of the level sets of $\un$}\label{level_sets}
Our goal in this section is to understand the structure of the minimisers $\un$ in the case of $\TV$-based regularisation.
 In particular, we want to know whether the level sets of $\un$ converge to those of $\uquerJ$, where $\uquerJ$ is the $\reg$-minimal solution of~\eqref{Au=f}, to which $\un$ converges.
%show that regularisation using total variation leads to piecewise-constant reconstructions in the partial-order-based framework, if \hl{all $\reg$-minimising solutions $\uquerJ$ are piecewise-constant.} 
In this section we consider $\reg(\cdot) = \TV(\cdot) + \gamma \norm{\cdot}_1$, where $\gamma$ is a small constant (recall that $\reg(\cdot) = \TV(\cdot)$ does not satisfy Assumption~\ref{ass_3}).
 We follow~\cite{Peyre:2017} and~\cite{iglesias_mercier_scherzer:2018}, where the authors proved Hausdorff convergence of the level sets of solutions of the ROF model~\cite{ROF} (for denoising in~\cite{Peyre:2017} and for general linear inverse problems in~\cite{iglesias_mercier_scherzer:2018}) to those of $\uquerJ$.
 In particular, if $\uquerJ$ is piecewise-constant, the authors of~\cite{Peyre:2017} conclude that the reconstructions are piecewise-constant outside the so-called extended support of the gradient of $\uquerJ$ in the low noise regime.

Our case requires several adjustments of the proofs in~\cite{Peyre:2017}.
 First,~\cite{Peyre:2017} considers $\reg(\cdot)=\TV(\cdot)$, while we need to consider $\reg(\cdot) = \TV(\cdot) + \gamma \norm{\cdot}_1$.
 Therefore, instead of considering sets satisfying $P(E) = \int_E p_n$ we need to consider sets satisfying $P(E) + \gamma \abs{E} = \int_E p_n$.
 Therefore, the level sets of $\un$ (as defined in~\cite{Peyre:2017}) solve the following optimisation problem (instead of the prescribed mean curvature problem):
\begin{equation}\label{Peyre:22}
\min_{X \subset \Omega} P(X) + \gamma\abs{X} - \int_X p_n.
\end{equation}
\noindent (Note that the case with the opposite sign of the integral does not occur since $\un \geq 0$).

Denote by $\Ent$ (or by $\En$, where this will cause no confusion) the level sets of $\un$.
 To prove that $\En$ indeed solves problem~\eqref{Peyre:22} we note that, since
\begin{equation*}
\int_\Omega gv \, dx = \int_\Omega \left( \int_0^\infty \one_{g(x) \geq t} v(x) \, dt \right) \, dx
\end{equation*}
\noindent for any functions $g,v$ such that $g \geq 0$, we get that 
\begin{eqnarray*}
&& (p_n,\un) = \int_0^\infty \left(\int_\Ent p_n\right) \,dt, \\
&& \norm{u_n}_1 = (u_n,\one) = \int_0^\infty \left(\int_\Ent \one\right) \,dt
\end{eqnarray*}
\noindent and $\TV(\un) = \int_0^\infty (\int_\Ent \TV(\one_\Ent)) \,dt$ by the coarea formula~\cite{Peyre:2017}.
 Combining this with $\reg(\un) = (p_n,\un)$, we get that
\begin{equation*}
\int_0^\infty \left( P(\Ent) + \gamma \abs{\Ent} - \int_\Ent p_n \right) \, dt = 0.
\end{equation*}
\noindent Since $p_n \in \dJ(0)$, the expression in the outer integral is non-negative and we get the desired equality
\begin{equation}\label{Per+Area=int_p}
P(\En) + \gamma \abs{\En} = \int_\En p_n.
\end{equation}
\noindent Since the objective in~\eqref{Peyre:22} is non-negative, $\En$ indeed solves~\eqref{Peyre:22}.

To prove Hausdorff convergence of the level sets of $\un$ to those of $\uquerJ$ along the lines of~\cite{Peyre:2017}, we need to prove Lemma 2 and Proposition 8 (following the notation of the arXiv version of the paper).
 The proofs in~\cite{Peyre:2017} rely on strong $L^2$ convergence of the subgradients, which we don't have in our case.
 However, weak-$^*$ convergence of $\Bnsmun$ in $L^\infty$ along with some orthogonality properties of $\lambda_n$ will be enough to obtain similar results, as we shall see.

Before we proceed with the proofs, let us note that for any level set $\Ent$, $t > 0$, the following inequality holds: $\one_\Ent \leq \frac{1}{t} \un$.
 Therefore, $0 \leq (\lambda_n,\one_\Ent) \leq \frac{1}{t} (\lambda_n,\un) = 0$.

\begin{lemma}(Lemma 2 in~\cite{Peyre:2017})
The level sets $\En$ have a finite perimeter and area.
\end{lemma}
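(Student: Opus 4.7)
The plan is to read off both the area and perimeter bounds directly from the identity~\eqref{Per+Area=int_p}, combined with the orthogonality $(\lambda_n, \one_{\Ent}) = 0$ noted in the paragraph just above the lemma, and the uniform $L^\infty$-boundedness of $\Bnsmun$ established in Section~\ref{boundedness_of_mun}.

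First, the area bound is immediate: since $\Ent \subset \Omega$ and $\Omega$ is a bounded domain, $|\Ent| \leq |\Omega| < \infty$, and no further work is required.

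For the perimeter, the key step is to split $p_n = \lambda_n - \Bnsmun$ and invoke the orthogonality $(\lambda_n, \one_{\Ent}) = 0$ (itself a consequence of the pointwise bound $\one_{\Ent} \leq \tfrac{1}{t}\un$ together with $(\lambda_n, \un) = 0$ from Proposition~\ref{prop:complementarity}), which eliminates the $\lambda_n$-contribution. The remainder is then controlled in $L^\infty$:
\begin{equation*}
\int_{\Ent} p_n = -\int_{\Ent} \Bnsmun \leq \|\Bnsmun\|_\infty \cdot |\Ent| \leq C \, |\Omega|,
\end{equation*}
where we use that $\|\mun\|_{(L^\infty)^*}$ is uniformly bounded in $n$ (Theorem~\ref{thm:mun_bounded}) together with the uniform bound $\|B_n\|_{L^1 \to L^\infty} \leq \|\exB\| + \eta_n$ coming from Assumption~\ref{ass_1}, which transfers to a uniform bound on $B_n^*\colon (L^\infty)^* \to L^\infty$. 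Substituting back into~\eqref{Per+Area=int_p} and discarding the non-negative term $\gamma |\Ent|$ then yields $P(\Ent) \leq C\,|\Omega| < \infty$.

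I do not expect a genuine obstacle: the lemma is essentially a bookkeeping consequence of~\eqref{Per+Area=int_p} and the uniform estimates from Section~\ref{boundedness_of_mun}. The only conceptual point worth emphasising is that the $\lambda_n$-component of $p_n$, which is only controlled in $L^1$, is killed by the orthogonality to $\one_{\Ent}$, leaving only the $L^\infty$-bounded piece $\Bnsmun$ to estimate; a direct bound on $\|p_n\|_\infty$ would not be available, so this orthogonality is the one structural ingredient that genuinely has to be used.
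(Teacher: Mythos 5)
Your proposal is correct and follows essentially the same route as the paper: the area bound from $\En \subset \Omega$, and the perimeter bound obtained from~\eqref{Per+Area=int_p} by eliminating the $\lambda_n$-term via the orthogonality $(\lambda_n,\one_\En)=0$ and estimating the remaining term by $\norm{\Bnsmun}_\infty \abs{\En}$. Your additional remark that only the $\Bnsmun$-piece admits an $L^\infty$ bound (while $\lambda_n$ is controlled only in $L^1$) is exactly the structural point the paper's argument relies on.
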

\begin{proof}
Since $\En \subset \Omega$ and we assumed that $\Omega$ is bounded, finiteness of the area of $\En$ is trivial.
 For the perimeter $P(\En) = \TV(\one_\En)$ we obtain the following estimate using~\eqref{Per+Area=int_p}
\begin{equation*}
P(\En) \leq \reg(\one_\En) = (p_n,\one_\En) = (\lambda_n,\one_\En) - (\Bnsmun,\one_\En) = (-\Bnsmun,\one_\En) \leq \norm{\Bnsmun}_\infty \abs{\En} \leq C.
\end{equation*}
\end{proof}

\begin{proposition}(Proposition 8 in~\cite{Peyre:2017})
$\exists r_0 > 0$ such that $\forall r \in (0,r_0]$, $\forall \En$ and $\forall x \in \d \En$ the following estimates hold:
\begin{equation*}
\frac{\abs{B(x,r) \setplus \En}}{\abs{B(x,r)}} \geq C, \quad \frac{\abs{B(x,r)\setminus \En}}{\abs{B(x,r)}} \geq C, \quad C>0,
\end{equation*}
\noindent where $B(x,r)$ denotes a ball of radius $r$ centered at $x$.
\end{proposition}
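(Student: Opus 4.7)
I plan to adapt the classical De Giorgi--Chambolle density estimate for quasi-minimizers of the perimeter to the variational problem~\eqref{Peyre:22}. Because $\En$ minimizes $P(X)+\gamma|X|-\int_X p_n$ among measurable $X\subset\Omega$, the inner density bound comes from testing with the competitor $\En\setminus B(x,r)$ and the outer density bound from $\En\cup B(x,r)$. Using the standard Federer-type decomposition, valid for $\mathcal{H}^1$-a.e.\ $r$,
\[
P(\En\setminus B(x,r)) = P(\En;\Omega\setminus \bar B(x,r)) + \mathcal{H}^{d-1}(\partial B(x,r)\cap\En),
\]
the first comparison rearranges to
\[
P(\En;B(x,r)) + \gamma\,m(r) \leq \mathcal{H}^{d-1}(\partial B(x,r)\cap \En) + \int_{\En\cap B(x,r)} p_n,
\]
where $m(r) \defeq |\En\cap B(x,r)|$; the symmetric computation yields the analogue with $M(r) \defeq |B(x,r)\setminus \En|$ and $-\int_{B(x,r)\setminus \En} p_n$ on the right-hand side.

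The delicate step is to bound the forcing terms $\pm\int p_n$ despite $p_n=\lambda_n-\Bnsmun$ being only $L^1$-bounded (since $\lambda_n$ lives only in $L^1$; cf.\ Section~\ref{sec:boundedness_lambdan}). For the first inequality the integration domain $\En\cap B(x,r)$ is contained in $\En$, so the observation $(\lambda_n,\one_{\Ent})=0$ recalled just before the preceding lemma, together with $\lambda_n\geq 0$, forces $\int_{\En\cap B(x,r)}\lambda_n=0$ and hence
\[
\int_{\En\cap B(x,r)} p_n \;=\; -\!\int_{\En\cap B(x,r)} \Bnsmun \;\leq\; \|\Bnsmun\|_\infty\, m(r) \;\leq\; C\, m(r),
\]
using the uniform $L^\infty$-bound on $\Bnsmun$ from the Corollary after Theorem~\ref{thm:mun_bounded}. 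For the competitor $\En\cup B(x,r)$, the $\lambda_n$-contribution appears with a favourable sign (as $-\int_{B(x,r)\setminus \En} \lambda_n \leq 0$) and is simply discarded, giving the matching bound $C\,M(r)$. This is the step where our framework genuinely differs from~\cite{Peyre:2017}, which relies on strong $L^2$ convergence of subgradients that we do not have.

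The proof then closes by a standard differential-inequality argument. With the resulting perimeter bounds $P(\En;B(x,r)) \leq m'(r) + C\,m(r)$ and $P(\En;B(x,r)) \leq M'(r) + C\,M(r)$ (where $m'(r)=\mathcal{H}^{d-1}(\partial B(x,r)\cap\En)$ a.e., and similarly for $M$), combining with $P(\En\cap B(x,r)) \leq P(\En;B(x,r)) + m'(r)$ and the relative isoperimetric inequality on the ball yields $m(r)^{(d-1)/d} \leq C_1 m'(r) + C_2 m(r)$, and identically for $M$. Choosing $r_0$ small enough (depending only on $C_2$, $d$ and $\dist(x,\partial\Omega)$) so that $C_2\, m(r) \leq \tfrac12 m(r)^{(d-1)/d}$ whenever $r\leq r_0$, the inequality collapses to $(m^{1/d})'\geq c>0$, and integration from $0$ to $r$ gives $m(r)\geq c\,r^d$, which is exactly the claimed inner density; the argument for $M$ is verbatim. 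The main difficulty, as noted, is the absence of a uniform $L^\infty$ bound on $p_n$, and it is the orthogonality $(\lambda_n,\one_{\En})=0$ combined with the sign constraint $\lambda_n\geq 0$ that resolves it by eliminating the problematic summand in each comparison.
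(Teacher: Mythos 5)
Your proof is correct and follows the same overall strategy as the paper's: identical competitors $\En\setminus B(x,r)$ and $\En\cup B(x,r)$ in \eqref{Peyre:22}, the same key device of eliminating the $\lambda_n$-contribution via the orthogonality $(\lambda_n,\one_{\En})=0$ combined with $\lambda_n\geq 0$ (so that only the uniform $L^\infty$-bound on $\Bnsmun$ enters), and the same isoperimetric-plus-differential-inequality closing. Two differences are worth recording, both in your favour. First, you work in general dimension $d$ with the a.e.-$r$ Federer decomposition made explicit, whereas the paper argues in two dimensions ($\Hausdorff^1$, the constant $\sqrt{4\pi}$) and compresses this step into ``geometric considerations yield''. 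Second, and more substantively, the closing step differs: the paper bounds $g(r)\leq\sqrt{g(r)}$ for small $r$, which leaves the forcing constant $\tilde C-\gamma$ in direct competition with the isoperimetric constant and therefore requires $\sqrt{4\pi}-\tilde C+\gamma>0$ --- this is precisely why the paper must invoke smallness of the constants $C_0$ and $D_0$ from Assumption~\ref{ass_1}. Your absorption step, $C_2\,m(r)\leq\tfrac12\,m(r)^{(d-1)/d}$ for $r\leq r_0$, exploits instead that the forcing term is of higher order in $r$ than the isoperimetric term, so no smallness of $\tilde C$ (hence no side condition on $C_0$, $D_0$) is needed; only $r_0$ shrinks with the constants. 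This makes the density bounds unconditional, a genuine sharpening of the proposition as proved in the paper. (One shared loose end: for points near $\partial\Omega$ the competitor $\En\cup B(x,r)$ need not be admissible in \eqref{Peyre:22}, since the minimisation is over $X\subset\Omega$; your $\dist(x,\partial\Omega)$-dependent choice of $r_0$ gestures at this, while the paper passes over it silently.)
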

\begin{proof}
Due to the optimality of $\En$ in problem~\eqref{Peyre:22}, we get that
\begin{equation*}
P(\En) + \gamma\abs{\En} - \int_\En p_n \leq P(\En \setminus B(x,r)) + \gamma \abs{\En \setminus B(x,r)} - \int_{\En \setminus B(x,r)} p_n,
\end{equation*}
\noindent which implies the following estimate:
\begin{equation*}
P(\En) - \int_\En p_n \leq P(\En \setminus B(x,r)) - \int_{\En \setminus B(x,r)} p_n - \gamma \abs{\En \cap B(x,r)}.
\end{equation*}

Geometric considerations yield:
\begin{equation*}
P(\En \cap B(x,r)) \leq \int_{\En \cap B(x,r)} p_n - \gamma \abs{\En \cap B(x,r)} + 2 \Hausdorff^1 (\d B(x,r) \cap \En).
\end{equation*}

For the first term on the left hand side we get the following estimate:
\begin{multline*}
\int_{\En \cap B(x,r)} p_n = (p_n,\one_{\En \cap B(x,r)}) = (\lambda_n,\one_{\En \cap B(x,r)}) - (\Bnsmun,\one_{\En \cap B(x,r)}) \\
\leq \norm{\Bnsmun}_\infty \cdot \norm{\one_{\En \cap B(x,r)}}_1 \leq \tilde C\abs{\En \cap B(x,r)}.
\end{multline*}

The isoperimetric inequality~\cite{Peyre:2017} yields:
\begin{equation*}
\sqrt{4\pi} \abs{\En \cap B(x,r)}^{1/2} \leq P(\En \cap B(x,r))
\end{equation*}

Denote $g(r) \defeq \abs{\En \cap B(x,r)}$.
 Then $g'(r) = \Hausdorff^1 (\d B(x,r) \cap \En)$ we get the following inequality:
\begin{equation*}
\sqrt{4\pi} \sqrt{g(r)} \leq 2 g'(r) + (\tilde C - \gamma) g(r).
\end{equation*}

Since $g(r) \to 0$ as $r \to 0$, for small $r$ we have that $g(r) \leq \sqrt{g(r)}$ and, therefore, 
\begin{equation*}
(\sqrt{4\pi} - \tilde C + \gamma) \leq \frac{d}{dr} \sqrt{g(r)}.
\end{equation*}

%Since $\tilde C$ can be chosen arbitrary close to the constant $C_0$ from Assumption~\ref{ass_1} for large enough $n$ (Theorem~\ref{thm:mun_bounded}), the constant on the left hand side is positive if the constant $C_0$ from Assumption~\ref{ass_1} is small enough.
 
If the constants $C_0$ and $D_0$ from Assumption~\ref{ass_1} are small enough, the constant on the left hand side is positive and, integrating, we get that
\begin{equation*}
r(\sqrt{4\pi} - \tilde C + \gamma)  \leq \sqrt{g(r)}
\end{equation*}
\noindent and
\begin{equation*}
\frac{\abs{B(x,r) \setplus \En}}{\abs{B(x,r)}} \geq\frac{\left(\sqrt{4\pi} - \tilde C + \gamma \right)^2}{\pi}.
\end{equation*}

Comparing $\En$ with $\En \setplus B(x,r)$ in problem~\eqref{Peyre:22}, we get in a similar way the estimate
\begin{equation*}
\frac{\abs{B(x,r) \setminus \En}}{\abs{B(x,r)}} \geq\frac{\left(\sqrt{4\pi} - \tilde C -\gamma \right)^2}{\pi}.
\end{equation*}
\end{proof}

These results are sufficient to show Hausdorff convergence of the level sets of $\un$ to those of $\uquerJ$~\cite[Thm 1]{Peyre:2017},~\cite[Thm 2]{iglesias_mercier_scherzer:2018}.
 Similarly to Theorem~1 in~\cite{Peyre:2017}, one can also show that $\En \to E$ in the sense that $\lim_{n \to \infty} \abs{E \bigtriangleup \En} = 0$ and $P(E) + \gamma \abs{E} = \int_E \phat$.
 Indeed, passing to the limit in $\reg(\one_\En) = (p_n,\one_\En)$, we get that
\begin{multline*}
P(E) + \gamma \abs{E} \leq \liminf_{n \to \infty} P(\En) + \gamma \abs{\En} = \lim_{n \to \infty} (p_n,\one_\En) = \lim_{n \to \infty} (\lambda_n -\Bnsmun,\one_\En) \\
= \lim_{n \to \infty} (-\Bnsmun,\one_\En) = (-\exB^* \muhat,\one_E) = (\lhat-\exB^* \muhat,\one_E) = (\phat,\one_E)
\end{multline*}
\noindent due to $L^1$ convergence of $\one_\En \to \one_E$ and weak-$^*$ convergence of $\Bnsmun \wsto \exB^* \muhat$ in $L^\infty$.
 Therefore, $\TV(\one_E) < +\infty$ and $\one_E \in \BV$.
 Since $\phat \in \dJ_{\BV}(0)$, we get that $\reg(\one_E) \geq (\phat,\one_E)$ and, therefore, $(\phat,\one_E) = P(E) + \gamma \abs{E}$.

\begin{remark}
From $(\phat,\one_E) = \reg(\one_E)$ the authors of~\cite{Peyre:2017} conclude that $\d E$ is in the extended support of the gradient of $\uquerJ$.
 We can make a similar connection in the case $\frac{\eta_n}{\eps_n} \to 0$, when $\mun$ converges to a minimum-norm certificate (see Remark~\ref{min_norm_certificate}).
 However, due to non-uniqueness of the minimum norm certificate in our case, the definition of the extended support needs to be amended.
 We consider all $u$, whose subgradient contains a minimum-norm certificate $(\lmin,\mumin)$ solving~\eqref{eq:mumin}:
\begin{equation}
\Ext(\uquerJ) = \overline {\bigcup \{ \supp \abs{Du} \mid u \colon \exists (\lmin,\mumin) \text{ s.t. } \lmin - \exB^*\mumin \in \dJ(u) \}}.
\end{equation}
\end{remark}

%%%%%%%%%%%%%%%%%%%%%%%%%%%%%%%%%%%%%%%%%%
%%%%%%%%%%%%%%%%%%%%%%%%%%%%%%%%%%%%%%%%%%
%%%%%%%%%%%%%%%%%%%%%%%%%%%%%%%%%%%%%%%%%%
\section{Debiasing and error estimation}\label{debiasing_and_error_est}
Two-step debiasing~\cite{Burger_Rasch_debiasing, Deladelle1, Deladelle2} aims at removing systematic bias in variational regularisation (such as loss of contrast with $\TV$) by solving an additional optimisation problem on the so-called model manifold defined as follows:
\begin{equation*}
\Mcal  = \{u \in L^1 \colon \reg(u) = (p_n,u)\},
\end{equation*}
\noindent where $p_n \in \dJ(u_n)$.
 The model manifold is the set of all elements of the solution space with zero Bregman distance to $\un$.
 In other words, it is the set of all elements sharing the subgradient with the approximate solution $\un$.
 Informally, the idea of two-step debiasing is that the approximate solution captures well the structure of the exact solution, such as the jump set in $\TV$-based regularisation, but is not perfect quantitatively due to a systematic bias introduced by the regulariser.
 This systematic bias is (partially) removed by optimising the fidelity term on the model manifold.

%%%%%%%%%%%%%%%%%%%%%%%%%%%%%%%%%%%%%%%%%%
\subsection{Debiasing and model manifolds}\label{debiasing}
%$\Mn$, boundedness of $\reg$ on $\Mn$, inclusion $\uquerJ \in \Mn$ for $n \geq n_0$. Can we drop $C$? 
%Idea, no data term to optimse, choice of $\tilde A$ and $\tilde f$, convergence of debiased solutions.

Our goal is to adapt the idea of debiasing to our specific setting.
 We assume that the first step, i.e. the solution of problem~\eqref{primal_n}, is already done and an approximate solution $\un$ is available along with the corresponding subgradient $p_n = \lambda_n - \Bnsmun$.

We slightly amend the definition of the model manifold for our specific setting. Fix some positive constants $\eps$ and $C$ and consider the following set:
\begin{equation}\label{Mn}
\Mn = \{u \geq 0 \colon B_n u \leq \phi_n, \,\, \reg(u) - (p_n,u) \leq \eps, \,\, (p_n,u-\un) \leq C  \}.
\end{equation}

We introduced two novel constraints as compared to the original feasible set in~\eqref{primal}.
 The inequality $\reg(u) - (p_n,u) \leq \eps$ is an upper bound\footnote{In the setting of~\cite{Burger_Rasch_debiasing} the Bregman distance is assumed to be zero, although the proposed numerical scheme allows some deviation.} on the Bregman distance between $\un$ and $u$.
 The condition $(p_n,u-\un) \leq C$ has a more technical nature and will be discussed in more detail in later (see Remark~\ref{drop_C}).

Next we examine some properties of the sets $\Mn$.
\begin{proposition}\label{uquer in Mn}
For sufficiently large $n$ any $\reg$-minimising solution $\uquerJ$ is an element of $\Mn$.
 
\end{proposition}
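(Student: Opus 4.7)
The plan is to check, for any $\reg$-minimising solution $\uquerJ$, the three conditions that define $\Mn$ separately: positivity and primal feasibility $B_n \uquerJ \leq \phi_n$; the Bregman-type bound $\reg(\uquerJ) - (p_n,\uquerJ) \leq \eps$; and the technical bound $(p_n,\uquerJ - \un) \leq C$. The first holds for all $n$, while the other two are consequences of the convergence results from Section~\ref{conv_analysis}.

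First I would observe that the primal feasibility is immediate, valid for every $n$, and does not rely on ``large $n$''. By definition $\uquerJ \geq 0$ and $\exB \uquerJ = \exphi$. Writing out $B_n \uquerJ \leq \phi_n$ componentwise, we need $A^l_n \uquerJ \leq f^u_n$ and $A^u_n \uquerJ \geq f^l_n$. Since $\uquerJ \geq 0$ and $A^l_n \leq A \leq A^u_n$, we have $A^l_n \uquerJ \leq A \uquerJ = f \leq f^u_n$ and analogously $A^u_n \uquerJ \geq f \geq f^l_n$, using the bounds~\eqref{bounds_f}--\eqref{bounds_A}.

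Next I would derive the two remaining inequalities from the key identities $(p_n,\un) = \reg(\un)$ and $(p_n,\uquerJ) \to \reg(\uquerJ)$. The first is a direct consequence of Proposition~\ref{prop:complementarity}: indeed $(p_n,\un) = (\lambda_n,\un) - (\Bnsmun,\un) = (\mun,-B_n\un) = (\mun,-\phi_n) = \reg(\un)$, where the last equality is strong duality. For the second, the chain of inequalities already established in the proof of Theorem~\ref{thm:conv_subgrad_BV*} gives
\begin{equation*}
0 \leq \reg(\uquerJ) - (p_n,\uquerJ) \leq \reg(\uquerJ) - (p_n,\un) = \reg(\uquerJ) - \reg(\un) \to 0,
\end{equation*}
where the convergence $\reg(\un) \to \reg(\uquerJ)$ comes from Theorem~\ref{thm:convergence}. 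This immediately yields $\reg(\uquerJ) - (p_n,\uquerJ) \leq \eps$ for $n$ sufficiently large, establishing the second defining condition.

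Finally, for the third condition I would write $(p_n,\uquerJ - \un) = (p_n,\uquerJ) - \reg(\un)$ and note that both terms converge to $\reg(\uquerJ)$, so $(p_n,\uquerJ - \un) \to 0$. Hence the inequality $(p_n,\uquerJ - \un) \leq C$ holds for all sufficiently large $n$ (for any fixed $C > 0$). I do not foresee any real obstacle here: the entire proof is essentially a bookkeeping exercise combining feasibility of $\uquerJ$ in the primal problem, the complementarity/strong-duality identity for $(p_n,\un)$, and the convergence $(p_n,\uquerJ) \to \reg(\uquerJ)$ extracted from the arguments already used in Theorem~\ref{thm:conv_subgrad_BV*}.
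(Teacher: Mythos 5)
Your proof is correct and follows essentially the same route as the paper: both rest on the facts that $(p_n,\un) = \reg(\un) \to \reg(\uquerJ)$ (strong duality plus Theorem~\ref{thm:convergence}) and $(p_n,\uquerJ) \to \reg(\uquerJ)$ (the sandwich argument from Theorem~\ref{thm:conv_subgrad_BV*}), from which the two $\Mn$-defining inequalities follow for large $n$. Your explicit verification of feasibility ($\uquerJ \geq 0$, $B_n \uquerJ \leq \phi_n$) is a detail the paper's proof omits only because it was already noted at the start of Section~\ref{conv_analysis}.
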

\begin{proof}
Since $(p_n,\uquerJ) \to \reg(\uquerJ)$ and $(p_n,\un) \to \reg(\uquerJ)$, we conclude that $(p_n,\uquerJ-\un) \leq C$ for sufficiently large $n$.
 Similarly, $\reg(\uquerJ) - (p_n,\uquerJ) = (\phat - p_n,\uquerJ) \to 0$ and therefore $\reg(\uquerJ) - (p_n,\uquerJ) \leq \eps$ for sufficiently large $n$.
\end{proof}

\begin{proposition}\label{v_n to uquer}
 $\reg(\cdot)$ is uniformly bounded on $\Mn$ and any sequence $v_n \in \Mn$ contains a subsequence (which we don't relabel) that strongly converges to a solution of~\eqref{Au=f} (not necessarily a $\reg$-minimising solution).
 %\hl{If the exact forward operator $A$ is injective, then $v_n \to \uquerJ$, where $\uquerJ$ is the (unique) soultion of}~\eqref{Au=f}.
 \end{proposition}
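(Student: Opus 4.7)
The plan is to first establish the uniform bound on $\reg$, then extract a convergent subsequence via the compactness properties of the sublevelsets of $\reg$ (cf.\ Theorem~\ref{thm:convergence}), and finally pass to the limit in the three defining constraints of $\Mn$.

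For the uniform bound on $\reg$, the key observation is that, by complementarity (Proposition~\ref{prop:complementarity}), $(p_n,\un) = \reg(\un)\leq \reg(\uquerJ)$, since $\un$ is feasible in~\eqref{primal_n} and $\uquerJ$ is $\reg$-minimising. For any $v\in\Mn$ the constraint $\reg(v)-(p_n,v)\leq\eps$ together with $(p_n,v-\un)\leq C$ yields
\begin{equation*}
\reg(v) \leq (p_n,v) + \eps = (p_n,v-\un) + (p_n,\un) + \eps \leq C + \reg(\uquerJ) + \eps,
\end{equation*}
which is independent of $n$ and $v$. This is precisely where the auxiliary constraint $(p_n,v-\un)\leq C$ in the definition of $\Mn$ is used; without it, $(p_n,v)$ would not be controllable a priori.

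Given a sequence $v_n\in\Mn$, the uniform bound $\reg(v_n)\leq C + \reg(\uquerJ)+\eps$ places all $v_n$ in a fixed sublevelset of $\reg$. Invoking the strong sequential compactness of sublevelsets (the same hypothesis used in Theorem~\ref{thm:convergence}), we extract a subsequence $v_n \to \uquer$ strongly in $L^1$. Strong $L^1$ convergence with $v_n\geq 0$ preserves non-negativity of the limit.

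It remains to show that $A\uquer=f$. Write the feasibility constraint $B_n v_n \leq \phi_n$ as $A^l_n v_n \leq f^u_n$ and $A^u_n v_n \geq f^l_n$. Decomposing $A^l_n v_n = A v_n + (A^l_n-A)v_n$, using $\|A^l_n - A\|_{L^1\to L^\infty}\leq \eta_n \to 0$ and the uniform $L^1$-boundedness of $v_n$ (which follows either from $\reg(v_n)$ being bounded together with the coercivity of $\reg$ in $L^1$, or, where applicable, from Section~\ref{boundedness_of_u}), we get $(A^l_n-A)v_n\to 0$ in $L^\infty$; similarly for $A^u_n$. Using continuity of $A\colon L^1\to L^\infty$ and the strong $L^1$ convergence $v_n\to\uquer$, we obtain $A v_n \to A\uquer$ in $L^\infty$. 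Since $f^u_n, f^l_n \to f$ in $L^\infty$, passing to the limit in both inequalities gives $A\uquer \leq f$ and $A\uquer\geq f$, so $A\uquer=f$ as claimed.

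The main technical point is the uniform $L^1$-boundedness needed to control $(A^l_n-A)v_n$ and $(A^u_n-A)v_n$: if $\reg$ itself dominates the $L^1$-norm (as in the weighted $\BV$ case $\reg(u)=\TV(u)+\gamma\|u\|_1$), this is immediate; otherwise one has to invoke the boundedness results of Section~\ref{boundedness_of_u}. No additional assumption on uniqueness of a $\reg$-minimising solution is needed, which is why the statement only asserts convergence to some solution of~\eqref{Au=f}.
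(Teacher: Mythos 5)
Your proof is correct, and its first half coincides with the paper's: the chain $\reg(v)\leq (p_n,v)+\eps \leq (p_n,\un)+C+\eps = \reg(\un)+C+\eps\leq \reg(\uquerJ)+C+\eps$ is exactly the estimate in the paper, followed by the same appeal to sequential compactness of the sub-levelsets of $\reg$. Where you diverge is the identification of the limit. The paper argues tersely that ``only those $u$ that solve~\eqref{Au=f} belong to all sets $\Mn$'', which implicitly exploits the monotonicity of the bounds in~\eqref{bounds_f}--\eqref{bounds_A}: the feasible sets $\{u\geq 0\colon B_n u\leq \phi_n\}$ are nested and closed, so the tail of the sequence (and hence its limit) lies in each of them, and their intersection is precisely the set of non-negative solutions of~\eqref{Au=f}. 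You instead pass to the limit directly in the constraints $A^l_n v_n\leq f^u_n$ and $A^u_n v_n\geq f^l_n$, splitting $A^l_n v_n = Av_n + (A^l_n - A)v_n$ and using the quantitative convergence $\eta_n\to 0$ from Assumption~\ref{ass_1} together with a uniform $L^1$ bound on $v_n$. This buys you an argument that does not need the nestedness of the bounds at all (only their convergence), and it is actually more explicit than the paper's one-line conclusion, which glosses over why the limit point inherits membership in the shrinking feasible sets. One simplification you could make: your hedge about ``coercivity of $\reg$'' versus Section~\ref{boundedness_of_u} is unnecessary, since the strong sequential compactness of the sub-levelset $\{u\colon \reg(u)\leq \reg(\uquerJ)+C+\eps\}$ already forces that set to be bounded in $L^1$ (an unbounded set cannot be sequentially compact), so the uniform $L^1$ bound on $v_n$ comes for free from the hypothesis you have already invoked.
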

 \begin{proof}
Indeed, $\forall u \in \Mn$ we have that
\begin{equation*}
\reg(u) \leq (p_n,u) + \eps \leq (p_n,\un) + C + \eps = \reg(\un) + C + \eps \leq \reg(\uquerJ) + C + \eps.
\end{equation*}
\noindent Since the sub-level sets of $\reg(\cdot)$ are sequentially compact, so are the sets $\Mn$ as closed subsets of a compact set.
 Therefore, we conclude that any sequence $v_n \in \Mn$ has a strongly convergent subsequence (that we don't relabel).
 Since only those $u$ that solve~\eqref{Au=f} belong to all sets $\Mn$ (for all $n$), we conclude that $v_n$ converges to a solution of~\eqref{Au=f}.
%\hl{If $A$ is injective, $\uquerJ$ is the only element belonging to $\Mn$ for all $n$  and we conclude that $v_n \to \uquerJ$.}
\end{proof}

\begin{remark}\label{drop_C}
Consider the expression $(p_n,u-\un)$ for some $u \geq 0$ such that $B_n u \leq \phi_n$.
 This expression is supposed to be bounded by a 'user-defined' constant $C$.
 Since $(p_n,\un) = \reg(\un) \leq \reg(\uquerJ)$, it is effectively an upper bound on $(p_n,u)$ on the feasible set.
 Consider the following estimate:
\begin{equation*}
(p_n,u) = (\lambda_n,u) - (\Bnsmun,u) \leq (\lambda_n,u) + \norm{\Bnsmun}_\infty \norm{u}_1.
\end{equation*}

If Assumption~\ref{ass_5} is satisfied, $\norm{u}_1$ is bounded and we get a bound $(p_n,u) \leq (\lambda_n,u) + C$, i.e., effectively, we only need an upper bound on $(\lambda_n,u)$ for all feasible $u$.
 Since $(\lambda_n,u_n) = 0$, we can drop the constant $C$ from the definition of $\Mn$ whenever $u_n>0$ a.e.

%If $\reg(\cdot) = \TV(\cdot) + \norm{\cdot}_1$, then $p_n$ is bounded in $\BV^*$ and, since $\Bnsmun$'s are bounded (in $\L^\infty$ and hence in $\BV^*$), so are $\lambda_n$'s.

%In finite dimensions, all subgradients of a one-homogenious functional are bounded~\cite{Burger_Gilboa_Moeller_Eckard_spectral}, therefore, since $\Bnsmun$ are bounded, so are $\lambda_n$. Therefore,
\end{remark}

Following~\cite{Burger_Rasch_debiasing}, to correct for the systematic bias of $\un$ we would need to optimise the data term on $\Mn$.
 However, since in our case the data term is the characteristic function of the set $\{u \geq 0 \colon B_n u \leq \phi_n\}$, optimising it on $\Mn$ does not make any sense (any element of $\Mn$ is a minimiser).
 A possible way around this would be to choose an operator $\tilde A_n \in [A^l_n,A^u_n]$ and a right-hand side $\tilde f_n \in [f^l_n, f^u_n]$, for example, $\tilde A_n = \frac{A^u_n + A^l_n}{2}$ and $\tilde f_n = \frac{f^u_n + f^l_n}{2}$, and optimise the discrepancy $\norm{\tilde A_n u - \tilde f_n}$ on $\Mn$.
 Convergence of the minimisers is guaranteed by Proposition~\ref{v_n to uquer} (since the sets $\Mn$ are closed, the minimisers also belong to $\Mn$ and, therefore, converge to a solution of~\eqref{Au=f}, possibly along a subsequence).
% to $\uquerJ$, possibly along a subsequence).

The choice of $\tilde A_n$ and $\tilde f_n$ depends on our additional assumptions about the nature of the errors in the operator and the data.
 For example, in the case of symmetric noise, the choice $\tilde A_n = \frac{A^u_n + A^l_n}{2}$ and $\tilde f_n = \frac{f^u_n + f^l_n}{2}$ is quite intuitive.

%%%%%%%%%%%%%%%%%%%%%%%%%%%%%%%%%%%%%%%%%%
\subsection{Pointwise error estimates in constant regions}\label{error_bars}
%Convergence of linear functionals.

Proposition~\ref{v_n to uquer} paves way for pointwise error estimates of $\TV$-regularised solutions in areas where the minimiser $\un$ is constant (by the results of Section~\ref{level_sets}, these areas converge to the areas where $\uquerJ$ is constant in the sense of Hausdorff convergence).
To obtain a meaningful result on the convergence of the pointwise bounds, we assume that the operator $A$ is injective and therefore the exact solution is unique (and will be denoted by $\uquer$).

We will make use of the following important property of the model manifold in case of $\TV$-based regularisation: as pointed out in~\cite{Burger_Rasch_debiasing}, the model manifold in the case $\reg(u) = \TV(u)$ contains all solutions that share the jump set with $\un$ (more precisely, they don't jump where $\un$ does not, but don't have to jump where $\un$ does). 
This is still valid in the case $\reg(u) = \TV(u) + \gamma \norm{u}_1$, as shown in Appendix~\ref{model_manifolds}.
 
\begin{theorem}[Pointwise error bars]
Suppose that
\begin{itemize}
\item $A$ is injective;
\item the exact solution $\uquer$ is piecewise-constant;
\item $\reg(u) = \TV(u) + \gamma \|u\|_1$, $\gamma = const >0$.
\end{itemize}
Denote any region where $\un$ is constant by $\omega_n$. Define $u^l_{\omega_n}$ and $u^u_{\omega_n}$ as follows
\begin{equation*}
u^l_n|_{\omega_n} = \frac{1}{\abs{\omega_n}}\min_{u \in \Mn} (u,\one_{\omega_n}), \quad u^u_n|_{\omega_n} = \frac{1}{\abs{\omega_n}} \max_{u \in \Mn} (u,\one_{\omega_n}).
\end{equation*}
Then $(u^u_{\omega_n} - u^l_{\omega_n}) \to 0$ and for sufficiently large $n$ we have that
\begin{equation*}
u^l_{\omega_n} \leq \uquer|_{\omega_n} \leq u^u_{\omega_n}.
\end{equation*}
\end{theorem}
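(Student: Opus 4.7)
The plan is to combine three ingredients: (i) feasibility of $\uquer$ in $\Mn$ for large $n$ via Proposition~\ref{uquer in Mn}, (ii) the structural property of the model manifold for $\reg=\TV+\gamma\norm{\cdot}_1$ (proved in Appendix~\ref{model_manifolds}), namely that every $u\in\Mn$ is constant on any region where $\un$ is constant, and (iii) compactness of $\Mn$ together with injectivity of $A$ to pin down the limits.

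First I would verify that the optimisation problems defining $u^l_n|_{\omega_n}$ and $u^u_n|_{\omega_n}$ admit optimisers: by Proposition~\ref{v_n to uquer} the set $\Mn$ is contained in a sub-level set of $\reg$ and is closed (all defining constraints involve lower- or upper-semicontinuous functionals), hence sequentially compact in $L^1$, so the linear functionals $u\mapsto(u,\one_{\omega_n})$ attain their extrema on $\Mn$. Denote such optimisers by $v^l_n, v^u_n\in \Mn$. By injectivity of $A$ there is exactly one solution of~\eqref{Au=f}, namely $\uquer$, and it is the unique $\reg$-minimiser, so Proposition~\ref{uquer in Mn} yields $\uquer\in\Mn$ for $n$ large enough.

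Next I would invoke the model manifold property from Appendix~\ref{model_manifolds}: since $\un$ is constant on $\omega_n$, every $u\in\Mn$ is constant on $\omega_n$. Applied to $v^l_n$, $v^u_n$, and $\uquer$, this yields
\begin{equation*}
u^l_{\omega_n}=v^l_n|_{\omega_n},\qquad u^u_{\omega_n}=v^u_n|_{\omega_n},\qquad \uquer|_{\omega_n}=:\bar c_n,
\end{equation*}
all being well-defined constants. Feasibility of $\uquer$ in both the min- and the max-problem then gives the sandwich $u^l_{\omega_n}\leq \bar c_n\leq u^u_{\omega_n}$, which is the second claim of the theorem.

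It remains to establish $u^u_{\omega_n}-u^l_{\omega_n}\to 0$. By Proposition~\ref{v_n to uquer}, every subsequence of $\{v^l_n\}$ and $\{v^u_n\}$ has a further subsequence converging strongly in $L^1$ to some solution of~\eqref{Au=f}, which by injectivity must equal $\uquer$. A standard subsequence-of-subsequence argument then upgrades this to $v^l_n\to\uquer$ and $v^u_n\to\uquer$ strongly in $L^1$ along the full sequence. Combined with the constancy of $v^u_n-v^l_n$ on $\omega_n$, this gives
\begin{equation*}
\abs{u^u_{\omega_n}-u^l_{\omega_n}}=\frac{1}{\abs{\omega_n}}\adaptabs{(v^u_n-v^l_n,\one_{\omega_n})}\leq \frac{1}{\abs{\omega_n}}\norm{v^u_n-v^l_n}_1\to 0,
\end{equation*}
provided $\abs{\omega_n}$ stays bounded away from $0$. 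The main obstacle is precisely this non-degeneracy: it does not hold for arbitrary constant regions, but follows from the Hausdorff convergence of level sets proved in Section~\ref{level_sets}, which forces $\omega_n$ to approach a positive-measure constant region of the piecewise-constant $\uquer$; in particular $\abs{\omega_n}$ is bounded below by a positive constant for large $n$.
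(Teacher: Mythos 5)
Your proof takes essentially the same route as the paper's: feasibility of $\uquer$ in $\Mn$ for large $n$ (Proposition~\ref{uquer in Mn}) gives the sandwich, the model-manifold property of $\TV(\cdot)+\gamma\norm{\cdot}_1$ gives constancy of elements of $\Mn$ on $\omega_n$, and Proposition~\ref{v_n to uquer} together with injectivity of $A$ gives convergence of the extremisers to $\uquer$. You are in fact more careful than the paper on the final step: the paper concludes by ``continuity of the linear functional'' $(u,\one_{\omega_n})$, glossing over the $n$-dependence of $\omega_n$ and of the normalisation $1/\abs{\omega_n}$, whereas you correctly isolate the needed non-degeneracy $\abs{\omega_n}\geq c>0$ (and the subsequence-to-full-sequence upgrade), which is precisely what makes the paper's one-line conclusion legitimate.
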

\begin{proof}
Consider an arbitrary $u \in \Mn$ and denote its value $\omega_n$ by $\uun$. 
Consider the following linear functional:
\begin{equation}\label{func_u_on_omega}
(u,\one_\Un) = \uun \cdot \abs{\Un}.
\end{equation}

Since by Proposition~\ref{uquer in Mn} $\uquer \in \Mn$ if $n$ is sufficiently large, the jump sets of $\un$ and $\uquer$ coincide and we have that
\begin{equation*}
(\uquer,\one_\Un) = \uquer|_{\omega_n} \cdot \abs{\Un}.
\end{equation*}

Therefore, minimising and maximising this functional on $\Mn$ gives us a lower and an upper bound for $\uquer$.
By Proposition~\ref{v_n to uquer}, the $\argmin$ and $\argmax$ converge to a solution of~\eqref{Au=f}, which is unique by the injectivity of $A$. 
Therefore, by the continuity of the linear functional~\eqref{func_u_on_omega} we get that 
\begin{equation*}
u^u_{\omega_n} \defeq \max_{u \in \Mn} (u,\one_\Un) \to \uquer|_{\omega_n} \quad \text{and} \quad u^l_{\omega_n} \defeq \min_{u \in \Mn} (u,\one_\Un) \to \uquer|_{\omega_n},
\end{equation*}
\noindent proving the conjecture.
\end{proof}

\begin{remark}
Note that due to the fact that for a fixed $n$ $\uquer \in \Mn$ only for sufficiently small $\eps$, we can only guarantee that $\un$ captures the jump set of $\uquer$ in the limit. If we had the inclusion $\uquer \in \Mn$ with $\eps = 0$ for a fixed (but sufficiently large) $n$, the jump sets of $\un$ and $\uquer$ would coincide by the results of Appendix~\ref{model_manifolds}.
It is not clear, whether under any suitable assumptions $\uquer \in \Mn$ with $\eps = 0$ already for a fixed $n$, and can be an interesting direction of future research.
\end{remark}

%%%%%%%%%%%%%%%%%%%%%%%%%%%%%%%%%%%%%%%%%%
%%%%%%%%%%%%%%%%%%%%%%%%%%%%%%%%%%%%%%%%%%
%%%%%%%%%%%%%%%%%%%%%%%%%%%%%%%%%%%%%%%%%%
%\section{Summary of assumptions made}

%%%%%%%%%%%%%%%%%%%%%%%%%%%%%%%%%%%%%%%%%%
%%%%%%%%%%%%%%%%%%%%%%%%%%%%%%%%%%%%%%%%%%
%%%%%%%%%%%%%%%%%%%%%%%%%%%%%%%%%%%%%%%%%%
\section{Numerical experiments}\label{numerical_experiments}
%1D deblurring with uncertainty in the blurring kernel.
% Uniform noise, salt-and-pepper noise.

In this section we present numerical experiments illustrating the results of the previous sections.
 We concentrate on 1D examples in order to see the effects of different settings more clearly.
 We use CVX~\cite{cvx,cvx2} in all our experiments.

We consider deblurring with uncertainty in the blurring kernel, which has been studied in the partial-order based setting in~\cite{YK_JL:2018}.
Consider the signal shown in Fig.~\ref{pic-deblurring_signal} in blue (dashed line).
 This signal is convolved with a Gaussian blurring kernel with standard deviation $0.5$ and Dirichlet boundary conditions and then $2.5\%$ uniform noise is added to it.
 The blurred and noisy signal is shown in Fig.~\ref{pic-deblurring_signal} in green (solid line).
 Knowing the amount of noise in this signal, we can obtain lower and upper data bounds $f^l$ and $f^u$ as explained in~\cite{YK_JL:2018}. 

Being a convolution with a Gaussian kernel, the forward operator $A$ is injective and therefore the exact solution is unique.
 Assumption~\ref{ass_5} is also satisfied for a convolution operator, which implies that the $L^1$ norm of $u$ is bounded on the feasible set $\{u \geq 0 \colon B_n u \leq \phi_n\}$ and the regulariser $\reg(\cdot) = \TV(\cdot)$ satisfies the conditions of Theorem~\ref{thm:convergence}.
 However, Assumption~\ref{ass_3} is not satisfied for $\reg(\cdot) = \TV(\cdot)$ (see Appendix~\ref{App:dJ(0)}) and we cannot expect Hausdorff convergence of the level sets in this case (convergence rates~\eqref{conv_rate} do not apply either).

 In our  experiments we are going solve the following problem
 \begin{equation}\label{eq:primal_numerics}
 \min_{u \in L^1 \colon u \geq 0} \reg(u) \quad \text{s.t. } A^l u \leq f^u, \,\, A^u u \geq f^l
 \end{equation}
 \noindent with different choices of $A^l$ and $A^u$. We will use both $\reg(\cdot) = \TV(\cdot)$ and $\reg(\cdot) = \TV(\cdot) + \gamma \norm{u}_1$, where $\gamma$ is a small constant.

%%%%%
\paragraph{Reconstruction quality.}
Let us assume that only a slightly perturbed version $\tilde A$ of the blurring operator $A$ is available: 
\begin{equation*}
\tilde a_{ij} = \max\{a_{ij} + r_{ij} \cdot d,0\},
\end{equation*}
\noindent where $d=0.05*\max_{k,l} a_{kl}$ and $r_{ij}$ are i.i.d.~uniform random numbers with support $[-1,1]$ (i.e. the error in the operator is $5\%$). 
 Let us use the incorrect operator as if it were exact and solve~\eqref{eq:primal_numerics} with $A^l = A^u = \tilde A$ and $\reg(\cdot) = \TV(\cdot)$ (the results for $\reg(\cdot) = \TV(\cdot) + \gamma \norm{u}_1$ are similar). 
 As demonstrated in~\cite{YK_JL:2018}, this yields highly oscillatory solutions (Fig.~\ref{pic-reconstruction_wrong}).

 Knowing the amount of noise in the operator $\tilde A$, we can obtain lower and upper bounds $A^l$ and $A^u$ for the unknown exact operator $A$ (as also explained in~\cite{YK_JL:2018}).   
 Using these bounds, let us now reconstruct the signal by solving problem~\eqref{eq:primal_numerics} with $\reg(u) = \TV(u)$ and $\reg(u) = \TV(u) + \gamma \norm{u}_1$, where $\gamma = 10^{-4}$.
 The results are shown in Fig.~\ref{pic-reconstruction_TV} and~\ref{pic-reconstruction_TVL1}.

\begin{figure}[t]
    \centering
    \begin{subfigure}[t]{0.45\textwidth}
	\includegraphics[width=\textwidth]{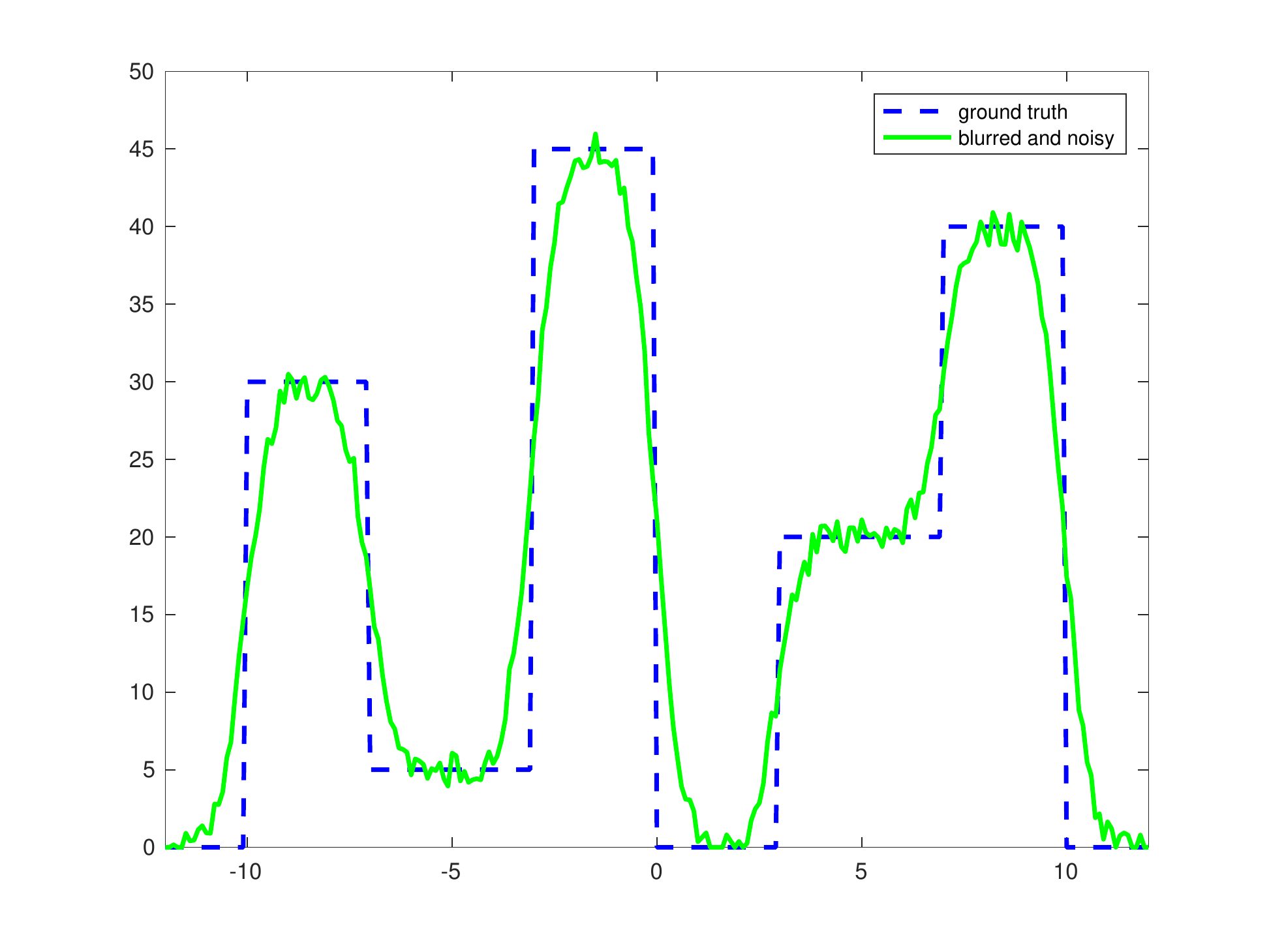}
	\subcaption{Ground truth (blue dashed line) and blurred and noisy signal (green solid line).
 $\PSNR = 18.3$, $\SSIM = 0.53$.}
	\label{pic-deblurring_signal}    
    \end{subfigure}
\begin{subfigure}[t]{0.45\textwidth}
            \centering
	\includegraphics[width=\textwidth]{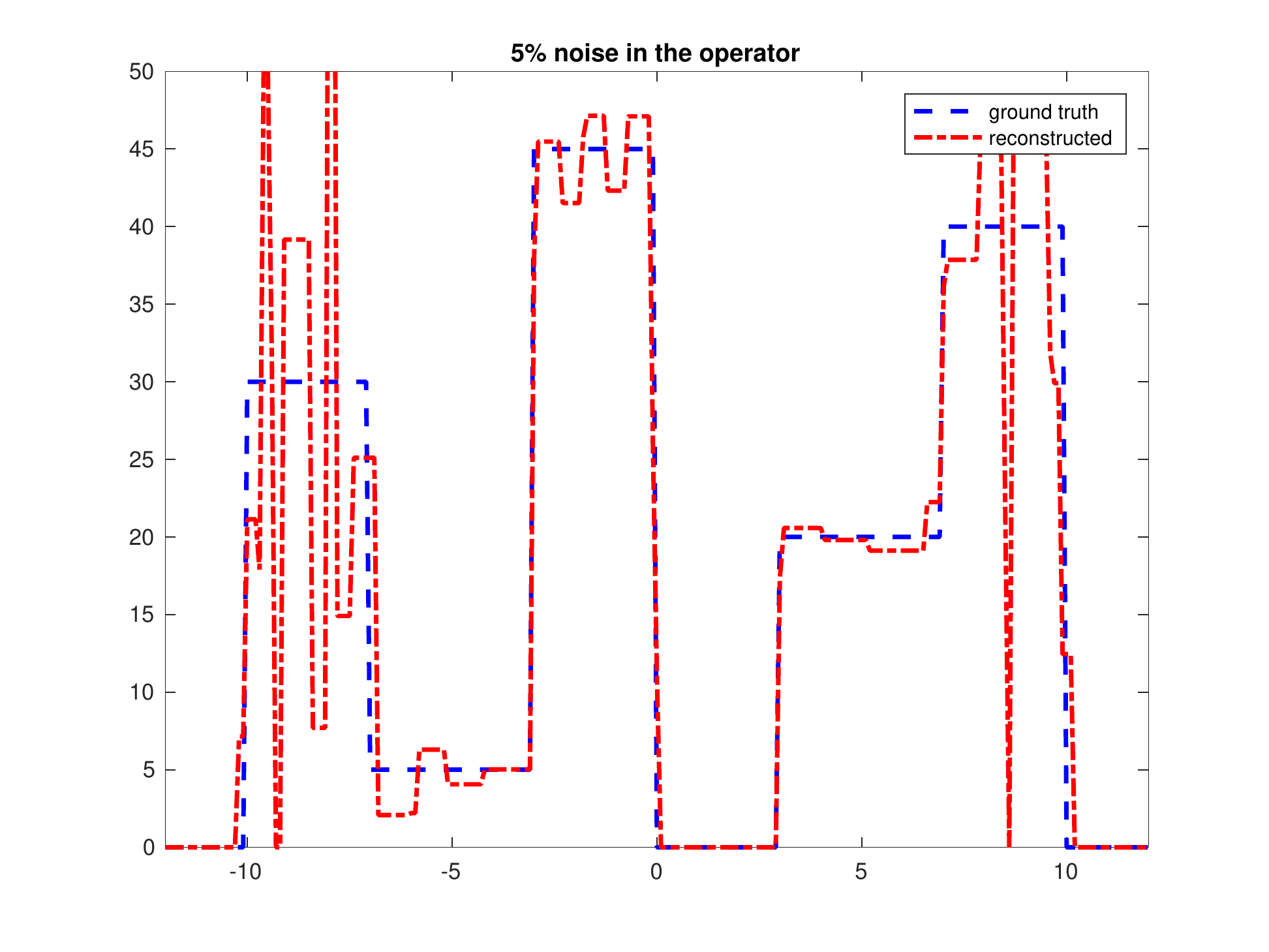}
	\subcaption{Reconstruction using a noisy operator (red dash-dotted line).
  $\PSNR = 15.3$, $\SSIM = 0.66$.
 }
	\label{pic-reconstruction_wrong}
    \end{subfigure}
    \\
\begin{subfigure}[t]{0.45\textwidth}
            \centering
	\includegraphics[width=\textwidth]{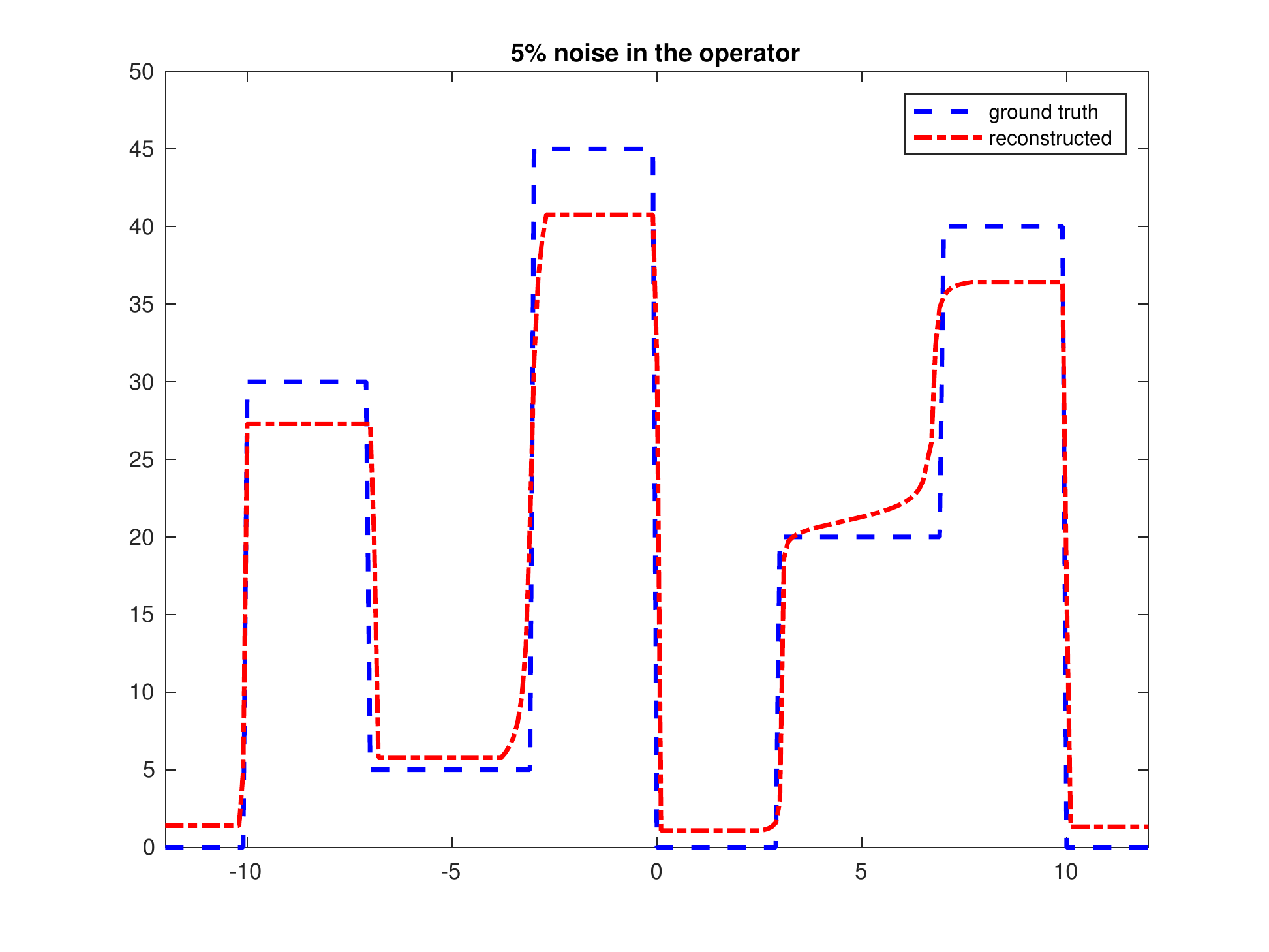}
	\subcaption{Interval-based reconstruction with $\reg(u) = \TV(u)$ (red dash-dotted line).
  \\ $\PSNR = 21.1$, $\SSIM = 0.67$.
 }
	\label{pic-reconstruction_TV}
    \end{subfigure}
\begin{subfigure}[t]{0.45\textwidth}
            \centering
	\includegraphics[width=\textwidth]{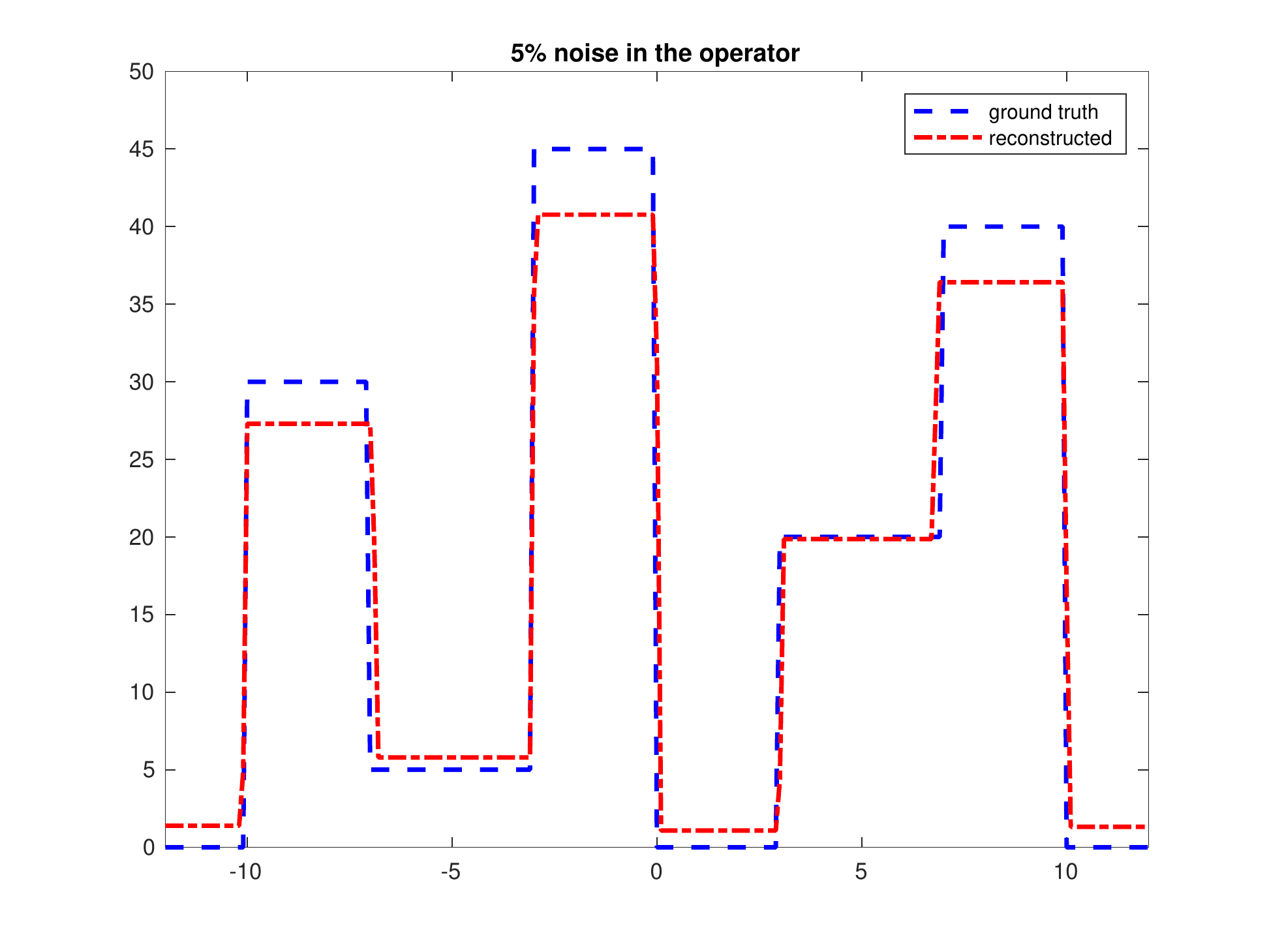}
	\subcaption{Interval-based reconstruction with $\reg(u) = \TV(u)+\gamma\norm{u}_1$, $\gamma = 10^{-4}$ (red dash-dotted line).
 $\PSNR = 21.9$, $\SSIM = 0.70$.}
	\label{pic-reconstruction_TVL1}
    \end{subfigure}
    \caption{Reconstruction using a noisy operator yields a highly oscillatory solution (\subref{pic-reconstruction_wrong}).
 The interval-based approach yields stable reconstructions (\subref{pic-reconstruction_TV} and \subref{pic-reconstruction_TVL1}).
 However, the geometric structure of the reconstructions is different. Reconstructions with $\reg(\cdot) = \TV(\cdot) + \gamma \norm{\cdot}_1$ are piecewise-constant (\subref{pic-reconstruction_TVL1}), whilst those with $\reg(\cdot) = \TV(\cdot)$ are not.}
 %does not satisfy Assumption~\ref{ass_3} and, therefore, convergence of the level sets cannot be guaranteed (\subref{pic-reconstruction_TV}).}
\end{figure}

As expected, in both reconstructions the oscillations disappear and we obtain a stable reconstruction.
 But it is striking how much difference the small addition $\gamma \norm{\cdot}$ makes on the qualitative nature of the reconstruction.
 While the reconstruction with $\reg(\cdot) = \TV(\cdot) + \gamma \norm{u}_1$ has a structure very similar to that of the exact solution, the reconstruction based on plain $\TV(\cdot)$ is smooth in regions where the exact solution has jumps.
% This observation illustrates the results of Section~\ref{level_sets} and the role of Assumption~\ref{ass_3}.
 We will discuss the structural properties of the reconstruction in both cases later on in this Section. 
 Let us note that for this signal the value of $\TV(\un)$ is about $180$, while $\gamma \norm{\un}_1$ is less than $0.5$.

It is worth noting that the value of $\TV$ of the reconstructions in Fig.~\ref{pic-reconstruction_TV} and~\ref{pic-reconstruction_TVL1} are identical up to machine precision.
 Therefore, the solution in Fig.~\ref{pic-reconstruction_TVL1} also solves problem~\eqref{eq:primal_numerics} with $\reg(\cdot) = \TV(\cdot)$ (the converse is not true, the $L^1$ norm of the solution in Fig.~\ref{pic-reconstruction_TV} is strictly greater than that of the solution in Fig.~\ref{pic-reconstruction_TVL1}).
 This demonstrates that the choice $\reg(\cdot) = \TV(\cdot)$ \emph{can} produce piecewise-constant reconstructions, while the choice $\reg(\cdot) = \TV(\cdot) + \gamma \norm{\cdot}_1$ produces them with a guarantee.
 % (as implied by the convergence of the level sets).

%Let us note that, although the reconstruction with $\reg(\cdot) = \TV(\cdot) + \gamma \norm{u}_1$ has jumps everywhere, where the exact solution has a jump, it also has an additional small jump at the value of approximately $5.5$.
% This jump disappears in experiments with less noise.
% Note that, although the jump set of the exact solution $\uquerJ$ is smaller than that of $\un$, the exact solution still lies in the model manifold generated by $\un$, since it contains solutions that don't have additional jumps compared to $\un$, but do not have to jump everywhere where $\un$ does~\cite{Burger_Rasch_debiasing}.

%%%%%
\paragraph{Structure of solutions.} The behaviour that $\TV$ demonstrates in Fig.~\ref{pic-reconstruction_TV} is surprising, since $\TV$ is known for introducing \emph{new} jumps, referred to as \emph{staircasing}~\cite{Ring:2000, Jalalzai:2016}, and not for overlooking existing ones.
 To better understand what happened in Fig.~\ref{pic-reconstruction_TV}, let us consider the simplest scenario, $B_n \equiv B \equiv E$, $\eps_n \leq u-f \leq \eps_n$, and solve the following problem:
\begin{equation*}
\min_{u \colon -\eps_n \leq u-f \leq \eps_n} \TV(u).
\end{equation*}

We follow the analysis in~\cite{Ring:2000}.
 In the one-dimensional case, the optimality condition reads as follows
\begin{equation*}
q' + \nu^u_n - \nu^l_n = 0,
\end{equation*}
\noindent where $q' \in \d\TV(u)$, $\abs{q} \leq 1$, and $\nu^u_n \neq 0$ when $u = f + \eps_n$, $\nu^l_n \neq 0$ when $u = f - \eps_n$.
 If neither of the bounds is active, we get that $q' = 0$ and $q$ can stay equal to $1$ until either the lower or the upper bound becomes active, resulting in piecewise-monotone reconstructions between the constant regions.

If we replace $\TV(\cdot)$ with $\TV(\cdot) + \gamma \norm{\cdot}_1$, $\gamma >0$, we get the following optimality condition (we assume that $f - \eps_n \geq 0$ and omit $\sign(u)$):
\begin{equation*}
\gamma + q' + \nu^u_n - \nu^l_n = 0.
\end{equation*}
\noindent Therefore, if neither of the bounds are active, we get that $q' = -\gamma < 0$ and $q$ cannot stay equal to $1$, resulting in piecewise-constant reconstructions.

In multiple dimensions the situation is different. The optimality condition in this case is as follows
\begin{equation*}
\div q + \nu^u_n - \nu^l_n = 0
\end{equation*}
\noindent for a smooth vector-field $q$ with $\norm{q}_\infty \leq 1$ and with inactive bounds we merely get that $\div q = 0$, which does not imply that $q$ is constant, in contrast to the one-dimensional case. 

An appropriate generalisation to multiple dimensions would be 
\begin{equation*}
\reg(u) = \norm{\div u}_1
\end{equation*}
\noindent for vector-valued images $u \in L^1(\Omega,\R^n)$~\cite{Briani:2012}. In this case one indeed has 
\begin{equation*}
\nabla q + \nu^u_n - \nu^l_n = 0
\end{equation*}
\noindent for a scalar field $q$ and $\nabla q = 0$ when neither of the bounds is active. Numerical experiments with vector-valued images are beyond the scope of this paper.

%%%%%
\paragraph{Comparison with Tikhonov-type regularisation.} For comparison, we solve the deblurring problem using a Tikhonov-type approach combined with Morozov's discrepancy principle (e.g.,~\cite{engl:1996}). Denote the exact signal by $f$, the noise level in the signal by $\delta$ and the noisy signal by $f_\delta$, so that we get that $\norm{f-f_\delta}_\infty \leq \delta$. We solve the following problem
\begin{equation}\label{eq:tikhonov_numerics}
 \min_{u \in L^1 \colon u \geq 0} \norm{\tilde A u - f_\delta}_\infty + \alpha \reg(u),
 \end{equation}
\noindent where $\tilde A$ is the noisy operator and $\alpha$ is chosen such that
\begin{equation} \label{eq:discr_pr_1}
 \norm{\tilde A u^\alpha_\delta - f_\delta}_\infty = C\delta,
 \end{equation}
 \noindent holds for the reconstructed signal $u^\alpha_\delta$ with a constant $C$ slightly greater than $1$ (we chose $C=1.01$). This approach is equivalent to~\eqref{eq:primal_numerics} with $A^l = A^u = \tilde A$, since in the absence of the operator error the constraints in~\eqref{eq:primal_numerics} are equivalent to a constraint on $\norm{\tilde A u - f_\delta}_\infty$ and $\alpha$ chosen according to~\eqref{eq:discr_pr_1} is just the Lagrange multiplier for this constraint. The result is shown in Fig.~\ref{pic-discr_pr_1}. Not surprisingly, we get the same kind of oscillations as in Fig.~\ref{pic-reconstruction_wrong}. 
 
 \begin{figure}[h!]
    \centering
    \begin{subfigure}[t]{0.45\textwidth}
	\includegraphics[width=\textwidth]{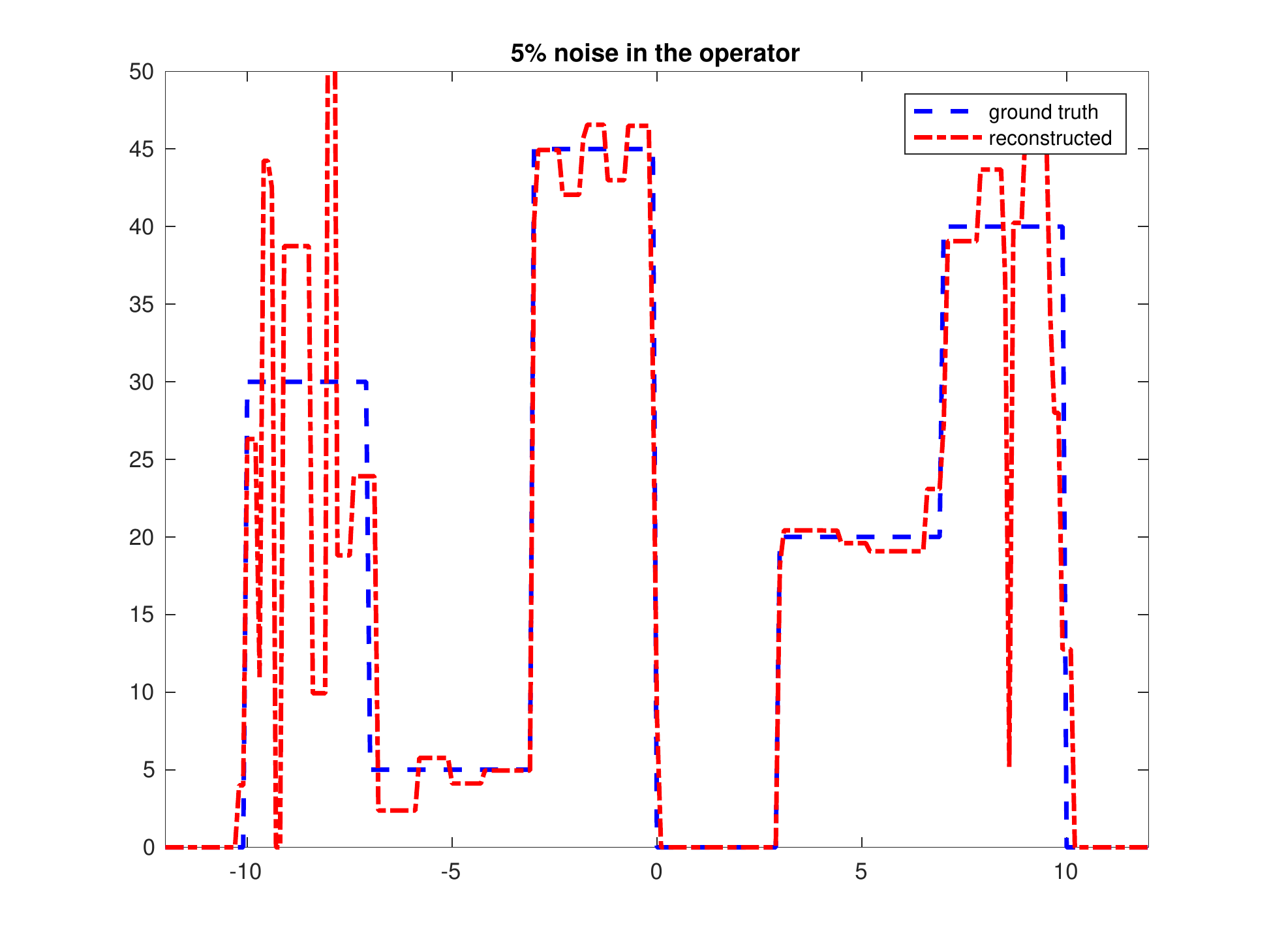}
	\subcaption{Tikhonov-type regularisation with $\alpha \colon \norm{\tilde A u^\alpha_\delta - f_\delta}_\infty = C\delta$. 
	\\ (red dash-dotted line). $\reg(u) = \TV(u)$. 
	\\ $5\%$ noise in the operator.
\\ $\PSNR = 16.4$, $\SSIM = 0.69$.}
	\label{pic-discr_pr_1}    
    \end{subfigure}
\begin{subfigure}[t]{0.45\textwidth}
            \centering
	\includegraphics[width=\textwidth]{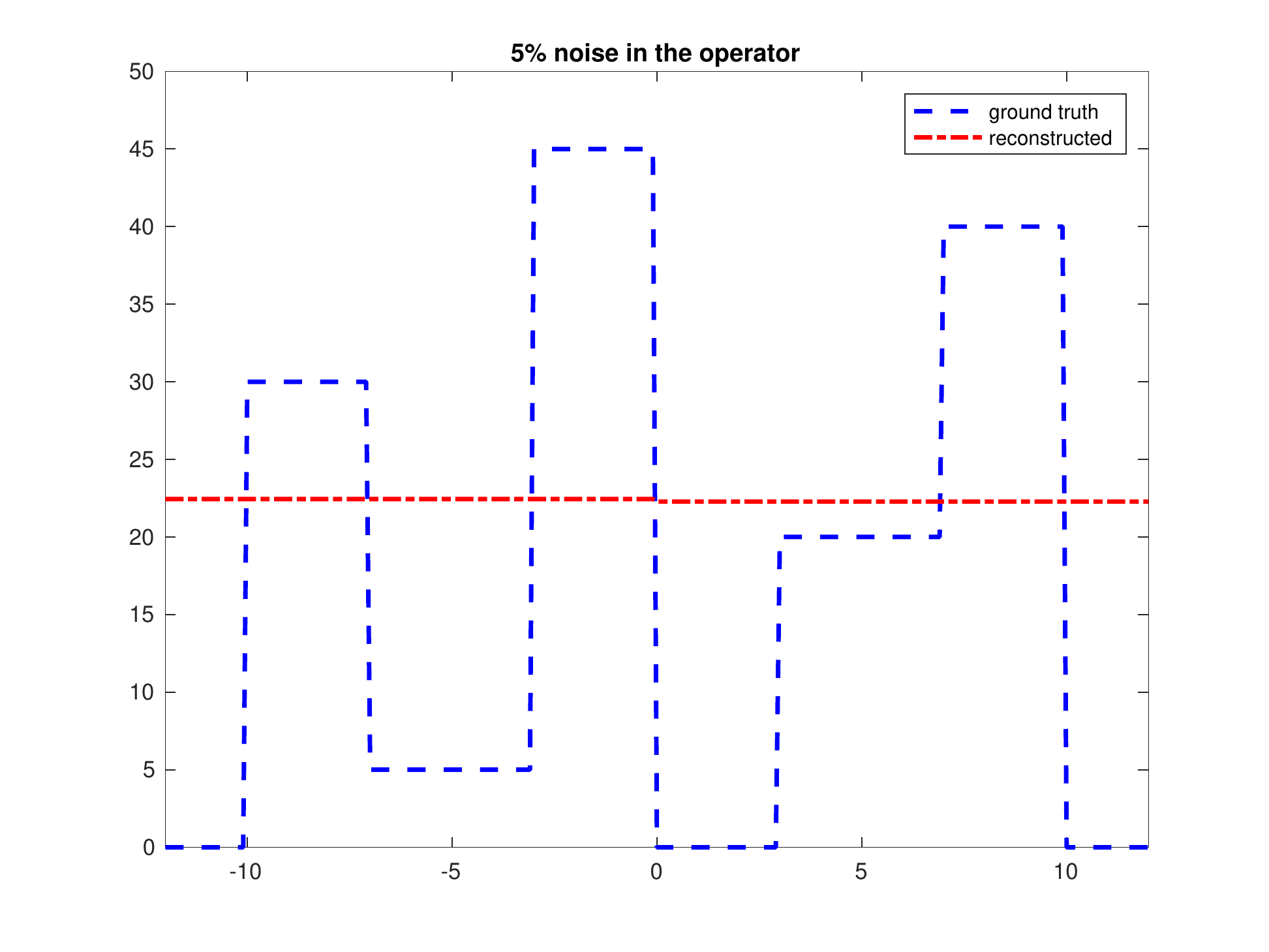}
	\subcaption{Tikhonov-type regularisation with $\alpha \colon \norm{\tilde A u^\alpha_\delta - f_\delta}_\infty = C (\delta + h \norm{u^\alpha_\delta}_1)$.
	\\ (red dash-dotted line). $\reg(u) = \TV(u)$. 
	\\ $5\%$ noise in the operator.
\\ $\PSNR = 9.10$, $\SSIM = 0.43$.
 }
	\label{pic-discr_pr_2_5percent}
    \end{subfigure}  
    \\
        \begin{subfigure}[t]{0.45\textwidth}
	\includegraphics[width=\textwidth]{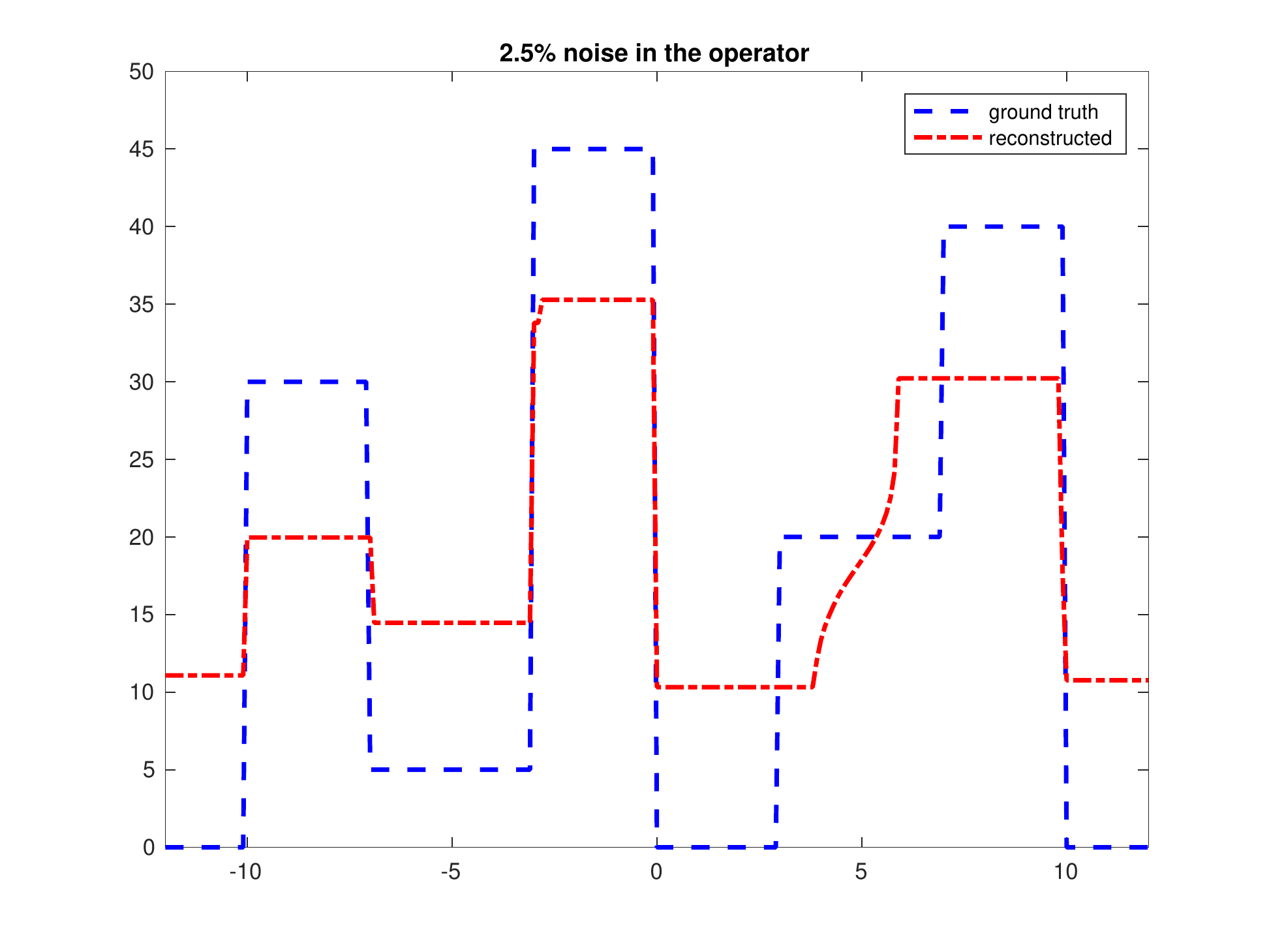}
	\subcaption{Tikhonov-type regularisation with $\alpha \colon \norm{\tilde A u^\alpha_\delta - f_\delta}_\infty = C (\delta + h \norm{u^\alpha_\delta}_1)$.
	\\ (red dash-dotted line). $\reg(u) = \TV(u)$.
	\\ $2.5\%$ noise in the operator.
\\ $\PSNR = 14.1$, $\SSIM = 0.51$.}
	\label{pic-discr_pr_2_25percent_TV}    
    \end{subfigure}
\begin{subfigure}[t]{0.45\textwidth}
            \centering
	\includegraphics[width=\textwidth]{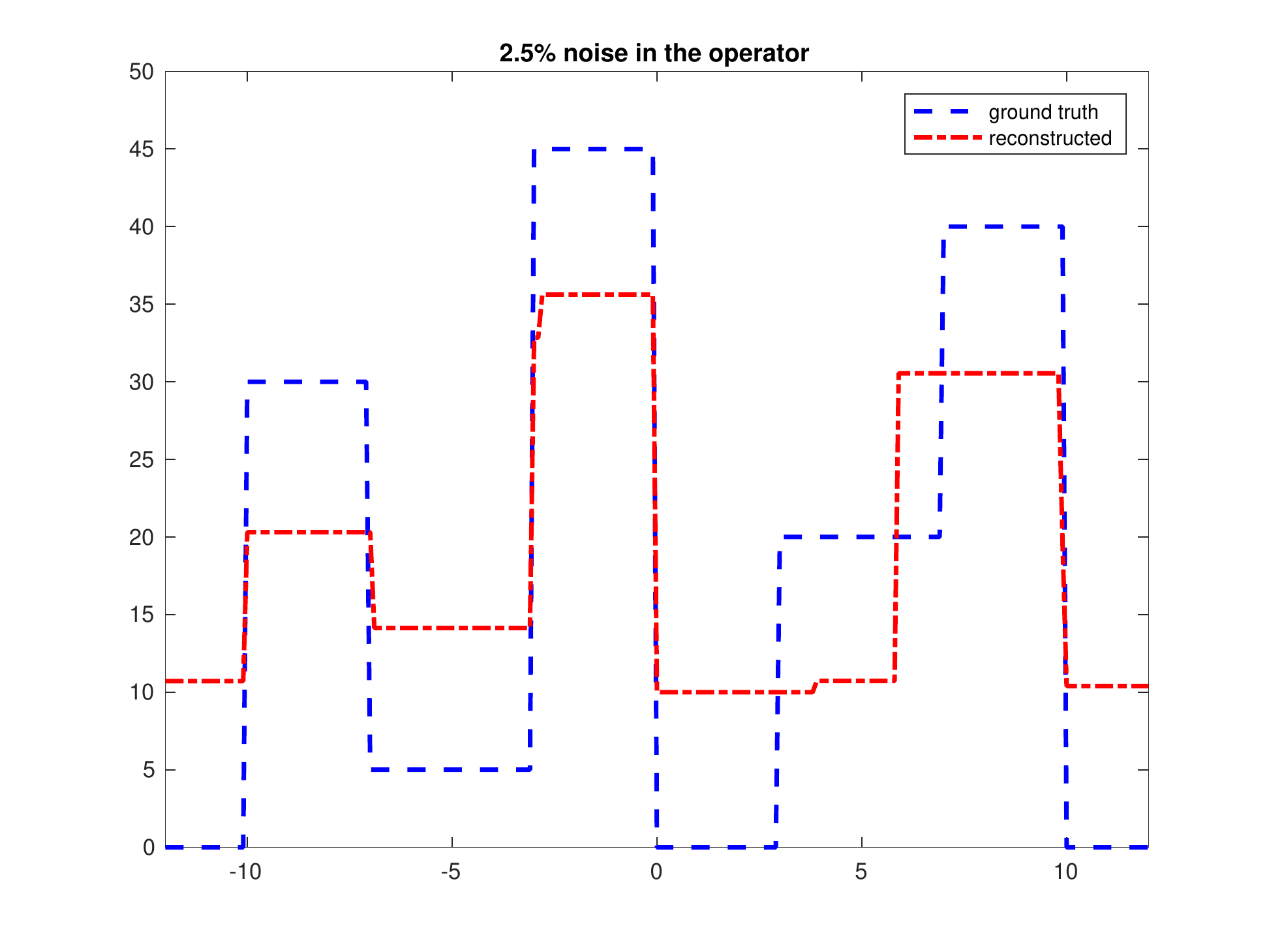}
	\subcaption{Tikhonov-type regularisation with $\alpha \colon \norm{\tilde A u^\alpha_\delta - f_\delta}_\infty = C (\delta + h \norm{u^\alpha_\delta}_1)$.
	\\ (red dash-dotted line). $\reg(u) = \TV(u)+\gamma\norm{u}_1$, $\gamma = 10^{-4}$. $2.5\%$ noise in the operator.
\\ $\PSNR = 14.1$, $\SSIM = 0.52$.
 }
	\label{pic-discr_pr_2_25percent_TVL1}
    \end{subfigure}    
    \caption{Tikhonov-type regularisation using a noisy operator combined with Morozov's discrepancy principle. Not accounting for operator errors results in oscillations~(\subref{pic-discr_pr_1}). Accounting for these errors within the framework of the discrepancy principle results in a severe loss of contrast. The solution is almost entirely flat for $5\%$ noise in the operator~(\subref{pic-discr_pr_2_5percent}). With $2.5\%$ noise in the operator the solution retains some similarity to the ground truth, but the loss of contrast is significant and the jump set is not correctly identified~(\subref{pic-discr_pr_2_25percent_TV}) and~(\subref{pic-discr_pr_2_25percent_TVL1}). The geometric structure of the reconstructions using $\reg(u) = \TV(u)$~(\subref{pic-discr_pr_2_25percent_TV}) and $\reg(u) = \TV(u)+\gamma\norm{u}_1$~(\subref{pic-discr_pr_2_25percent_TVL1}) is similar to that of the reconstructions obtained using our approach.}
\end{figure}

The reason for such oscillations is that the ground truth does not belong to the feasible set in~\eqref{eq:primal_numerics} with $A^l  = A^u = \tilde A$. A possible solution to this is modifying the feasible set so that the ground truth would become feasible. This could be achieved by replacing the constraint
\begin{equation}
\norm{\tilde A u - f_\delta}_\infty \leq \delta
\end{equation}
\noindent with
\begin{equation}\label{eq:fs_norm2}
\norm{\tilde A u - f_\delta}_\infty \leq \delta + h \norm{u}_1,
\end{equation}
\noindent where $h$ is the noise level in the operator, i.e. $h$ is such that $\norm{\tilde A - A}_{L^1 \to L^\infty} \leq h$. Using this constraint in the context of the residual method would result in a non-convex optimisation problem, however, in the context of Tikhonov-type regularisation it can be implemented in a convex manner using the following modification of the discrepancy principle (see~\cite{TGSYag} for the theory in Hilbert spaces)
\begin{equation} \label{eq:discr_pr_2}
 \norm{\tilde A u^\alpha_\delta - f_\delta}_\infty = C(\delta + h\norm{u}_1).
 \end{equation}

Since the constraint~\eqref{eq:fs_norm2} is rather conservative (it comes from the triangle inequality) and the feasible set is large, we could expect the regulariser to have a significant impact on the reconstruction. The results obtained using this approach are shown in Figs.~\ref{pic-discr_pr_2_5percent}--\ref{pic-discr_pr_2_25percent_TVL1}. With $5\%$ noise in the operator the regulariser ($\TV$ in this case) almost completely flattens out the reconstruction (Fig.~\ref{pic-discr_pr_2_5percent}). With less operator noise ($2.5\%$)  the reconstructions retain more structure, but we observe a significant loss of contrast (Figs.~\ref{pic-discr_pr_2_25percent_TV}--\ref{pic-discr_pr_2_25percent_TVL1}). We notice again the same difference in the structure of the solutions produced by $\reg(u) = \TV(u)$ (Fig.~\ref{pic-discr_pr_2_25percent_TV}) and $\reg(u) = \TV(u)+\gamma \norm{u}_1$ (Fig.~\ref{pic-discr_pr_2_25percent_TVL1}) as in Figs.~\ref{pic-reconstruction_TV}--\ref{pic-reconstruction_TVL1}.

The reason of the superior performance of the approach~\eqref{eq:primal_numerics} as compared to~\eqref{eq:tikhonov_numerics} with parameter choice~\eqref{eq:discr_pr_2} is that the feasible set in~\eqref{eq:primal_numerics} much smaller than that in~\eqref{eq:fs_norm2} (in fact, one can show that the feasible set in~\eqref{eq:primal_numerics} is a subset of~\eqref{eq:fs_norm2} if $\tilde A = \frac{A^u+A^l}{2}$ and $f_\delta = \frac{f^u+f^l}{2}$, see~\cite[Thm. 2]{Kor_IP:2014}).

%%%%%
\paragraph{Debiasing.} Although the reconstruction in Fig.~\ref{pic-reconstruction_TVL1} does well at capturing the qualitative structure of the solution, it still demonstrates a systematic bias in form of a loss of contrast.
 The same applies to the reconstruction in Fig.~\ref{pic-reconstruction_TV}.
 We will attempt to restore the contrast by optimising  on the set $\Mn$ (see~\eqref{Mn}) the discrepancy $\norm{\tilde A u - \tilde f}_2$, where $\tilde A$ and $\tilde f$ are the noisy operator and noisy data, respectively.

To define the the set $\Mn$, we need to fix two constants, $\eps$ and $C$.
 $\eps$ defines how close we want to stay to the model manifold; we choose to stay close and set $\eps = 10^{-6}$.
 Since in our case $\un > 0$ a.e., we can drop the constant $C$ from the definition of $\Mn$ (see Remark~\ref{drop_C}).

The results of debiasing applied to solutions in Fig.~\ref{pic-reconstruction_TV} and~\ref{pic-reconstruction_TVL1} are shown in Fig.~\ref{pic-debiasing_TV} and~\ref{pic-debiasing_TVL1}, respectively.
 We see that debiasing was able to almost perfectly recover the ground truth in both cases, although the qualitative nature of the reconstruction with $\reg(\cdot) = \TV(\cdot)$ is quite different from that of the ground truth.
 Note also that naive reconstruction with the noisy operator produced oscillatory results shown in Fig.~\ref{pic-reconstruction_wrong}, whilst the two-step approach involving solving problem~\eqref{primal} and debiasing nearly perfectly recovered the ground truth.

\begin{figure}[t]
    \centering
\begin{subfigure}[t]{0.45\textwidth}
            \centering
	\includegraphics[width=\textwidth]{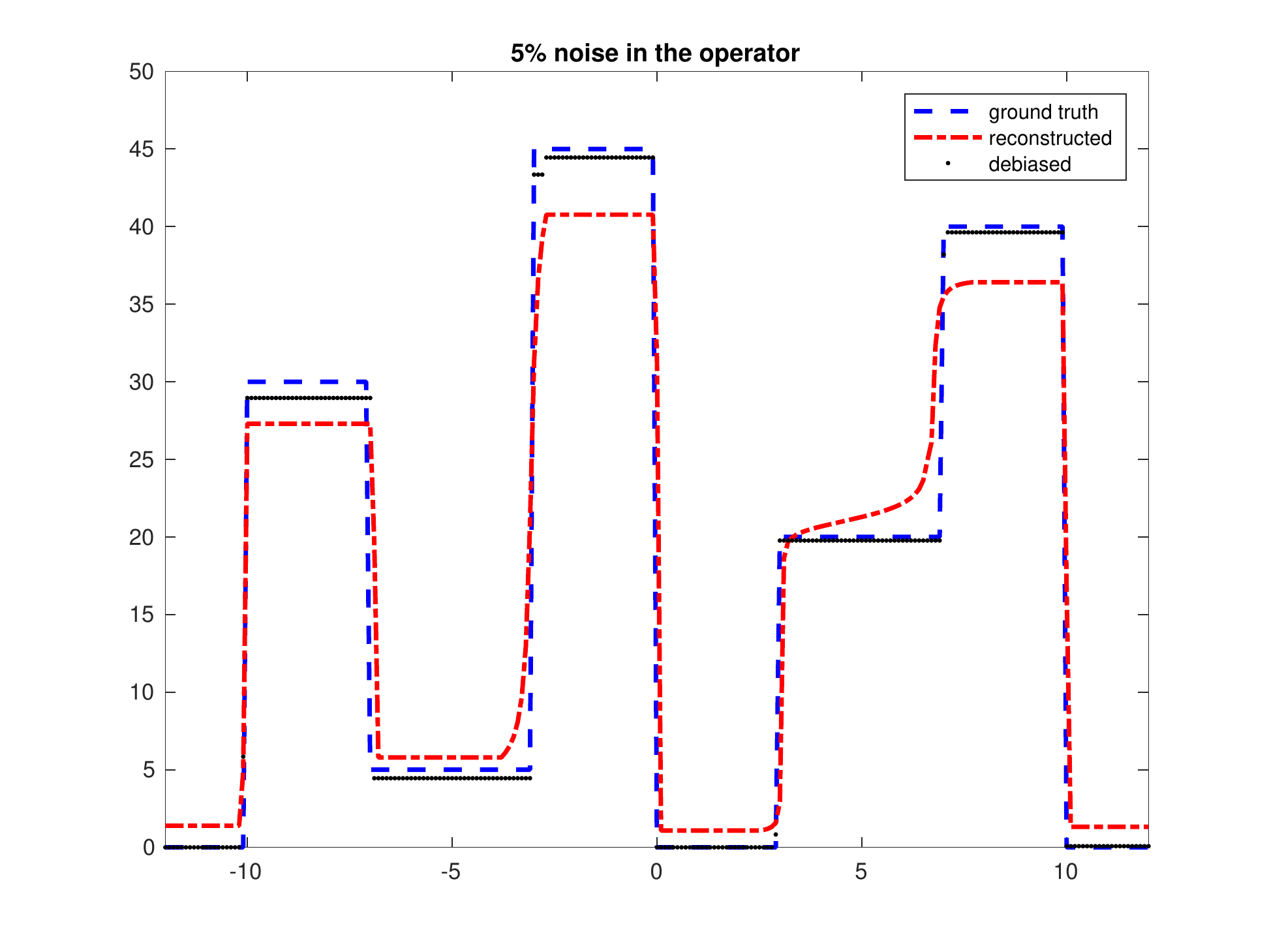}
	\subcaption{Debiased solution with $\reg(u) = \TV(u)$ (black dotted line).
  \\ $\PSNR = 29.5$, $\SSIM = 0.97$.
 }
	\label{pic-debiasing_TV}
    \end{subfigure}
\begin{subfigure}[t]{0.45\textwidth}
            \centering
	\includegraphics[width=\textwidth]{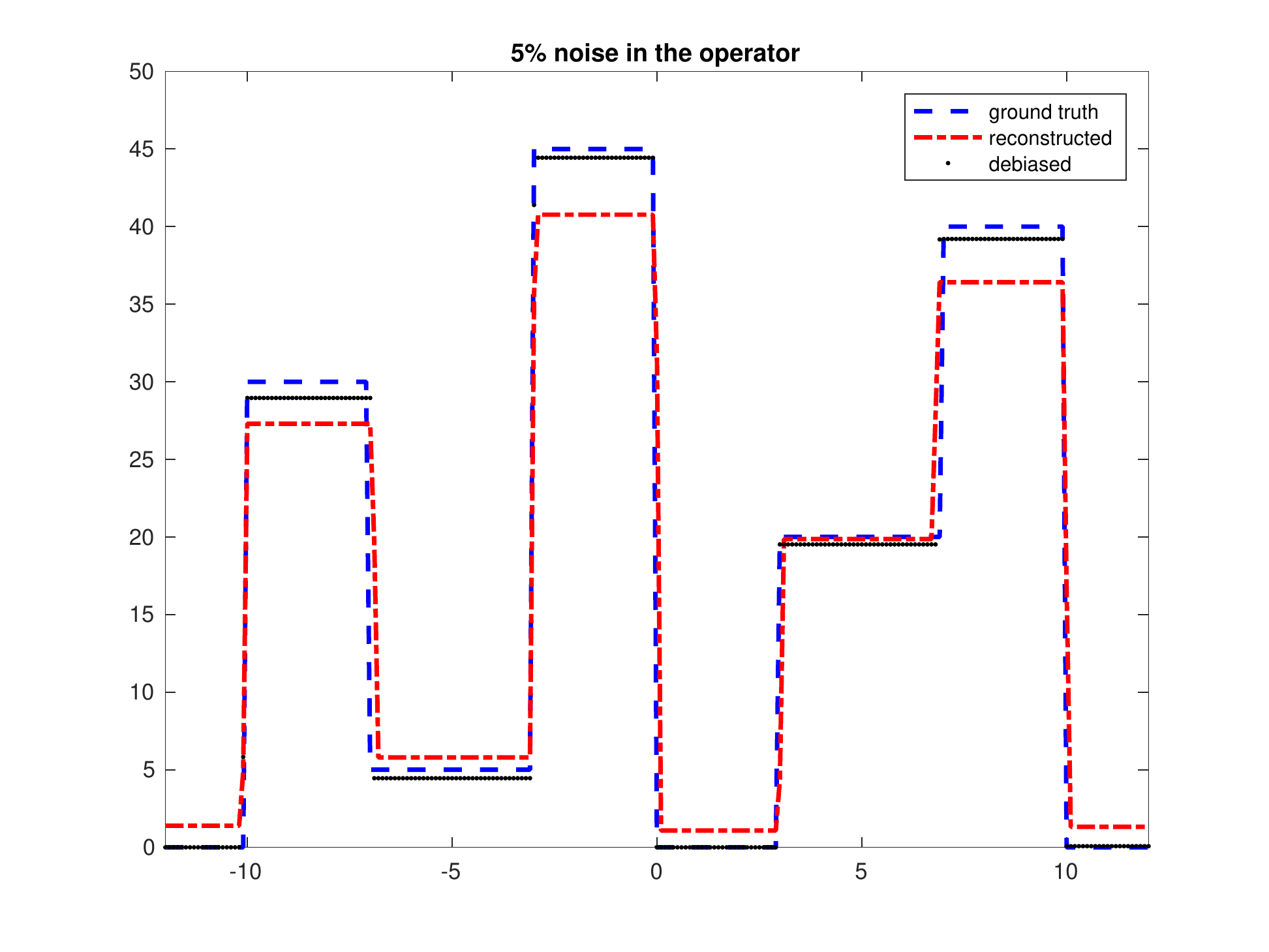}
	\subcaption{Debiased solution with $\reg(u) = \TV(u)+10^{-4}\norm{u}_1$ (black dotted line).
\\ $\PSNR = 27.5$, $\SSIM = 0.95$.}
	\label{pic-debiasing_TVL1}
    \end{subfigure}
    \caption{Debiasing almost perfectly recovers the exact solution in both cases, although the reconstruction using $\reg(u) = \TV(u)$ is quite different qualitatively from the ground truth.}
\end{figure}

%There is no particular reason to choose the $2$-norm in the discrepancy $\norm{\tilde A u - \tilde f}_2$ for debiasing. 
%We also tried other $p$-norms, for instance, the $1$-norm and the $\infty$-norm.
% The results were essentially the same, except for the case $p=\infty$, where the results for $\reg(u) = \TV(u)$ were not as good as with other $p$-norms; the results for $\reg(u) = \TV(u) + \gamma \norm{\cdot}_1$ were as good in the case $p=\infty$ as in the other cases.

\paragraph{Error bars.} The results of Section~\ref{error_bars} allow us to provide pointwise error estimates in regions where the minimiser $\un$ is constant.
 Therefore, we need to guarantee that $\un$ is piecewise-constant if the exact solution $\uquer$ is.
 We can only guarantee this for the case $\reg(u) = \TV(u)+\gamma \norm{u}_1$, therefore, we will only consider this case.

In order to provide a pointwise error estimate for a piecewise constant solution $\un$, we need to automatically determine the regions where it is constant.
 We proceed as follows.
 First observe that since $\norm{\cdot}_1$ is continuous at $0 \in \dom(\TV)$, we have that $\dJ(u) = \d\TV(u)+\gamma \d\norm{u}_1$~\cite{Borwein_Zhu}.
 Any $p \in \d\TV(u)$ can be written as a divergence of some function $q \in L^\infty$, $\norm{q}_\infty \leq 1$, such that $(\nabla \cdot q, u) = \TV(u)$~\cite{Burger_Osher_TV_Zoo}.
 The latter equality can be rewritten as  $(-q, \nabla u) = \TV(u)$, since the gradient is the adjoint of the negative divergence.
 Taking into account that $\d\norm{u}_1 = \{y \in L^\infty \colon \norm{y}_\infty \leq 1, \, \norm{u}_1 = (y,u)\}$, we get the following expression:
\begin{equation}
p_n = y_n + \nabla \cdot q_n, \quad \norm{y_n}_\infty \leq 1, \, \norm{\un}_1 = (y_n,\un), \,\,\norm{q_n}_\infty \leq 1, \, (-q_n, \nabla \un) = \TV(\un).
\end{equation}

This function $q_n$ contains the information about jumps of $\un$: whenever $\abs{q_n} < 1$, $\un$ has to be constant~\cite{Burger_Rasch_debiasing}.
 Therefore, we can locate jumps of $\un$ by finding points where $q_n = \pm 1$.
 In general, $q_n$ (as well as $y_n$) will be non-unique, but we can pick one solving the following optimisation problems:
\begin{equation}\label{y_n}
\min_{y\colon \norm{y}_\infty \leq 1} \norm{y}_1 \quad \text{s.t. } \norm{\un}_1 = (y,\un)
\end{equation}
\noindent and
\begin{equation}\label{q_n1}
\min_{q\colon \norm{q}_\infty \leq 1} \norm{q}_1 \quad \text{s.t. } p_n = \nabla \cdot q + \gamma y_n.
\end{equation}

Finding where $\abs{q_n} > 1 - \nu$ for some small constant  $\nu$  (we took $\nu = 10^{-6}$ in our experiments), we can locate the jumps of $\un$.
 Alternatively, instead of solving~\eqref{y_n} and~\eqref{q_n1}, we can solve the following problem:
\begin{equation}\label{q_n2}
\min_{q\colon \norm{q}_\infty \leq 1} \norm{q}_1 \quad \text{s.t. } \TV(\un) = (-q, \nabla \un).
\end{equation}

Both methods gave the same results in our experiments, although the method based on~\eqref{q_n2} was much less sensitive to the cut-off constant $\nu$.
 
%: the obtained $q_n$'s in~\eqref{q_n2} were sparse (i.e. had values of either $0$ or $\pm1$), while $q_n$'s obtained using~\eqref{y_n}-\eqref{q_n1} were not and had values close to one at locations where $\un$ does not jump. This is explained by the fact that $p_n$ is a smooth subgradient, which implies smoothness of $q_n$. Indeed, $p_n$ is, up to the addition of $\lambda_n$, which was zero in this experiment, in the range of $B_n^*$ and therefore smooth. Since $u_n >0$ a.e. in this experiment, $y_n$ from~\eqref{y_n} is equal to $1$ a.e. and therefore also smooth. \hl{Hence the divergence of $q_n$ is smooth and so is $q_n$}.

Having identified regions where $\un$ is constant, we can proceed with finding pointwise error bounds as described in Section~\ref{error_bars}. 
 We present results for 
 $10\%$ noise in the operator (Fig.~\ref{pic-error_bars_10}) and $5\%$ noise (Fig.~\ref{pic-error_bars_5}). 
  First of all, we see that the exact solution $\uquer$ is indeed contained within the bounds, together with the approximate solution $\un$ and the debiased solution.
 As expected, the error bars get tighter as the error in the operator gets smaller.
One can also notice that the minimiser $\un$ often lies 'on the boundary' of the feasible set, its values coinciding with either the lower of the upper bound in almost all intervals. 
 We also see that with $10\%$ operator noise the reconstruction $\un$ has a small additional jump at the value of the argument of around $8$ that also becomes clearly visible in the lower bound (see Fig.~\ref{pic-error_bars_10}).

\begin{figure}[t]
    \centering
\begin{subfigure}[t]{0.45\textwidth}
            \centering
	\includegraphics[width=\textwidth]{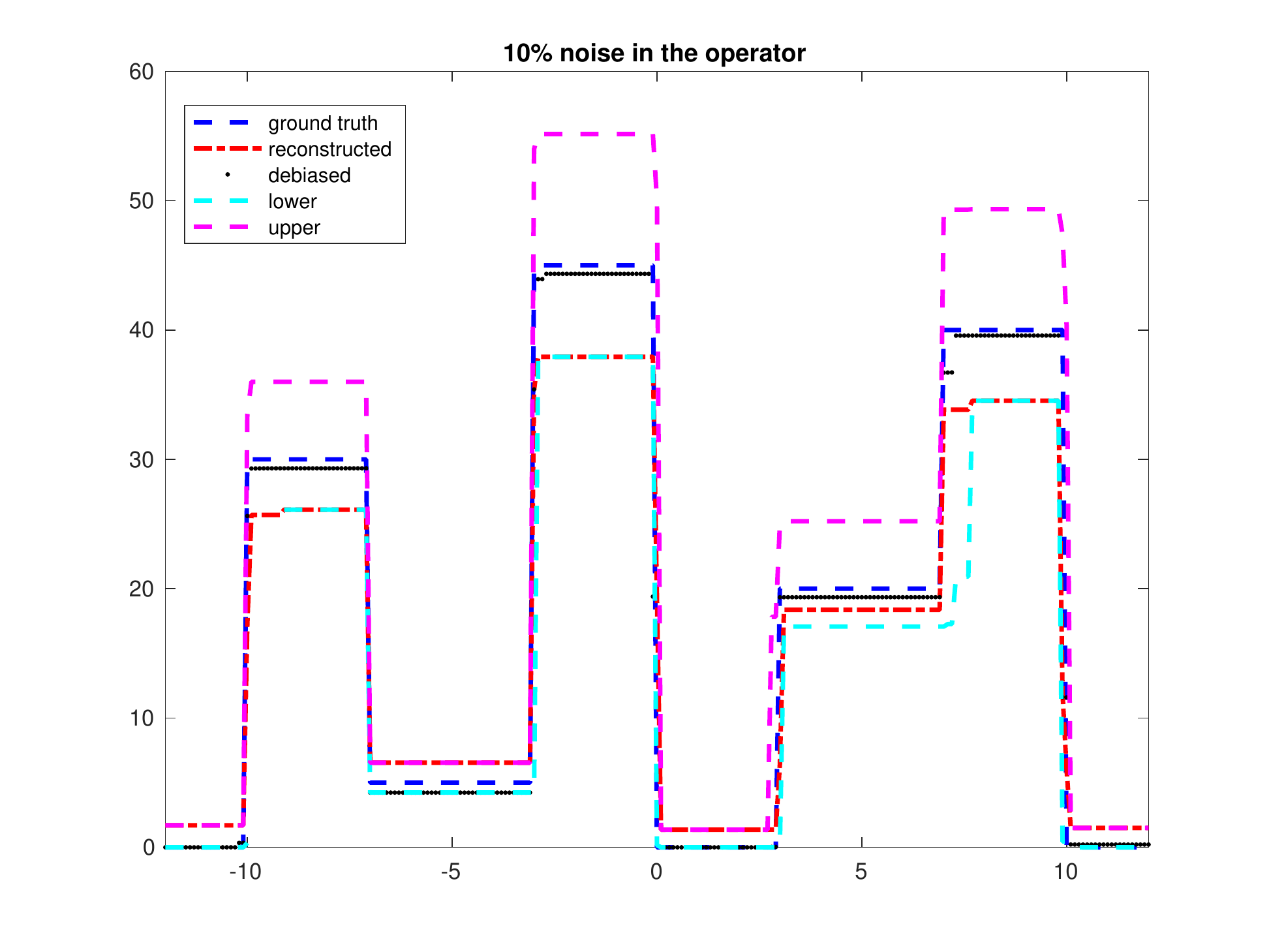}
	\subcaption{Pointwise error bounds with $\reg(u) = \TV(u)+10^{-4}\norm{u}_1$ (cyan and magenta dashed lines).
 $10\%$ noise in the operator.}
	\label{pic-error_bars_10}
    \end{subfigure}
\begin{subfigure}[t]{0.45\textwidth}
            \centering
	\includegraphics[width=\textwidth]{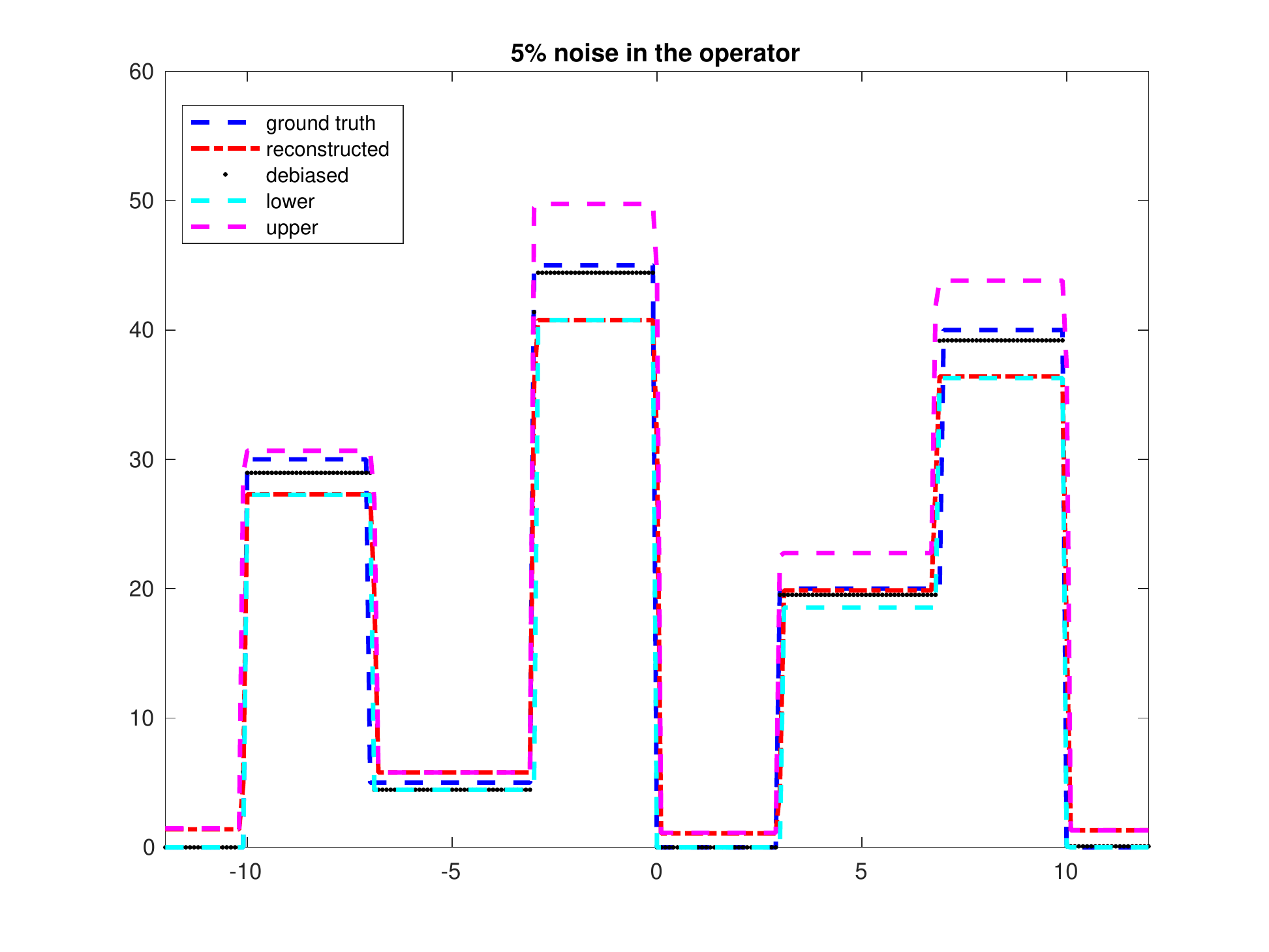}
	\subcaption{Pointwise error bounds with $\reg(u) = \TV(u)+10^{-4}\norm{u}_1$ (cyan and magenta dashed lines).
 $5\%$ noise in the operator.}
	\label{pic-error_bars_5}
    \end{subfigure}
    \caption{The error bounds contain the exact solution $\uquer$, the approximate solution $\un$ and the debiased solution.
 The error bars get closer as the error in the operator gets smaller.
 When the minimiser $\un$ correctly detects the jump set of the exact solution, the error bars follow the structure of the exact solution (\subref{pic-error_bars_5}), otherwise they may contain additional jumps (\subref{pic-error_bars_10}).}
\end{figure}

%%%%%%%%%%%%%%%%%%%%%%%%%%%%%%%%%%%%%%%%%%
%%%%%%%%%%%%%%%%%%%%%%%%%%%%%%%%%%%%%%%%%%
%%%%%%%%%%%%%%%%%%%%%%%%%%%%%%%%%%%%%%%%%%
\section{Conclusions}
The paper presents a theoretical analysis of inverse problems with imperfect forward models in the setting of variational regularisation using one-homogeneous functionals.
 Convergence rates in terms of Bregman distances are obtained that coincide with existing results on inverse problems with exact operators, providing a natural generalisation of the existing theory.

An important aspect of the paper is the study of the interplay between the errors in the data and the operator.
 It turned out that errors in the data should not converge faster than the errors in the operator for the theory to work.
 This result is rather intuitive: there is no need to measure something more precisely than we can predict it.
 Therefore, it might be useful in practice to artificially decrease the quality of the data in order to match that of the operator.
 Along the same lines goes the observation that the data should converge in such a way that there is always a uniform gap between the upper and the lower bound, cf.
 Assumption~\ref{ass_1}.

In the special case of $\TV$-based regularisation we obtained results on the convergence of the level sets of the approximate solutions to those of the ground truth, building on recent work by Chambolle et.al. on problems with exact operators.
 It turned out that, unlike the classical case with an $L^2$ fidelity, $\TV$ does not guarantee the convergence of the level sets, while $\TV + \gamma \norm{\cdot}_1$, $\gamma>0$, does.
 The deciding property in this respect is having a subdifferential at $0$ with non-empty interior, which holds for $\TV + \gamma \norm{\cdot}_1$, $\gamma>0$, but fails for plain $\TV$.

Using our theoretical results we generalised the concept of two-step debiasing to problems with imperfect operators and proposed a method of obtaining asymptotic pointwise lower and upper bounds for the ground truth if it is piecewise constant, demonstrating the performance of both techniques in numerical experiments.

%%%%%%%%%%%%%%%%%%%%%%%%%%%%%%%%%%%%%%%%%%
%%%%%%%%%%%%%%%%%%%%%%%%%%%%%%%%%%%%%%%%%%
%%%%%%%%%%%%%%%%%%%%%%%%%%%%%%%%%%%%%%%%%%
\section*{Acknowledgments}
MB acknowledges the support of ERC via Grant EU FP 7 -- ERC Consolidator Grant 615216 LifeInverse. 
A significant portion of the work presented in this paper was done while YK was a Humboldt Fellow at the University of M\"unster.
 YK acknowledges the support of the Humboldt Foundation in that period.
 Currently YK holds a Newton International Fellowship sponsored by the Royal Society, whose support he also acknowledges.

%%%%%%%%%%%%%%%%%%%%%%%%%%%%%%%%%%%%%%%%%%
\appendix
%%%%%%%%%%%%%%%%%%%%%%%%%%%%%%%%%%%%%%%%%%
\section{Absolutely one-homogeneous regularisation functionals}\label{app:abs_one_homogeneous}
A functional $\reg(\cdot)$ is called absolutely one-homogeneous if
\begin{equation*}
\reg(su) = \abs{s} \reg(u) \quad \forall s \in \R, \,\, \forall u \in L^1,
\end{equation*}
\noindent Absolutely one-homogeneous functionals are widely used in regularisation and play a crucial role, for instance, in non-linear spectral theory~\cite{Burger_Gilboa_Moeller_Eckard_spectral}.

Absolutely one-homogeneous convex functionals have some useful properties, for example, it is obvious that $\reg(0)=0$.
 Some further properties are listed below.
\begin{proposition}\label{prop:one_homog}
Let $\reg(\cdot)$ be a convex absolutely one-homogeneous functional and let $p \in \dJ(u)$.
 Then the following equality holds:
\begin{equation*}
\reg(u) = (p,u).
\end{equation*}
\end{proposition}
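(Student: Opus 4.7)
The plan is to use the subgradient inequality with two carefully chosen test points to sandwich $(p,u)$ between $\reg(u)$ and $\reg(u)$, exploiting positive homogeneity on one side and $\reg(0)=0$ on the other.

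First I would recall that $p\in\dJ(u)$ means $\reg(v)\geq\reg(u)+(p,v-u)$ for every $v\in L^1$. Testing this with $v=2u$ and using $\reg(2u)=2\reg(u)$ (which follows from absolute one-homogeneity with $s=2$) gives
\begin{equation*}
2\reg(u)\geq\reg(u)+(p,u),\qquad\text{hence}\qquad\reg(u)\geq(p,u).
\end{equation*}
Testing the same inequality with $v=0$ and using $\reg(0)=0$ (which follows by taking $s=0$ in the homogeneity identity) gives
\begin{equation*}
0\geq\reg(u)+(p,-u),\qquad\text{hence}\qquad(p,u)\geq\reg(u).
\end{equation*}
Combining the two inequalities yields the claimed equality $\reg(u)=(p,u)$.

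There is essentially no obstacle here; the only tiny subtlety is making sure that $\reg(2u)$ and $\reg(0)$ are finite so that the subgradient inequality is meaningful at those test points. Finiteness of $\reg(u)$ is implicit in $\dJ(u)\neq\emptyset$, and then $\reg(2u)=2\reg(u)<\infty$ while $\reg(0)=0$; hence both test points lie in $\dom\reg$ and the argument goes through unconditionally.
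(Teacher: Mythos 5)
Your proof is correct and is essentially identical to the paper's: the paper also applies the subgradient inequality (phrased as non-negativity of the Bregman distance $D_\reg^p(v,u)$) at the same two test points $v=0$ and $v=2u$, using $\reg(2u)=2\reg(u)$. The finiteness remark you add is a harmless extra precaution.
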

\begin{proof}
Indeed, consider the (generalised) Bregman distance~\cite{Bregman}
\begin{equation*}
D_\reg^p(v,u) = \reg(v) - \reg(u) - (p,v-u) \geq 0 \,\, \forall v.
\end{equation*}

Taking $v=0$, we get that $\reg(u) \leq (p,u)$, while taking $v=2u$ and noting that $\reg(v) = 2\reg(u)$, we get that $\reg(u) \geq (p,u)$, hence $\reg(u) = (p,u)$.
\end{proof}
\begin{remark} The Bregman distance $D_\reg^p(v,u)$ in this case can be written as follows:
\begin{equation*}
D_\reg^p(v,u) = \reg(v)  - (p,v).
\end{equation*}
\end{remark}

\begin{proposition}\label{prop:dJ(0)}
Let $\reg(\cdot)$ be a convex absolutely one-homogeneous functional.
 Then the convex conjugate $\reg^*(\cdot)$ is the characteristic function of the convex set $\dJ(0)$.
\end{proposition}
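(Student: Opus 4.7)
The plan is to compute $\reg^*(p) = \sup_{u \in L^1}\{(p,u) - \reg(u)\}$ directly, showing it equals $0$ when $p \in \dJ(0)$ and $+\infty$ otherwise. The key leverage is absolute one-homogeneity, which allows us to rescale any test element $u$ by an arbitrary positive scalar, so the supremum over $u$ decouples nicely.

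First I would handle the case $p \in \dJ(0)$. By the characterisation of $\dJ(0)$ recalled earlier in the paper, $(p,v) \leq \reg(v)$ for all $v \in L^1$, hence $(p,v) - \reg(v) \leq 0$ for all $v$, giving $\reg^*(p) \leq 0$. Plugging in $v = 0$ (using $\reg(0)=0$, which follows from absolute one-homogeneity with $s=0$) yields $\reg^*(p) \geq 0$, so $\reg^*(p) = 0$.

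Next I would handle the case $p \notin \dJ(0)$. By the same characterisation, there exists $v_0 \in L^1$ such that $(p,v_0) > \reg(v_0)$, i.e. $(p,v_0) - \reg(v_0) =: c > 0$. For any $s > 0$, absolute one-homogeneity gives $\reg(s v_0) = s \reg(v_0)$, so
\begin{equation*}
(p, s v_0) - \reg(s v_0) = s\bigl((p, v_0) - \reg(v_0)\bigr) = s c.
\end{equation*}
Letting $s \to +\infty$ shows $\reg^*(p) = +\infty$.

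Combining the two cases gives $\reg^*(p) = \charf_{\dJ(0)}(p)$, which is the claim. No step here is really an obstacle; the only subtlety is noting that $\reg(0)=0$ (immediate from $\reg(0) = \reg(0 \cdot u) = 0 \cdot \reg(u) = 0$) and that the rescaling argument requires only $s>0$, so absolute one-homogeneity suffices (positive one-homogeneity would already do). Convexity of $\dJ(0)$ is automatic as the intersection of half-spaces $\{p : (p,v) \leq \reg(v)\}$ over $v \in L^1$.
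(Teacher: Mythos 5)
Your proof is correct and follows essentially the same route as the paper's: both rest on testing $u=0$ (via $\reg(0)=0$) to get the supremum $\geq 0$, and on rescaling a violating element by an arbitrary positive scalar to force $\reg^*(p)=+\infty$ whenever $(p,u_0)>\reg(u_0)$ for some $u_0$. The only difference is organisational — you split explicitly into the cases $p\in\dJ(0)$ and $p\notin\dJ(0)$, while the paper argues that finiteness of $\reg^*(p)$ forces $p\in\dJ(0)$ — which is the same argument in contrapositive form.
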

\begin{proof}
By the definition of the convex conjugate, we have that
\begin{equation*}
\reg^*(p) = \sup_u ((p,u) - \reg(u)).
\end{equation*}
\noindent Since $0$ is a feasible element, the supremum is $\geq 0$ for all $p$.
 If $(p,u_0) - \reg(u_0) >0$ for some $u_0$, then, choosing $u=C\cdot u_0$ with an arbitrary $C>0$, we get that the supremum is unbounded and $\reg^*(p) = +\infty$.
 Therefore, for all $p$ s.t. $\reg^*(p) < +\infty$ we have that $(p,u) - \reg(u) \leq 0$ $\forall u$.
  By the definition of a subgradient we get that $p \in \dJ(0)$.
 Since $(p,u) - \reg(u) \leq 0$ $\forall u$ and $\sup_u ((p,u) - \reg(u)) \geq 0$, we conclude that $\reg^*(p) = 0$ whenever $\reg^*(p) < +\infty$, hence the assertion.
\end{proof}

An obvious consequence of the above results is the following
\begin{proposition}\label{char_of_dJ(u)}
For any $u \in \U$, $p \in \dJ(u)$ if and only if $p \in \dJ(0)$ and $\reg(u) = (p,u)$.
\end{proposition}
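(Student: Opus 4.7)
The plan is to prove both implications directly using the two preceding propositions in the appendix, which together already encode all the content we need.

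For the forward implication, I would start with $p \in \dJ(u)$ and invoke Proposition~\ref{prop:one_homog} to get the equality $\reg(u) = (p,u)$ for free. It then remains to verify $p \in \dJ(0)$. Writing out the subgradient inequality at $u$ for an arbitrary $v \in L^1$, I get $\reg(v) \geq \reg(u) + (p, v-u)$, and substituting $\reg(u) = (p,u)$ on the right-hand side collapses this to $\reg(v) \geq (p,v)$, which, in view of Proposition~\ref{prop:dJ(0)} (or directly from $\reg(0) = 0$ combined with the subgradient inequality at $0$), means exactly that $p \in \dJ(0)$.

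For the reverse implication, I would assume $p \in \dJ(0)$ and $\reg(u) = (p,u)$, and show the subgradient inequality at $u$. For arbitrary $v$, Proposition~\ref{prop:dJ(0)} gives $\reg(v) \geq (p,v)$. Rewriting this as $\reg(v) - \reg(u) \geq (p,v) - \reg(u) = (p,v) - (p,u) = (p, v-u)$, which is the definition of $p \in \dJ(u)$.

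There is essentially no obstacle here: the claim is a direct bookkeeping consequence of the two previous propositions, which have done all the real work (the doubling trick $v \mapsto 2u$ in Proposition~\ref{prop:one_homog} and the homogeneity scaling argument in Proposition~\ref{prop:dJ(0)}). The only care needed is to make sure both halves of the equivalence are explicitly written out rather than being waved through as ``obvious''.
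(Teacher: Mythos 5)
Your proof is correct and coincides with the paper's approach: the paper states this proposition as an ``obvious consequence'' of Propositions~\ref{prop:one_homog} and~\ref{prop:dJ(0)} without writing out any details, and your two implications (Proposition~\ref{prop:one_homog} plus the collapsed subgradient inequality for the forward direction, and the inequality $\reg(v) \geq (p,v)$ rewritten as the subgradient inequality at $u$ for the reverse) are exactly the bookkeeping the paper leaves implicit. Nothing is missing.
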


%%%%%%%%%%%%%%%%%%%%%%%%%%%%%%%%%%%%%%%%%%
\section{Properties of $\subdiff \reg(0)$}\label{App:dJ(0)}
In this section we discuss two different classes of regularistaion functionals for which Assumption~\ref{ass_3} is satisfied (or not).
 First let us start with functionals of the form
\begin{equation*}
\reg(u) = \norm{u}_1 + g(u),
\end{equation*}
\noindent where $g \geq 0$ is an absolutely one-homogeneous functional.
 Since $\reg(u) \geq \norm{u}_1$ and convex conjugation is order reversing~\cite{Borwein_Zhu}, we get that
\begin{equation}
\charf_{\dJ(0)} (\cdot) = \dJ^*(\cdot) \leq (\norm{\cdot}_1)^* = \charf_{\norm{\cdot}_\infty \leq 1},
\end{equation}
\noindent where $\charf_X (\cdot)$ is the characteristic function of the set $X$.
 Therefore,  the inclusion $\{p \colon \norm{p}_\infty \leq 1\} \subset \dJ(0)$ holds and hence the condition $0 \in \interior (\dJ(0))$ (understood in $L^\infty$).
 This proves that the regulariser $\reg(\cdot) = \TV(\cdot) + \gamma\norm{\cdot}_1$, $\gamma > 0$, satisfies Assumption~\ref{ass_3}.

Now consider an absolutely one-homogeneous regularisation functional $\reg(\cdot)$ such that $\reg(u) = \reg(u + C)$ for any constant $C$ (for example, $\reg(\cdot) = \TV(\cdot)$).
 By the definition of a convex conjugate, we get the following equality
\begin{equation}\label{dTV(0)}
\reg^*(p) = \sup_u \{(p,u)-\reg(u)\} = \sup_u \{(p,u+C)-\reg(u+C)\} - C(p,\one) = \reg^*(p) - C(p,\one).
\end{equation}

Equality~\eqref{dTV(0)} implies that either $\reg^*(p) = \infty$ or $(p,\one)=0$.
 On the other hand, since $\reg(\cdot)$ is absolutely one-homogeneous, $\reg^*(\cdot) = \charf_{\dJ(0)}(\cdot)$.
 Therefore, $\forall p \in \dJ(0)$ we have that $(p,\one) = 0$, which implies that $\interior (dJ(0)) = \emptyset$.

%%%%%%%%%%%%%%%%%%%%%%%%%%%%%%%%%%%%%%%%%%
\section{Model Manifolds}\label{model_manifolds}
Here we derive model manifolds in $L^1$ related to the $L^1$ norm and the $\TV$-seminorm, as well as their combination, i.e. the $\BV$ norm.

%%%%%%%%%%%%%%%%%%%%%%%%%%%%%%%%%%%%%%%%%%
\paragraph{The $L^1$ norm.} By the absolute one-homogeneity of the $L^1$ norm we can express its subdifferential at $u \in L^1$ by the following expression
\begin{align}\label{eq:subdiff_l1}
 \partial \norm{u}_1 = \{r \in L^\infty ~|~ \norm{u}_1 = (r,u), \norm{v}_1 \geq (r,v) \forall v \in L^1 \}.
\end{align}
From this one can easily derive that 
\begin{align}\label{eq:positivity_integrand}
\norm{r}_\infty \leq 1 \quad \text{and} \quad \int_\Omega \abs{u} - r u \,dx = 0.
\end{align}
Obviously, for any two numbers $\eta,\nu \in \R$ such that $\abs{\nu} \leq 1$ we have that $\abs{\eta} - \nu \eta \geq 0$, with equality holding if and only if either of the numbers is zero or $\nu = \sign(\eta)$. 
Hence the integrand in \eqref{eq:positivity_integrand} has to be nonnegative for a.e. $x \in \Omega$, and the vanishing integral implies that $\abs{u} = r u$ a.e.
Since $\abs{r} \leq 1$, we get that $r = \sign(u)$ on every set of nonzero measure where $u \neq 0$. 

Consider now $r \in \partial \norm{u}_1$, and assume that $\abs{r} < 1$ whenever $u$ vanishes (in the a.e. sense).
We need to compute the related model manifold 
\begin{align*}
\Mcal = \{v \in L^1 ~|~ \norm{v}_1 - (r,v) = \int_\Omega \abs{v} -  r v = 0 \},
\end{align*}
where $D_{\norm{\cdot}_1}^r(v,u) = \norm{v}_1 - (r,v)$ is the (generalised) Bregman distance.
By the same argument as before, the integrand has to be positive a.e. and the integral may only vanish if the integrand vanishes for a.e. $x \in \Omega$.
Under the above assumptions on $r$, whenever $u$ vanishes we have that $|r|<1$ and therefore $v$ also has to vanish, and whenever $u \neq 0$, $v$ can be any positive number with the same sign as $u$. 
In sum, we get the following expression for the model manifold
\begin{align*}
 \Mcal = \{ v \in L^1 ~|~ \supp(v) \subset \supp(u), v = \lambda \sign(u) \text{ whenever } u \neq 0, \lambda \geq 0  \},
\end{align*}
where $\supp$ has to be interpreted in the a.e. sense.

%%%%%%%%%%%%%%%%%%%%%%%%%%%%%%%%%%%%%%%%%%
\paragraph{The $\TV$ seminorm.} The argumentation for $\TV$ is almost identical.
A function $u \in L^1$ lies in $\BV(\Omega)$ if and only if its distributional derivative $\Der u$ is a finite Radon measure~\cite{Ambrosio}, and 
\begin{align*}
 \| \Der u \|_\Radon = \TV (u), 
\end{align*}
where $\| \cdot \|_\Radon$ denotes the Radon norm. 
Furthermore, $\Der u$ possesses a polar decomposition $\Der u = f_{\Der u} |\Der u|$, where $|\Der u |$ denotes the total variation of $u$ and $|f_{\Der u}| = 1$ $|\Der u|$-a.e for the related density function $f_{\Der u}$.
By the chain rule, $p \in \partial \TV(u)$ if and only if $p = \Der^*q$ for some $q$ inb the subdifferential of the $L^1$ norm.
Assuming that $q \in C_0(\Omega,\R^n)$, i.e. $q$ lies in the predual instead of only in the dual space of the space of finite Radon measures, we know by the computations in~\cite{Rasch_PET-MRI} that $q = f_{\Der u}$ $|\Der u|$-a.e. 
In other words, the ``vector field'' $q$ of the subgradient may be decomposed into direction and magnitude, indicating the direction and the hight of jumps across the edges.

We may now rewrite 
\begin{align*}
 D_{\TV}^p(v,u) = D_{\|\cdot\|}^q(\Der v, \Der u) = \int_\Omega 1 - q \cdot f_{\Der v} \, \mathrm{d} |\Der v|.
\end{align*}
This, by the same argument as for $\Lone$, vanishes if and only if either $|\Der v| = 0$ or $f_{\Der v} = q = f_{\Der u}$, implying that the jump set and its direction of $v$ has to be contained in the jump set of $u$. 
In other words, $v$ may only jump where $u$ jumps as well. 
Note that in this case the magnitude of the jump, i.e. $|\Der v|$, can be arbitrary.

It should be mentioned that the (technical) assumption of a continuous subgradient is crucial for this illustration, but not necessarily for the result, meaning that a zero Bregman distance with respect to $\TV$ is still well-defined if $q$ is not continuous. 
However, then it is hard to say anything about the behavior, and we refer to~\cite{Rasch_PET-MRI} for further information.
Moreover, it is possible that $|q| = 1$ on a $|\Der u|$-zero set, i.e. the vector field of the subgradient might be saturated where $u$ is constant. 
In this case, indeed it is possible for $v$ to jump even though $u$ is flat. 
In practice, however, this situation is rarely observed, or has to be enforced by assumption as in the above $\Lone$ case.

%%%%%%%%%%%%%%%%%%%%%%%%%%%%%%%%%%%%%%%%
\paragraph{The $\BV$ norm.} 
Let $J(u) = \TV(u) + \gamma  \|u \|_1$ for $\gamma > 0$.
Since $\norm{\cdot}_1$ is continuous everywhere on $L^1$ we have that~\cite[Theorem 4.4.3 and Lemma 4.3.1]{Borwein_Zhu}
\begin{align*}
 \mathrm{cont} ( \norm{\cdot}_1 ) \cap \mathrm{dom} (\TV) \neq \emptyset,
\end{align*}
and 
\begin{align*}
 \dJ(v) = \partial \TV(v) + \gamma \partial \norm{u}_1.
\end{align*}
Hence for $p \in \dJ(v)$ we have that there exists $q \in \partial \TV(v)$ and $r \in \partial \norm{u}_1$ such that 
$p = q + \gamma r$.
Considering the Bregman distance $D_J^p(u,v)$, we get the following expression
\begin{align*}
 D_J^p(u,v) 
 &= \TV(u) + \gamma \|u\|_1 - (q + \gamma r, u) \\
 &= \TV(u) - (q,u) + \gamma ( \norm{u}_1 - (r,u) )\\
 &= D_{\TV}^q(u,v) + \gamma D_{\norm{\cdot}_1}^r (u,v).
\end{align*}
Since both Bregman distances are non-negative we have that 
\begin{align*}
 D_J^p(u,v) = 0 \quad \Leftrightarrow \quad D_{\TV}^q(u,v) = 0 \quad \text{and} \quad D_{\norm{\cdot}_1}^r (u,v)= 0,
\end{align*}
implying that the manifold with respect to $J$ and $p$ contains all elements sharing the same jump set (including its direction) and the same (signed) support.

\printbibliography

\end{document}